\newtheorem{lemma}{Lemma}[section]
\newtheorem{theorem}[lemma]{Theorem}
\newtheorem{proposition}[lemma]{Proposition}
\theoremstyle{definition}
\newtheorem{definition}[lemma]{Definition}
\newtheorem{remark}[lemma]{Remark}
\newtheorem{notation}[lemma]{Notation}
\newtheorem{example}[lemma]{Example}
\theoremstyle{remark}
\newtheorem*{remark*}{Remark}
\newtheorem*{note*}{Note}
\newcommand{\Frac}{\operatorname{Frac}}
\newcommand{\Spec}{\operatorname{Spec}}
\newcommand{\Hom}{\operatorname{Hom}}
\newcommand{\ord}{\operatorname{ord}}
\newcommand{\val}{\operatorname{val}}
\newcommand{\trdeg}{\operatorname{tr.deg}}
\newcommand{\Cont}{\operatorname{Cont}}
\newcommand{\divi}{\operatorname{div}}
\newcommand{\x}{\overline{x}}
\newcommand{\CC}{\mathbf{k}}
\newcommand{\ZZ}{\mathbb{Z}}
\newcommand{\ZZZ}{\mathbb{Z}_{\geq 0} \cup \{\infty \}}
\newcommand{\CCC}{\mathbb{C}}
\begin{document}
\title{Zero dimensional arc valuations on smooth varieties}
\author{Yogesh More}
\address{Department of Mathematics, University of Michigan,
Ann Arbor, MI 48109, USA}
\email{{\tt yogeshmore80@gmail.com}}

\begin{abstract}
Let  $X$ be a nonsingular variety (with $\dim X \geq 2$) over an algebraically closed field $\CC$ of characteristic zero. Let $\alpha:\Spec \CC[[t]] \to X$ be an arc on $X$, and let $v=\ord_\alpha$ be the valuation given by the order of vanishing along $\alpha$. We describe the maximal irreducible subset $C(v)$ of the arc space of $X$ such that $\val_{C(v)}=v$. We describe $C(v)$ both algebraically, in terms of the sequence of valuation ideals of $v$, and geometrically, in terms of the sequence of infinitely near points associated to $v$. When $X$ is a surface, our construction also applies to any divisorial valuation $v$, and in this case $C(v)$  coincides with the one introduced in \cite[Example 2.5]{mustata}.
\end{abstract}

\thanks{The author is grateful to have been supported by the NSF RTG grant DMS-0502170.}

\maketitle

\section{Introduction}\label{intro}

Let $X$ be a variety over a field $\CC$. A $\CC$-arc $\gamma$ on $X$ is a morphism of $\CC$-schemes $\gamma:\Spec \CC[[t]] \to X$. There is a scheme $X_\infty$, called the arc space of $X$, which parametrizes the arcs on $X$. We refer the reader to \cite[Section 2]{jet-bible} for the construction of $X_\infty$. Denote the closed point of $\Spec \CC[[t]]$ by $o$.

In this paper, I study valuations $\ord_\gamma:\mathcal{O}_{X,\gamma(o)} \to \ZZZ$ given by the order of vanishing along a $\CC$-arc $\gamma:\Spec \CC[[t]] \to X$.  Such valuations are precisely the $\ZZZ$-valued valuations with transcendence degree zero. I associate to $\ord_\gamma$ several different natural subsets of the arc space $X_\infty$. I prove if $\gamma$ is a nonsingular arc, then these subsets associated to $\ord_\gamma$ are equal. Furthermore, I show this subset is irreducible, and the valuation given by the order of vanishing along a general arc of this subset is equal to the original valuation $\ord_\gamma$.  

The motivation for this project was the discovery by Ein, Lazarsfeld, and Musta\c{t}\v{a} \cite[Thm. C]{mustata} that divisorial valuations (equivalently, valuations with transcendence degree $\dim X-1$)  correspond to a special class of subsets of the arc space called cylinders. More specifically, for a divisorial valuation $\val_E$ given by the order of vanishing along a prime divisor $E$ over $X$, there is an irreducible cylinder $C_{\divi}(E) \subseteq X_\infty$ such that for a general arc $\gamma \in C_{\divi}(E)$, we have that the order of vanishing of any rational function $f \in \CCC(X)$ along $\gamma$ equals its order of vanishing along $E$. In symbols, $\ord_\gamma(f) = \val_E(f)$ for all $f \in \CCC(X)$. Conversely, it is shown in \cite[Thm. C]{mustata} that every valuation given by the order of vanishing along a general arc of a cylinder is a divisorial valuation. 

The goal of this paper is to investigate whether other types of valuations, besides divisorial ones, have a similar interpretation via the arc space. We find there is a nice answer for valuations given by the order of vanishing along a nonsingular arc on a nonsingular variety $X$. If $X$ is a surface, all valuations with value group $\mathbb{Z}^r$ (lexicographically ordered) for some $r$ are equivalent to a valuation of this type. One can interpret our results as being complementary to those of Ein et. al. as follows. Both say that valuations are encoded in a natural way as closed subsets of the arc space. We address the case when the transcendence degree is zero, whereas Ein et. al. study the case of valuations with transcendence degree $\dim X-1$.

\subsection{Valuations and subsets of the arc space}

In this section, I begin by explaining the relationship between valuations on a variety $X$ over a field $\CC$ and subsets of the arc space $X_\infty$ of $X$. I then construct several natural subsets of the arc space that one might associate to a valuation. One of the main results of this paper is that for a large class of valuations, these different constructions agree, i.e. they define the same subset of the arc space.

We need to introduce some notation. An arc $\gamma:\Spec \CC[[t]] \to X$ gives a $\CC$-algebra homomorphism $\gamma^*:\widehat{\mathcal{O}}_{X, \gamma(o)} \to \CC[[t]]$, where $o$ denotes the closed point of $\Spec \CC[[t]]$. We define a valuation $\ord_\gamma: \widehat{\mathcal{O}}_{X,\gamma(o)} \to \ZZZ$ by $\ord_\gamma(f)=\ord_t \gamma^*(f)$ for $f \in \widehat{\mathcal{O}}_{X,\gamma(o)}$. If $\gamma^*(f)=0$, we will adopt the convention that $\ord_\gamma(f) = \infty$. 

Given an ideal sheaf $\mathfrak{a} \subseteq \mathcal{O}_X$ on X we set $\ord_{\gamma}(\mathfrak{a})=\displaystyle \min_{f\in \mathfrak{a}_{\gamma(o)}} \ord_\gamma(f)$. For a nonnegative integer $q$, we define the $q$-th order \textit{contact locus} of $\mathfrak{a}$ by \begin{equation}
\Cont^{\geq q}(\mathfrak{a}) = \{\gamma: \Spec \CC[[t]] \to X \mid \ord_{\gamma}(\mathfrak{a})\geq q \}.
\end{equation}

The following definition appears in \cite[p.3]{mustata}, and provided, at least for us, the initial link between valuations and arc spaces:

\begin{definition}\label{intro-valc}
Let $C \subseteq X_\infty$ be an irreducible subset. Assume $C$ is a cylinder (\cite[p.4]{mustata}). Define a valuation $\val_C:\CC(X) \to \mathbb{Z}$ on the function field $\CC(X)$ of $X$ as follows. For $f \in \CC(X)$, set $$\val_C(f)=\ord_\gamma(f)$$ for general $\gamma \in C$. Equivalently, if $\alpha \in C$ is the generic point of $C$, then $\val_C(f)=\ord_\alpha(f)$. (Caveat: $\alpha$ need not be a $\CC$-valued point of $X_\infty$. See Remark \ref{point}.)
\end{definition}
 
It turns out that the condition that $C$ is a cylinder implies that $\val_C(f)$ is always finite. If we drop the assumption that $C$ is a cylinder, then the map $\ord_\alpha$ (where $\alpha$ is the generic point of $C$) is a $\ZZZ$-valued valuation on $\mathcal{O}_{X, \alpha(o)}$.  

We now describe a way to go from valuations centered on $X$ to subsets of the arc space. Following Ishii \cite[Definition 2.8]{ishii3}, we associate to a valuation $v$ a subset $C(v) \subseteq X_\infty$ in the following way.

\begin{definition}\label{c-v}
Let $p \in X$ be a (not necessarily closed) point. Let $v:\widehat{\mathcal{O}}_{X, p} \to \ZZZ$ be a valuation. Define the \textit{maximal arc set} $C(v)$ by $$C(v)=\overline{\{\gamma \in X_\infty \mid \ord_\gamma = v, \ \gamma(o)=p\}} \subseteq X_\infty,$$ where the bar denotes closure in $X_\infty$. 
\end{definition}

If we start with an irreducible subset $C$, we get a valuation $\val_C$ by Definition \ref{intro-valc}. We can then form the subset $C(\val_C)$ as in Definition \ref{c-v}. We have $C \subseteq C(\val_C)$ because $C(\val_C)$ contains the generic point of $C$. In general, we do not have equality.

We can associate another subset of $X_\infty$ to a valuation $v$ on a nonsingular variety $X$ as follows. Let $\{E_q\}_{q \geq 1}$ be the sequence of divisors formed by blowing up successive centers of $v$ (see Definition \ref{divisor-limit}). Following \cite[Example 2.5]{mustata}, to each divisor $E_q$ we associate a cylinder $C_q=C_{\divi}(E_q) \subseteq X_\infty$. Using notation we will explain in Chapter \ref{main}, we will define $C_q=\mu_{q\infty}(\Cont^{\geq 1}(E_q))$. In words, $C_q$ is simply the set of arcs on $X$ whose lift to $X_{q-1}$ (a model of $X$ formed by blowing up $q-1$ successive centers of $v$) has the same center on $X_{q-1}$ as $v$. This collection $\{C_q\}_{q \geq 1}$ of cylinders forms a decreasing nested sequence. We take their interesection, $\bigcap_{q \geq 1}C_q$, to get another subset of $X_\infty$ that is reasonable to associate with $v$.

On the other hand, another way the valuation $v$ can be studied is through its valuation ideals $\mathfrak{a}_q=\{f \in \widehat{\mathcal{O}}_{X,p} \mid v(f) \geq q\}$, where $q$ ranges over the positive integers. The set $\bigcap_{q \geq 1}\Cont^{\geq q}(\mathfrak{a}_q)$ is yet another reasonable set to associate with $v$.

Given an arc $\alpha:\Spec \CC[[t]] \to X$, we have a valuation $v=\ord_\alpha: \mathcal{O}_{X, \alpha(o)} \to \ZZZ$. As described earlier,  $v$ induces a discrete valuation on the function field of the subvariety $Y=\overline{\alpha(\eta)}$, where $\eta$ is the generic point of $\Spec \CC[[t]]$. We have a closed immersion $Y_\infty \subseteq X_\infty$. Denote by $\pi_X:X_\infty \to X$ the canonical morphism that sends an arc $\gamma \in X_\infty$ to its center $\gamma(o) \in X$.
We associate to $\ord_\alpha$ the set 
\begin{equation}\label{set-1}
\pi_X^{-1}(\alpha(o)) \cap Y_\infty \subseteq X_\infty.
\end{equation}

Let $R=\{a \circ h \in X_\infty \mid h:\Spec \CC[[t]] \to \Spec \CC[[t]]\}$. In words, $R$ is the set of $\CC$-arcs that are reparametrizations of $\alpha$. 

The main result of this paper is that for valuations $v=\ord_\alpha$, all five of these closed subsets ($C(v)$, $\bigcap_{q \geq 1}C_q$, $\bigcap_{q \geq 1}\Cont^{\geq q}(\mathfrak{a}_q)$, $\pi_X^{-1}(\alpha(o)) \cap Y_\infty$, $R$) are equal. Furthermore, this subset is irreducible, and the valuation given by the order of vanishing along a general arc of this subset is equal to $v$. 

For convenience, we will assume the arc $\alpha$ we begin with is normalized, that is, the set $\{v(f) \mid f \in \widehat{\mathcal{O}}_{X,p}, \ 0<v(f)<\infty\}$ (where $v=\ord_\alpha$) is non-empty and the greatest common factor of its elements is $1$. Every arc valuation taking some value strictly between $0$ and $\infty$ is a scalar multiple of a normalized valuation.

Also, we restrict ourselves to considering the $\CC$-arcs in the sets described above. We denote by $(X_\infty)_0$ the subset of points of $X_\infty$ with residue field equal to $\CC$. If $D \subseteq X_\infty$, then we set $D_0 = D \cap (X_\infty)_0$.
 
\begin{theorem}\label{intro-main-thm}
Let $\alpha:\Spec \CC[[t]] \to X$ be a normalized arc on a nonsingular variety $X$ ($\dim X \geq 2$) over an algebraically closed field $\CC$ of characteristic zero. Set $v=\ord_\alpha$. Then the following closed subsets of $(X_\infty)_0$ are equal:

$$C(v)_0=(\bigcap_{q \geq 1}C_q)_0 = (\bigcap_{q \geq 1} \Cont^{\geq q}(\mathfrak{a}_q))_0=(\pi_X^{-1}(\alpha(o)) \cap Y_\infty)_0=R.$$
Furthermore, the valuation given by the order of vanishing along a general arc of this subset is equal to $v$.

\end{theorem}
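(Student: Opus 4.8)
The plan is to prove the chain of equalities by a cycle of inclusions, anchoring everything at the most concrete set $R$, the set of reparametrizations of $\alpha$. First I would establish the ``easy'' inclusions that follow directly from the definitions: since a reparametrization $\alpha\circ h$ has the same center $\alpha(o)$ and satisfies $\ord_{\alpha\circ h}(f) = \ord_t(h^*(\alpha^*f))$, which is a positive multiple of $\ord_\alpha(f)$ (equal to it exactly when $h$ is an automorphism, i.e.\ $h^*t = ut$ with $u$ a unit), the set of reparametrizations with $\ord_{\alpha\circ h}=v$ is dense in $R$; hence $R \subseteq C(v)_0$. Similarly any arc $\gamma$ with $\ord_\gamma = v$ and $\gamma(o)=p$ factors through $Y = \overline{\alpha(\eta)}$ (because $\ker\gamma^* \supseteq \{f : v(f)=\infty\}$ is the ideal of $Y$), so $C(v)_0 \subseteq (\pi_X^{-1}(\alpha(o))\cap Y_\infty)_0$. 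And if $\ord_\gamma = v$ then $\gamma \in \Cont^{\geq q}(\mathfrak a_q)$ for every $q$ by the very definition of $\mathfrak a_q$, and $\gamma$'s lift to $X_{q-1}$ has center equal to the center of $v$, giving $C(v)_0 \subseteq (\bigcap_q C_q)_0$ and $C(v)_0 \subseteq (\bigcap_q \Cont^{\geq q}(\mathfrak a_q))_0$.

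The substance of the argument is to close the loop by showing $(\bigcap_q C_q)_0$, $(\bigcap_q \Cont^{\geq q}(\mathfrak a_q))_0$, and $(\pi_X^{-1}(\alpha(o))\cap Y_\infty)_0$ are all contained in $R$ — equivalently, that an arc lying in any of these sets is forced to be a reparametrization of $\alpha$. For the $Y_\infty$ set this is essentially the statement that on the curve $Y$ (or its normalization, a smooth curve since $v$ is a zero-dimensional valuation coming from an arc, so $\trdeg$ considerations pin $Y$ down to dimension one with a single branch) any arc centered at the right point with the right order of vanishing is a reparametrization of the normalization map; this is the classical fact that a power series ring $\CC[[t]]$ has, up to substitution $t\mapsto$ (unit)$\cdot t^k$, a unique map to it realizing a given valuation. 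For the $C_q$ intersection, the key is that as $q\to\infty$ the conditions ``lift to $X_{q-1}$ has center the center of $v$'' cut out, in the limit, exactly the arcs whose lift to the infinitely-near sequence $\{p_q\}$ tracks $v$ at every stage, and by the correspondence between the $E_q$ and the valuation ideals (Definition \ref{divisor-limit} and the discussion of $\mathfrak a_q$) this forces $\ord_\gamma(f) \geq v(f)$ for all $f$; the reverse inequality, $\ord_\gamma(f) \leq v(f)$ for a \emph{general} such $\gamma$, is where one must produce an actual arc achieving equality and argue it is generic. I expect to use the surjectivity/dimension statements for the truncation maps $\mu_{q\infty}$ on cylinders (the fibers of $X_\infty \to X_m$ are irreducible of the expected dimension on the smooth locus, cf.\ \cite{jet-bible}) to control these limits.

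The main obstacle, and the place where the hypotheses ``$\alpha$ nonsingular'' and ``$\characteristic \CC = 0$'' really enter, is showing that the intersection $\bigcap_q C_q$ does not acquire extra components and that its generic arc has $\ord_\gamma$ \emph{equal} to $v$ rather than merely $\geq v$. The nonsingularity of $\alpha$ means $Y$ is a smooth (formal) curve, so its normalization is an isomorphism near $p$ and $\alpha$ itself is, up to reparametrization, a coordinatized embedding $t \mapsto (t, 0, \dots, 0)$ in suitable local coordinates; this is what makes the sequence of infinitely near points $p_q$ finitely determined at each stage and lets one identify $\bigcap_q C_q$ with a nested intersection of irreducible cylinders of eventually-stabilizing codimension, whence irreducible. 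In characteristic zero one avoids wild ramification in the substitution $h$, so the reparametrization group acts as expected and $R$ is genuinely irreducible of the right dimension. I would organize the final write-up as: (i) reduce to local coordinates using nonsingularity; (ii) prove $R = (\pi_X^{-1}(\alpha(o))\cap Y_\infty)_0$ directly; (iii) prove $C(v)_0 = R$ using the uniqueness of valuation-realizing maps to $\CC[[t]]$; (iv) prove the two $\Cont$/$C_q$ intersections equal $C(v)_0$ by the infinitely-near-point bookkeeping plus the cylinder dimension count; (v) deduce irreducibility and the ``general arc'' statement from the explicit description in (i).
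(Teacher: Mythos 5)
Your overall skeleton (a cycle of inclusions anchored at $R$, with a ``unique factorization through $Y$'' fact doing the real work) does match the paper's architecture, but two of your load-bearing steps do not hold under the actual hypotheses. First, the theorem assumes $\alpha$ is \emph{normalized} (Definition \ref{normalized}: the finite nonzero values of $v$ have gcd $1$), not \emph{nonsingular} (Definition \ref{nonsingular}: some element of $\mathfrak{m}_{p_0}$ has value $1$). Your plan leans on the latter: you assert that $Y=\overline{\alpha(\eta)}$ is a smooth formal curve whose normalization is an isomorphism near $p_0$ and that $\alpha$ is, up to reparametrization, $t\mapsto(t,0,\ldots,0)$ in suitable coordinates. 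This already fails for the normalized arc $x_1\mapsto t^2$, $x_2\mapsto t^3$, $x_i\mapsto 0$, which the theorem must cover; as written, your outline proves only the nonsingular case, essentially Proposition \ref{base-case-cyl2} plus the easy inclusions. The general case needs two ingredients you do not supply: the Hamburger--Noether desingularization of Section \ref{zero-valns} (Proposition \ref{val-type}), which the paper uses to reduce the analysis of $\bigcap_q C_q$ and of $\val_C$ to a nonsingular lift on some model $X_r$; and Lemma \ref{key-lemma} (Hochster), which shows that for a \emph{normalized} $\alpha$ the normalization of $\widehat{\mathcal{O}}_{X,p_0}/\ker(\alpha^*)$ maps isomorphically onto $\CC[[t]]$ (Lemmas \ref{hochster-fix1}--\ref{hochster-fix2}, Proposition \ref{hochster-genius}), so that any nontrivial $\CC$-arc $\gamma$ centered at $p_0$ with $\ker(\alpha^*)\subseteq\ker(\gamma^*)$ is $\alpha\circ h$ with $\ord_\gamma=N\ord_\alpha$. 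That lemma is where algebraic closedness and characteristic zero genuinely enter (extracting $r$-th roots of units so that $\gamma^*(x_1)$ may be normalized to $t^r$); your ``avoid wild ramification in $h$'' remark does not substitute for it. Note also that membership in $R$ requires only the kernel containment, not $\ord_\gamma=v$, which is exactly why the containments $\bigl(\bigcap_q\Cont^{\geq q}(\mathfrak{a}_q)\bigr)_0\subseteq R$ and $(\pi_X^{-1}(p_0)\cap Y_\infty)_0\subseteq R$ follow from that lemma without the ``reverse inequality for general $\gamma$'' detour you propose.

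Second, the irreducibility argument via cylinder dimensions would fail. Since $v=\ord_\alpha$ has transcendence degree zero, the centers $p_q$ are closed points forever and $\codim C_q$ grows without bound (in the coordinates of Section \ref{simplified-section}, $C_q$ is cut out by the $(n-1)(q-1)$ equations $x_{i,j}=f_{i,j}(x_{1,1},\ldots,x_{1,j})$); the codimensions do not ``eventually stabilize'' (that happens only in the divisorial situation of the closing Remark), $\bigcap_q C_q$ is not a cylinder, and a nested intersection of irreducible closed sets is not automatically irreducible, so the truncation-map dimension statements cannot control the limit. The paper's substitute is an explicit computation: after reducing to $\mathbb{A}^n$ (Proposition \ref{reduction}), both $\bigcap_q C_q$ and $\bigcap_q\Cont^{\geq q}(\mathfrak{a}_q)$ are shown to equal $V(I)$, where $I$ is the visibly prime ideal generated by all $x_{i,j}-f_{i,j}(x_{1,1},\ldots,x_{1,j})$ (Lemmas \ref{ig-lemma1}--\ref{i-lemma2}), and the explicit arc $x_1\mapsto t$, $x_i\mapsto P_i(t)$ lies in $C$, giving $\val_C\leq v$, while $C\subseteq\Cont^{\geq q}(\mathfrak{a}_q)$ gives the reverse inequality; this is precisely the ``arc achieving equality'' your sketch leaves unproduced. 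A smaller gap: your inclusion $R\subseteq C(v)_0$ also needs an argument (reparametrizations with $\ord_t h=N>1$ satisfy $\ord_{\alpha\circ h}=Nv\neq v$, so you must exhibit them as limits of automorphic reparametrizations, e.g.\ via $h_s^*(t)=st+t^N$).
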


\

\

\begin{remark}
If $X$ is a surface and if $v$ is a divisorial valuation, then $\bigcap_{q>0} C_q$ equals the cylinder $C_r$ associated to $v$ in \cite[Example 2.5]{mustata}, where $r$ is such that $p_r$ is a divisor. 
\end{remark}

\subsection{Outline of the paper}
In Section \ref{background} we recall some basic terminology and results regarding arc spaces. In Section \ref{arcs-valns} we define {arc valuations}, and we compare them with other notions of a valuation. In Section \ref{zero-valns} we show that $\CC$-arc valuations can be desingularized. We will need this result in Section \ref{main}, where we study  $CC$-arc valuations on nonsingular varieties. We first study the case of a nonsingular arc valuation. Later we consider more general arc valuations and prove Theorem \ref{intro-main-thm}. 

\subsection{Acknowledgements}
This paper is part of the author's Ph.D. thesis, and he thanks his thesis advisor Karen E. Smith for suggesting the problem of valuations in arc spaces and for many helpful discussions throughout the course of working on this problem. The author is also indebted to Mircea Musta\c{t}\v{a} for many mathematical ideas, as well as for correcting an earlier draft of this paper. The author is grateful to Mel Hochster for providing an alternative characterization of normalized arcs (Prop. \ref{hochster-genius}) and a key lemma (Lemma \ref{key-lemma}). Conversations with Mattias Jonsson were also useful.

\section{Background on Arc spaces}\label{background}

Let $X$ be a variety over a field $\CC$. Let $\CC \subseteq K$ be a field extension. The arc space $X_{\infty}$ is a scheme over $\CC$ whose $K$-valued points are morphisms $\Spec K[[t]] \rightarrow X$ of $\CC$-schemes, since we have 
\begin{equation}\label{arc-functor}
\Hom(\Spec K, X_\infty)  = \Hom(\Spec K[[t]], X).
\end{equation}
In particular, when $X$ is affine, giving a $K$-valued point of $X_\infty$ is the same thing as giving a homomorphism of $\CC$-algebras $\Gamma(X, \mathcal{O}_X) \to K[[t]]$. 

\begin{definition}\label{defn-of-arc}
Let $\CC \subseteq K$ be a field extension. A \textit{$K$-arc} is a morphism of $\CC$-schemes $\Spec K[[t]] \to X$.
\end{definition}

If $\mu:X' \rightarrow X$ is a morphism of schemes, then we have an induced morphism $\mu_{\infty}:X'_{\infty} \rightarrow X_{\infty}$ sending $\gamma$ to $\mu \circ \gamma$.

Let $\gamma: \Spec K[[t]] \to X$ be a $K$-arc on $X$. Let $x = \gamma(o)$. Given an ideal sheaf $\mathfrak{a}$ on X, we define $\ord_{\gamma}(\mathfrak{a})=\displaystyle \min_{f\in \mathfrak{a}_x} \ord_\gamma(f)$. For a nonnegative integer $p$, we define $\Cont^{\geq p}(\mathfrak{a})$, the \textit{contact locus} of $\mathfrak{a}$ of order $p$, to be the closed subscheme of $X_\infty$ whose $K$-valued points (where $\CC \subseteq K$ is an extension of fields) are \begin{equation}\label{defn-of-contact}
\Cont^{\geq p}(\mathfrak{a})(K) = \{\gamma: \Spec K[[t]] \to X \mid \ord_{\gamma}(\mathfrak{a})\geq p \}.
\end{equation}

If $Z$ is a closed subscheme of $X$ defined by the ideal sheaf $\mathcal{I}$, we  write $\Cont^{\geq p}(Z)$ for $\Cont^{\geq p}(\mathcal{I})$. If a closed subscheme structure on a closed subset of $X$ has not been specified, we implicitly give it the reduced subscheme structure. 

For an ideal $\mathfrak{a}$ of $\widehat{\mathcal{O}}_{X, \gamma(o)}$, we define $\ord_\gamma(\mathfrak{a})=\displaystyle \min_{f\in \mathfrak{a}} \ord_\gamma(f)$. For $x \in X$ and an ideal $\mathfrak{a}$ of $\widehat{\mathcal{O}}_{X, x}$ we have a closed subscheme $\Cont^{\geq p}(\mathfrak{a})$ of $X_\infty$ whose $K$-valued points (where $\CC \subseteq K$ is an extension of fields) are  
\begin{equation}\label{defn-of-contact2}
\Cont^{\geq p}(\mathfrak{a})(K) = \{\gamma: \Spec K[[t]] \to X \mid \gamma(o)=x,\ \ord_{\gamma}(\mathfrak{a})\geq p \}.
\end{equation}


\begin{proposition}\label{closed-point}
Let $X$ be a variety over a field $\CC$. Let $\gamma:\Spec \CC[[t]] \to X$ be a $\CC$-arc. Then $\gamma(o) \in X$ is a closed point of $X$ with residue field $\CC$.
\end{proposition}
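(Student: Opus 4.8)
The plan is to first pin down the residue field of the point $x:=\gamma(o)$ using only that $\gamma$ is a morphism of $\CC$-schemes, and then to deduce closedness of $x$ from the fact that $X$ is of finite type over $\CC$.

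First I would unwind the arc at the closed point. The morphism $\gamma$ induces a local homomorphism of local rings $\mathcal{O}_{X,x}\to \mathcal{O}_{\Spec\CC[[t]],o}=\CC[[t]]$; being local, it carries $\mathfrak{m}_x$ into $(t)$ and hence descends to a homomorphism of residue fields $\kappa(x)\to \CC[[t]]/(t)=\CC$. Since $\gamma$ is a morphism of $\CC$-schemes, this map is compatible with the structure maps from $\CC$: the composite $\CC\to\kappa(x)\to\CC$ equals the identity (it is the residue-field map at $o$ of the structure morphism $\Spec\CC[[t]]\to\Spec\CC$, which is $\CC\to\CC[[t]]/(t)=\CC$, the identity). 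A homomorphism of fields is injective (its kernel is a proper ideal of a field and $\CC\neq 0$); so $\kappa(x)\to\CC$ is injective and becomes the identity after precomposition with $\CC\to\kappa(x)$, hence it is also surjective. Therefore $\kappa(x)\to\CC$ is an isomorphism and $\kappa(x)=\CC$.

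Next I would conclude that $\{x\}$ is closed. Since $X$ is a variety over $\CC$ it is of finite type over $\CC$; choose an affine open cover $\{V_i=\Spec B_i\}$ of $X$. Closedness can be checked on this cover: $\{x\}$ is closed in $X$ if and only if $\{x\}\cap V_i$ is closed in $V_i$ for every $i$. If $x\notin V_i$ there is nothing to check. If $x\in V_i$, say $x$ corresponds to the prime $\mathfrak{q}\subseteq B_i$, then the inclusions $\CC\subseteq B_i/\mathfrak{q}\subseteq \Frac(B_i/\mathfrak{q})=\kappa(x)=\CC$ force $B_i/\mathfrak{q}=\CC$, so $\mathfrak{q}$ is a maximal ideal and $\{x\}$ is closed in $V_i$. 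Hence $\{x\}$ is closed in $X$, with residue field $\CC$. (Alternatively, one can simply invoke the standard fact that a point of a scheme locally of finite type over a field $\CC$ is closed if and only if its residue field is a finite extension of $\CC$.)

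The argument is essentially formal. The only step requiring genuine care is the last one: one must use that $X$ is of finite type over $\CC$ (equivalently, Jacobson), since without this hypothesis a point with residue field $\CC$ need not be closed, and one must reduce the closedness question to the affine situation via the open cover before applying the Nullstellensatz.
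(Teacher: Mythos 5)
Your proof is correct, and its first half is exactly the paper's argument: the local homomorphism $\gamma^*:\mathcal{O}_{X,x}\to\CC[[t]]$ induces $\kappa(x)\hookrightarrow\CC$ compatibly with the structure map, forcing $\kappa(x)=\CC$. For closedness you diverge slightly: the paper simply invokes the standard fact that $\trdeg_\CC\kappa(p)=0$ implies $p$ is closed on a variety, whereas you argue directly in an affine chart that $\CC\subseteq B_i/\mathfrak{q}\subseteq\kappa(x)=\CC$ forces $\mathfrak{q}$ maximal. Your route is more elementary and self-contained, but note that your closing remark is off: your own sandwich argument nowhere uses that $X$ is of finite type or Jacobson, and indeed a point whose residue field equals $\CC$ \emph{via the structure map} is closed in an arbitrary $\CC$-scheme (in any affine open containing it, $B_i/\mathfrak{q}$ surjects onto $\kappa(x)$ already from the image of $\CC$, so $\mathfrak{q}$ is maximal, and closedness is local on an open cover). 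The finite-type hypothesis is what the paper's transcendence-degree argument uses, and it is needed for converse-type statements (e.g.\ that closed points have residue field finite over $\CC$), not for the implication you prove.
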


\begin{proof}
Set $p=\gamma(o)$, and let $\kappa(p)$ denote the residue field of $p \in X$. We have a local $\CC$-algebra homomorphism $\gamma^*:\mathcal{O}_{X,p} \to \CC[[t]]$. Taking the quotient by the maximal ideals, we get a $\CC$-algebra homomorphism $\kappa(p) \hookrightarrow \CC$ that is an isomorphism on $\CC \subseteq \kappa(p).$ Hence $\kappa(p)=\CC$. Since $\trdeg_\CC \kappa(p)=0$, it follows that $p$ is a closed point. 
\end{proof}

\subsection{Points of the arc space}

We next make a couple of remarks about the notion of a \textit{point of the arc space}.

\begin{remark}\label{point}
Let $X$ be a scheme of finite type over a field $\CC$. Let $\alpha \in X_\infty$ be a (not necessarily closed) point of the scheme $X_\infty$. That is, in some open affine patch of $X_\infty$, $\alpha$ corresponds to a prime ideal. Let $\kappa(\alpha)$ denote the residue field at the point $\alpha$ of the scheme $X_\infty$. There is a canonical morphism $\Theta_\alpha:\Spec \kappa(\alpha) \to X_\infty$ induced by the canonical $\CC$-algebra homomorphism $\mathcal{O}_{X_\infty, \alpha} \to \kappa(\alpha)$. By Equation \ref{arc-functor}, the morphism $\Theta_\alpha$ corresponds to a $\kappa(\alpha)$-arc $\theta_\alpha:\Spec \kappa(\alpha)[[t]] \to X$. We will abuse notation and refer to this arc $\theta_\alpha:\Spec \kappa(\alpha)[[t]] \to X$ by $\alpha:\Spec \kappa(\alpha)[[t]] \to X$. That is, given a point $\alpha \in X_\infty$, we have a canonical $\kappa(\alpha)$-arc $\alpha:\Spec \kappa(\alpha)[[t]] \to X$.    
\end{remark}

\begin{remark}
We now examine the reverse of the construction given in Remark \ref{point}. Let $\CC \subseteq K$ be some extension of fields. Given a $K$-arc $\theta:\Spec K[[t]] \to X$, by Equation \ref{arc-functor}, we get a morphism $\Theta:\Spec K \to X_\infty$. The image $\Theta(\mathtt{pt})$ of the only point $\mathtt{pt}$ of $\Spec K$ is a point of $X_\infty$, call it $\alpha$. By Remark \ref{point}, we associate to $\alpha$ a $\kappa(\alpha)$-arc $\Theta_\alpha:\Spec \kappa(\alpha)[[t]] \to X$. Note that $\Theta:\Spec K \to X_\infty$ factors through $\Theta_\alpha:\Spec \kappa(\alpha) \to X_\infty$, since on the level of rings, the $\CC$-algebra map $\Theta^*:\mathcal{O}_{X_\infty, \alpha} \to K$ induces a map $\kappa(\alpha) \to K$. Hence $\theta:\Spec K[[t]] \to X$ factors through $\theta_\alpha:\Spec \kappa(\alpha)[[t]] \to X$. To summarize, $K$-arcs on $X$ correspond to $K$-valued points of $X_\infty$. To each $K$-valued point of $X_\infty$, we can assign a point of $X_\infty$.  If we let $K$ range over all field extensions on $\CC$, this assignment is surjective onto the set of points of $X_\infty$, but it is not injective. To a point $\alpha$ of $X_\infty$, we assign (as described in Remark \ref{point}) a canonical $\kappa(\alpha)$-valued point of $X_\infty$. The point of $X_\infty$ that we assign to this $\kappa(\alpha)$-valued point is $\alpha$.      
\end{remark}

\begin{remark}\label{arcs}
Let $p$ be a closed point of an $n$-dimensional nonsingular variety $X$, and fix generators $x_1, \ldots, x_n$ of the maximal ideal of $\mathcal{O}_{X, p}$. Let $\CC \subseteq K$ be an extension of fields. Giving an arc $\gamma: \Spec K[[t]] \to X$ such that $\gamma \in \Cont^{\geq 1}(p)(K)$ is equivalent to giving a homomorphism of $\CC$-algebras $\widehat{\mathcal{O}}_{X, p}\simeq \CC[[x_1, \ldots, x_n]] \to K[[t]]$ sending each $x_i$ into $(t)K[[t]]$. 
\end{remark}

\begin{definition}\label{zero-arcs}
We say an arc $\gamma: \Spec K[[t]] \to X$ is a \textit{trivial arc} if the maximal ideal of $\widehat{\mathcal{O}}_{X, \gamma(o)}$ equals the kernel of the map $\gamma^*:\widehat{\mathcal{O}}_{X, \gamma(o)} \to K[[t]]$. 
\end{definition}
We have the following observation (whose proof we leave to the reader).

\begin{lemma}\label{lemma2}
Let $X$ be a nonsingular variety. If $\mu: X' \rightarrow X$ is the blowup of a closed point $p \in X$, with exceptional divisor $E$, then:  
\begin{enumerate}
\item \label{lem2-i} Let $\gamma:\Spec K[[t]] \to X$ be an arc such that $\gamma \in \Cont^{\geq 1}(p)$, and suppose $\gamma$ is not the trivial arc. Then there exists a unique arc $\gamma ':\Spec K[[t]] \to X'$ lifting $\gamma$, i.e. $\gamma = \mu \circ \gamma '$. Furthermore, $\gamma '\in \Cont^{\geq 1}(E)$. 

\item \label{lem2-i-2} If $\gamma$ is as in part \ref{lem2-i} and additionally $K=\CC$, then the residue field at $\gamma '(o) \in X'$ equals $\CC$. Furthermore, if $\ord_\gamma(x_1) \leq \ord_\gamma(x_i)$ for all $2 \leq i \leq n$, then there exist $c_i \in \CC$ (for $2 \leq i \leq n$) such that $x_1$ and $\frac{x_i}{x_1}-c_i$ for $2 \leq i\leq n$  are local algebraic coordinates at $\gamma '(o)$.

\item \label{lem2-ii} $\mu_{\infty}(\Cont^{\geq 1}(E))=\Cont^{\geq 1}(p)$.
\end{enumerate}
\end{lemma}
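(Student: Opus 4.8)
\textbf{Proof plan for Lemma \ref{lemma2}.}

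The plan is to work locally in the affine charts of the blowup $\mu:X'\to X$ and translate everything into the language of power-series homomorphisms afforded by Remark \ref{arcs}. Fix local algebraic coordinates $x_1,\dots,x_n$ at $p$, so that $\widehat{\mathcal O}_{X,p}\cong\CC[[x_1,\dots,x_n]]$. An arc $\gamma\in\Cont^{\geq 1}(p)(K)$ is then a $\CC$-algebra map sending each $x_i$ into $(t)K[[t]]$; write $a_i=\gamma^*(x_i)\in tK[[t]]$ and $e_i=\ord_t a_i\in\ZZ_{\geq 1}\cup\{\infty\}$. Saying $\gamma$ is not the trivial arc means the kernel of $\gamma^*$ is strictly smaller than the maximal ideal; since the image of $\gamma^*$ lies in a domain, this forces some $a_i\neq 0$, i.e. some $e_i<\infty$. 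After reindexing assume $e_1=\min_i e_i<\infty$. The $i$-th affine chart of $X'$ has coordinate ring $\CC[[x_1,\dots,x_n]][x_1/x_i,\dots,x_n/x_i]$ (localized/completed appropriately), and the exceptional divisor $E$ is cut out by $x_i$ in that chart. To prove part \ref{lem2-i}, I would check that $\gamma$ lifts precisely into the first chart: the assignment $x_j/x_1\mapsto a_j/a_1$ is well-defined since $e_1\leq e_j$ guarantees $a_j/a_1\in K[[t]]$ (here one uses that $K[[t]]$ is a domain and $a_1\neq 0$). This gives a $\CC$-algebra map $\gamma'^*$ from the coordinate ring of the first chart to $K[[t]]$, hence an arc $\gamma':\Spec K[[t]]\to X'$ with $\mu\circ\gamma'=\gamma$; since $\gamma'^*(x_1)=a_1$ has order $e_1\geq 1$, we get $\gamma'\in\Cont^{\geq 1}(E)$. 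Uniqueness follows because $\mu$ is an isomorphism away from $E$: the generic point of $\Spec K[[t]]$ maps to $X\setminus\{p\}$ (as some $a_i\neq 0$), so $\gamma'$ is determined on a dense open, hence everywhere by separatedness of $X'$.

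For part \ref{lem2-i-2}, take $K=\CC$. The center $\gamma'(o)\in X'$ lies on $E$ in the first chart, and its residue field is a quotient of $\CC[[x_1,\dots,x_n]][x_2/x_1,\dots,x_n/x_1]$ by a maximal ideal containing $x_1$; by Proposition \ref{closed-point} applied to the arc $\gamma'$ on the variety $X'$, this residue field is $\CC$. Concretely, the center is the point where $x_1=0$ and $x_j/x_1=c_j$, where $c_j:=(a_j/a_1)(0)\in\CC$ is the constant term of the power series $a_j/a_1$ (which is a well-defined element of $\CC$ since $e_1\leq e_j$). Then $x_1$ together with the elements $x_j/x_1-c_j$ for $2\leq j\leq n$ generate the maximal ideal of $\mathcal O_{X',\gamma'(o)}$ and form a regular system of parameters there, since $X'$ is nonsingular of dimension $n$ at that point and these $n$ elements cut it out; this is exactly the asserted system of local algebraic coordinates. (One should note the hypothesis $\ord_\gamma(x_1)\leq\ord_\gamma(x_i)$ is the same normalization $e_1=\min e_i$ used above, so nothing new is needed.)

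For part \ref{lem2-ii}, the inclusion $\mu_\infty(\Cont^{\geq 1}(E))\subseteq\Cont^{\geq 1}(p)$ is immediate: if $\delta\in\Cont^{\geq 1}(E)$ then $\ord_\delta(x_i)=\ord_\delta(\mu^*x_i)\geq\ord_\delta(E)\cdot(\text{mult})\geq 1$ for each $i$ once one observes $\mu^*x_i$ vanishes on $E$, so $\mu\circ\delta\in\Cont^{\geq 1}(p)$. For the reverse inclusion, given $\gamma\in\Cont^{\geq 1}(p)$: if $\gamma$ is non-trivial, part \ref{lem2-i} produces a lift $\gamma'\in\Cont^{\geq 1}(E)$ with $\mu_\infty(\gamma')=\gamma$; if $\gamma$ is the trivial arc, it is the constant arc at $p$, which lifts to the constant arc at any chosen point of $E$, and that constant arc lies in $\Cont^{\geq 1}(E)$ trivially. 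Hence every point of $\Cont^{\geq 1}(p)$ is in the image. One subtlety worth spelling out is that $\Cont^{\geq 1}(p)$ and $\Cont^{\geq 1}(E)$ are schemes, not just sets of $\CC$-points, so strictly speaking the equality should be checked functorially (on $K$-points for all $\CC\subseteq K$) — but the argument above was written for general $K$ precisely so that it applies verbatim, and the scheme-theoretic image of the quasi-compact $\mu_\infty$ then agrees with the set-theoretic one.

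\textbf{Main obstacle.} The only genuinely delicate point is the uniqueness and chart-independence in part \ref{lem2-i}: one must argue that a non-trivial arc lands in exactly one affine chart of the blowup (the one indexed by a coordinate of minimal order) and that the lift there is forced. The cleanest way around this is the valuative-criterion / separatedness argument sketched above — the lift is unique on the punctured disc $\Spec\Frac(K[[t]])$ because $\mu$ is an isomorphism over $X\setminus\{p\}$ and the non-triviality hypothesis ensures the generic point maps outside $p$ — so uniqueness on all of $\Spec K[[t]]$ follows from separatedness of $X'$, and existence of the lift over the whole disc (not just its generic point) follows from the valuative criterion of properness for $\mu$. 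Everything else is a routine unwinding of the local equations of a point blowup.
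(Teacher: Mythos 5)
Your proof is correct; note that the paper itself gives no argument for this lemma (it is explicitly ``left to the reader''), so there is no written proof to compare against. Your route --- lifting into the blowup chart indexed by a coordinate of minimal order by extending $\gamma^*$ through the localization at $x_1$ (valid since $a_1\neq 0$ and $K[[t]]$ is a domain), getting uniqueness from separatedness of $X'$ together with the fact that a non-trivial arc sends the generic point of $\Spec K[[t]]$ into the locus where $\mu$ is an isomorphism, deducing the residue-field claim in part (2) from Proposition \ref{closed-point} applied to $\gamma'$ on $X'$, and checking part (3) functorially on $K$-points --- is exactly the standard argument the paper implicitly relies on, and its level of detail matches the paper's own treatment of the analogous Lemma \ref{local-coords}; the only place to be slightly careful in a final write-up is to phrase the chart computation with the actual coordinate ring $\mathcal{O}(U)[x_2/x_1,\dots,x_n/x_1]$ of the blowup of a neighborhood $U$ of $p$ (rather than the completed ring), and to justify that $x_1,\ x_i/x_1-c_i$ generate the maximal ideal at $\gamma'(o)$, e.g.\ by observing that modulo these elements all $x_i$ vanish, so the quotient is a quotient of the local ring of the exceptional $\mathbb{P}^{n-1}$ at $\gamma'(o)$ and hence equals $\CC$.
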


We now describe a geometric construction, called the sequence of centers of a valuation, that is useful in studying valuations, especially those on smooth surfaces. We give the definition only for valuations given by the order of vanishing along an arc $\gamma:\Spec \CC[[t]] \to X$, as this is the case we will be interested in. For a general valuation, the definition is similar \cite[Exer. II.4.12]{hartshorne}.

\begin{definition}[Sequences of centers of an arc valuation]\label{divisor-limit}
Let $X$ be a nonsingular variety over a field $\CC$ and let  $\gamma:\Spec \CC[[t]] \to X$ be an arc that is not a zero arc. The point $p_0:=\gamma(o)$ is called the \textit{center} of $v$ on $X$. We blow-up $p_0$ to get a model $X_1$ with exceptional divisor $E_1$. By Lemma \ref{lemma2}  the arc $\gamma$ has a unique lift to an arc $\gamma_1:\Spec \CC[[t]] \to X_1$. Let $p_1$ be the closed point $\gamma_1(o)$. We define inductively a sequence of closed points $p_i$ and exceptional divisors $E_i$ on models $X_i$, and lifts $\gamma_i:\Spec\CC[[t]] \to X_i$ of $\gamma$ as follows. We blow-up $p_{i-1} \in X_{i-1}$ to get a model $X_i$, and let $E_i$ be the exceptional divisor of this blowup. Let $\gamma_i:\Spec \CC[[t]] \to X_i$ be the lift of $\gamma_{i-1}:\Spec \CC[[t]] \to X_{i-1}$. We denote by $p_i$ the closed point $\gamma_i(o)$, and by $\mu_i:X_i \to X$ the composition of the first $i$ blowups. We call $\{p_i\}_{i \geq 0}$ the \textit{sequence of centers} of $\gamma$. 
\end{definition}

\section{Arc valuations}\label{arcs-valns}
In this section, we begin the study of arc valuations, which are the central object of this paper. We begin by defining arc valuations, normalized arc valuations, and nonsingular arc valuations.

\begin{definition}[Arc valuations]\label{defn-arc-valn}
Let $X$ be a variety over a field $\CC$, and let $p \in X$ be a (not necessarily closed) point. Let $\CC \subseteq K$ be an extension of fields. A  \textit{K-arc valuation} $v$ on $X$ centered at $p$ is a map $v:\mathcal{O}_{X,p} \to \ZZZ$ such that there exists an arc $\gamma:\Spec K[[t]] \to X$  with $\gamma(o)=p$ (where $o$ is the closed point of $\Spec K[[t]]$) and $v(f)=\ord_\gamma(f)$  for $f \in \mathcal{O}_{X,p}$. Since $\ord_\gamma$ extends uniquely to $\widehat{\mathcal{O}}_{X, p}$ (the completion of $\mathcal{O}_{X, p}$ at its maximal ideal), we can extend $v$ to $\widehat{\mathcal{O}}_{X, p}$ as well. This extension does not depend on the choice of arcs $\gamma$ satisfying $v=\ord_\gamma$ on $\mathcal{O}_{X, p}$. Therefore we will also regard arc valuations as maps $v:\widehat{\mathcal{O}}_{X,p} \to \ZZZ$ without additional comment.
\end{definition}

It is shown in \cite[Proposition 2.11]{ishii} that every divisorial valuation is an arc valuation. 

\begin{definition}[Normalized arc valuations]\label{normalized}
We call an arc valuation $v$ centered at a point $p \in X$ \textit{normalized} if the set $\{v(f) \mid f \in \widehat{\mathcal{O}}_{X,p}, \ 0<v(f)<\infty\}$ is non-empty and the greatest common factor of its elements is $1$. Every arc valuation taking some value strictly between $0$ and $\infty$ is a scalar multiple of a normalized valuation. We say an arc $\gamma: \Spec K[[t]] \to X$ is \textit{normalized} if $\ord_\gamma: \widehat{\mathcal{O}}_{X, \gamma(o)} \to \ZZZ$ is a normalized arc valuation. 
\end{definition}

\begin{notation}\label{A-algebra}
Let $X$ be a nonsingular variety over an algebraically closed field $\CC$ of characteristic zero. Let $\gamma:\Spec \CC[[t]] \to X$ be an arc centered at $p \in X$ and let $\gamma^*:\widehat{\mathcal{O}}_{X, p} \to \CC[[t]]$ be the corresponding $\CC$-algebra morphism. Assume $\gamma$ is not a zero arc. Define a $\CC$-algebra $A_\gamma$ by $A_\gamma = \widehat{\mathcal{O}}_{X, p}/\ker(\gamma^*)$. Let $\tilde{A_\gamma}$ be the normalization of $A_\gamma$. Then $\gamma^*$ induces an injective $\CC$-algebra map $\overline{\gamma}^*:A_\gamma \hookrightarrow \CC[[t]]$ which extends to an injective $\CC$-algebra homomorphism $\overline{\gamma}^*:\tilde{A_\gamma} \hookrightarrow \CC[[t]]$. We denote by $\ord_{\overline{\gamma}}$ the composition $\ord_t \circ \overline{\gamma}^*:\tilde{A_\gamma} \to \ZZ_{\geq 0}$. Note that for $f \in \widehat{\mathcal{O}}_{X,p} \setminus \ \ker(\gamma^*)$, we have $\ord_\gamma(f)=\ord_{\overline{\gamma}}(\overline{f})$. We will show in Lemma \ref{hochster-fix2} that there exists $\phi \in \CC[[t]]$ such that the image of $\overline{\gamma}^*:\tilde{A_\gamma} \hookrightarrow \CC[[t]]$ equals $\CC[[\phi]] \subseteq \CC[[t]]$. 

\end{notation}

\begin{lemma}\label{hochster-fix1}
We use the setup described in Notation \ref{A-algebra}. The ring homomorphism $\overline{\gamma}^*:A_\gamma \hookrightarrow \CC[[t]]$ gives $\CC[[t]]$ the structure of a finite $A_\gamma$-module. In particular, $A_\gamma$ has Krull dimension one.
\end{lemma}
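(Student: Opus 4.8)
The plan is to exhibit $\CC[[t]]$ as a module-finite extension of $A_\gamma$, from which $\dim A_\gamma = \dim \CC[[t]] = 1$ follows immediately by the dimension formula for integral extensions. First I would set $n = \dim X$ and fix coordinates $x_1, \dots, x_n$ for $\widehat{\mathcal{O}}_{X,p} \cong \CC[[x_1, \dots, x_n]]$, so that $A_\gamma$ is generated as a $\CC$-algebra by the images $\overline{x}_i = \overline{\gamma}^*(x_i) = \gamma^*(x_i) \in \CC[[t]]$. Since $\gamma$ is not a zero arc, at least one of the series $\gamma^*(x_i)$ is nonzero; reindexing, say $\gamma^*(x_1) \neq 0$, and write $m = \ord_t \gamma^*(x_1) < \infty$. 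The key point is that $\CC[[t]]$, viewed via $\overline{\gamma}^*$ as an $A_\gamma$-algebra, is integral over the subring $\CC[[\overline{x}_1]]$: indeed $\overline{x}_1 = u \cdot t^m$ for a unit $u \in \CC[[t]]$, so $t^m \in \CC[[\overline{x}_1]] \cdot (\CC[[t]])^\times$, and in fact $t$ itself satisfies a monic polynomial over $\CC[[\overline{x}_1]]$ — concretely, because $\CC$ is algebraically closed of characteristic zero we may after an analytic change of parameter (or by the Weierstrass preparation theorem applied to $\overline{x}_1 - u(t)t^m$ as a function of $t$) take $\overline{x}_1 = t^m$ exactly, whence $t$ is a root of $Z^m - \overline{x}_1$.

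From here the argument is: $\CC[[t]]$ is generated as a $\CC[[\overline{x}_1]]$-module by $1, t, \dots, t^{m-1}$ (standard: any $f \in \CC[[t]]$ decomposes uniquely as $\sum_{j=0}^{m-1} g_j(t^m) t^j$ with $g_j \in \CC[[s]]$). Since $\CC[[\overline{x}_1]] \subseteq A_\gamma \subseteq \CC[[t]]$, the extension $A_\gamma \subseteq \CC[[t]]$ is also module-finite (a fortiori generated by $1, t, \dots, t^{m-1}$ as an $A_\gamma$-module), which is the first assertion. For the Krull dimension statement, integral extensions preserve dimension, so $\dim A_\gamma = \dim \CC[[t]] = 1$; alternatively one can note $A_\gamma$ is a one-dimensional complete local Noetherian domain directly, being a subring of $\CC[[t]]$ with the same fraction field up to finite extension.

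I expect the main technical wrinkle to be the normalization "$\overline{x}_1 = t^m$ exactly." Strictly speaking one has $\overline{x}_1 = u(t) t^m$ with $u(0) \neq 0$; since $\CC$ is algebraically closed of characteristic zero, $u$ has an $m$-th root $u^{1/m} \in \CC[[t]]^\times$, so setting $s = u^{1/m} t$ gives $\CC[[s]] = \CC[[t]]$ and $\overline{x}_1 = s^m$. This reparametrization is harmless because it does not change the subring $\CC[[t]]$ or the ring $A_\gamma$; it only simplifies the bookkeeping. If one prefers to avoid it, one can instead invoke Weierstrass preparation: $\overline{x}_1 \in \CC[[t]]$ regarded inside $\CC[[\overline{x}_1]][[t]]$ — or more cleanly, observe that $\CC[[t]]$ is finite over $\CC[[t^m]]$ and $\overline{x}_1 = (\text{unit}) t^m$ forces $\CC[[t^m]] \subseteq \CC[[\overline{x}_1]][\text{unit inverses}]$, so finiteness over $\CC[[\overline{x}_1]]$ follows. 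Either way the substance is the elementary fact that a power series with a leading term of order $m$ generates a finite-index subring, and everything else is formal.
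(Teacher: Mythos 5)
Your proof is correct and follows essentially the same route as the paper: extract an $m$-th root of the unit (using that $\CC$ is algebraically closed of characteristic zero) to reduce to $\gamma^*(x_1)=t^m$, then use the decomposition $f=\sum_{j=0}^{m-1}g_j(t^m)t^j$ to get module-finiteness, and conclude $\dim A_\gamma=1$ because integral extensions preserve dimension. The only (harmless) cosmetic difference is that you route the finiteness through the intermediate subring $\CC[[\overline{x}_1]]\subseteq A_\gamma$, whereas the paper exhibits the generators $1,t,\ldots,t^{m-1}$ over $A_\gamma$ directly.
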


\begin{proof}
Choose local coordinates $x_1, \ldots, x_n$ at $p$ such that $\gamma^*(x_1) \neq 0$. We have $\gamma^*(x_1)=t^ru$  for some positive integer $r$ and unit $u \in \CC[[t]]$. Since $\CC$ is algebraically closed and has characteristic zero, there exists a unit $v \in \CC[[t]]$ such that $v^r = u$. Indeed, we may use the binomial series and take $v=u^{1/r}$. To be precise, write $u=u_0(1 + u_1(t))$, with $u_1(t) \in (t)\CC[[t]]$ and $u_0 \neq 0$. Then $u^{1/r}=u_0^{1/r}(1+u_1(t))^{1/r}=u_0^{1/r}(1+ \sum_{i \geq 1} \binom{1/r}{i}u_1^i)$, where $u_0^{1/r}$ denotes any root of $X^r - u_0=0$.

Let $\tau:\CC[[t]] \to \CC[[t]]$ be the $\CC$-algebra automorphism of $\CC[[t]]$  defined by $\tau(t)=tv^{-1}$. Then $\tau(\gamma^*(x_1))=\tau(t^ru)=t^rv^{-r}u=t^r$. Therefore, we may assume without loss of generality that $\gamma^*(x_1)=t^r$. 

I claim $1, t, \ldots, t^{r-1}$ generate $\CC[[t]]$ as a module over $A_\gamma$. Let $f(t)=\sum_{i \geq 0}f_it^i \in \CC[[t]]$ with $f_i \in \CC$ for all $i \geq 0$. For $0\leq j \leq r$, define a power series $p_j(X) \in \CC[[X]]$ by  $p_j(X)=\sum_{i\geq 0} f_{j+ ir}X^i$. Then 
\begin{align*}
\sum_{j=0}^{j=r-1} \gamma^*(p_j(x_1))t^j&=\sum_{j=0}^{j=r-1} p_j(\gamma^*(x_1))t^j \\
&=\sum_{j=0}^{j=r-1} p_j(t^r)t^j \\
&=\sum_{j=0}^{j=r-1}\sum_{i\geq 0}f_{j+ir}t^{j+ir} = \sum_{i \geq 0} f_it^i = f(t). 
\end{align*}

Hence $1, t, \ldots, t^{r-1}$ generate $\CC[[t]]$ considered as a module over $A_\gamma$ via the ring homomorphism $\overline{\gamma}^*:A_\gamma \hookrightarrow \CC[[t]]$. Since $\CC[[t]]$ has dimension one and module finite ring extensions preserve dimension (\cite[Proposition 9.2]{E}), we conclude $A_\gamma$ has dimension one.
\end{proof}



\begin{lemma}\label{hochster-fix2}
We continue using the setup and hypotheses of Lemma \ref{hochster-fix1}. There exists $\phi \in \CC[[t]]$ such that the image of $\overline{\gamma}^*:\tilde{A_\gamma} \hookrightarrow \CC[[t]]$ equals $\CC[[\phi]] \subseteq \CC[[t]]$.
\end{lemma}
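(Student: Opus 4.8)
The plan is to show that the subring $B:=\overline{\gamma}^*(\tilde{A_\gamma}) \subseteq \CC[[t]]$ is of the form $\CC[[\phi]]$ for a suitable $\phi$. First I would record what we already know: by Lemma \ref{hochster-fix1}, $\CC[[t]]$ is a finite $A_\gamma$-module, hence also a finite $\tilde{A_\gamma}$-module, so $B$ is a one-dimensional Noetherian local domain (it is local because it sits inside the local ring $\CC[[t]]$ and the inclusion is integral, so the maximal ideal of $\CC[[t]]$ contracts to the unique maximal ideal of $B$) whose integral closure in its fraction field is $B$ itself, i.e. $B$ is a normal, one-dimensional Noetherian local domain. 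A normal Noetherian local domain of dimension one is a DVR. So the first key step is: \emph{$B$ is a DVR with fraction field contained in $\CC((t))$.}

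Next I would identify the residue field of $B$: since $B \subseteq \CC[[t]]$ and $B$ contains $\CC$, and $B/\mathfrak{m}_B \hookrightarrow \CC[[t]]/\mathfrak{m}_{\CC[[t]]} = \CC$ with $\CC$ mapping isomorphically, the residue field of $B$ is $\CC$. A complete (or even just Henselian) one-dimensional regular local ring with residue field $\CC$, containing $\CC$, and of equicharacteristic zero is isomorphic to $\CC[[s]]$ by Cohen's structure theorem; but I should be careful about completeness. In fact $B$ is already complete: it is a submodule of the $\mathfrak{m}_{\CC[[t]]}$-adically complete module $\CC[[t]]$, finitely generated over the complete local ring $A_\gamma$ — actually the cleanest route is that $B$ is a finite module over $A_\gamma$, and $A_\gamma = \widehat{\mathcal{O}}_{X,p}/\ker(\gamma^*)$ is a quotient of a complete local ring, hence complete, so the finite $A_\gamma$-module $B$ is complete and therefore $B$, being local, is a complete local ring. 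Thus $B \cong \CC[[s]]$ as a $\CC$-algebra. The second key step is then: \emph{choose $\phi$ to be the image in $\CC[[t]] $ of a uniformizer $s$ of $B$ under the abstract isomorphism, equivalently pick any $\phi \in B$ with $\ord_t \phi$ minimal among nonzero elements of $B$}; then $B = \CC[[\phi]]$.

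To finish, I would verify $B = \CC[[\phi]]$ directly: $\CC[[\phi]] \subseteq B$ is clear since $B$ is a complete $\CC$-subalgebra containing $\phi$. For the reverse inclusion, let $d = \ord_t \phi$; every nonzero element of $B$ has $t$-order a nonnegative integer multiple of $d$ (because $\ord_t$ restricted to $B \setminus \{0\}$ is a valuation with value group generated by its minimal positive value $d$, using that $B$ is a DVR so the value group is $\ZZ$ and the minimal positive value realized inside $\CC[[t]]$ is $d$). Given $b \in B$ nonzero, one successively subtracts $\CC$-multiples of powers $\phi^i$ to kill the lowest-order term — exactly the division argument used in the proof of Lemma \ref{hochster-fix1} with $t^r$ there playing the role of $\phi$ here — and by completeness of $B$ the resulting series $\sum c_i \phi^i$ converges in $B$ to $b$, so $b \in \CC[[\phi]]$.

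The main obstacle, and the only point requiring care, is the justification that $B$ is a \emph{complete} local ring (so that Cohen's theorem applies and, more concretely, so that the division-algorithm series converges inside $B$ rather than merely inside $\CC[[t]]$); the normality of $B$ is automatic since $B = \tilde{A_\gamma}$ is by construction integrally closed in $\Frac(A_\gamma) = \Frac(B)$, and one-dimensionality transfers from $A_\gamma$ along the finite extension. I would handle completeness via the finiteness of $B$ as an $A_\gamma$-module together with completeness of $A_\gamma$, as sketched above.
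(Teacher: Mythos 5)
Your proof is correct and follows essentially the same route as the paper: both show that $\tilde{A_\gamma}$ is a complete, normal, one-dimensional local domain with residue field $\CC$, and conclude it is a power series ring in one variable, realized inside $\CC[[t]]$ as $\CC[[\phi]]$ for $\phi$ a uniformizer. The only differences are cosmetic: you get module-finiteness of $\tilde{A_\gamma}$ over $A_\gamma$ from its being an $A_\gamma$-submodule of the finite module $\CC[[t]]$ (the paper instead cites excellence of $A_\gamma$), and you spell out the residue-field and uniformizer bookkeeping that the paper delegates to its citation of Matsumura (where your ``$\ord_t\phi$ minimal among nonzero elements'' should of course read ``minimal positive $t$-order''); neither changes the argument.
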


\begin{proof}
Since an integral extension of rings preserves dimension (\cite[Proposition 9.2]{E}), we have that $\tilde{A}_\gamma$ has dimension one.
Since $\CC[[t]]$ is normal (in fact it is a DVR), the local $\CC$-algebra map $\overline{\gamma}^*:A_\gamma \hookrightarrow \CC[[t]]$ extends to a $\CC$-algebra map $\overline{\gamma}^*:\tilde{A_\gamma} \hookrightarrow \CC[[t]]$.  

I claim the ring $\tilde{A_\gamma}$ is a complete local domain. The local ring $A_\gamma$ is complete since it is the image of a complete local ring. The normalization of an excellent ring $A$ (in our case, the complete local domain $A_\gamma$) is module finite over $A$ \cite[p.259]{M}. A module finite domain over a complete local domain is local and complete (apply \cite[Corollary 7.6]{E} and use the domain hypothesis to conclude there is only one maximal ideal). Hence $\tilde{A_\gamma}$ is a complete local domain.
  
Since $\tilde{A_\gamma}$ is a complete normal 1-dimensional local domain containing the field $\CC$, it is isomorphic to a power series over $\CC$ in one variable \cite[Cor. 2, p.206]{M}. That is, there exists $\phi \in \CC[[t]]$ such that the image of $\overline{\gamma}^*:\tilde{A_\gamma} \hookrightarrow \CC[[t]]$ equals $\CC[[\phi]] \subseteq \CC[[t]]$. 
\end{proof}

The following result was pointed out to me by Mel Hochster.

\begin{proposition}\label{hochster-genius}
Assume the setup of  Notation \ref{A-algebra} and let $\phi$ be as in Lemma \ref{hochster-fix2}. Let $d$ be the greatest common divisor of the elements of the non-empty set $\{\ord_\gamma(f) \mid f \in \widehat{\mathcal{O}}_{X,p}, \ 0<\ord_\gamma(f)<\infty\}$. Then  $d=\ord_t(\phi)$. In particular, $\ord_\gamma$ is a normalized arc valuation if and only if $\ord_t(\phi)=1$.
\end{proposition}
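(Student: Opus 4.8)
The plan is to translate the statement into a statement about the value semigroup of the subring $B:=\overline{\gamma}^*(A_\gamma)\subseteq \CC[[t]]$. By Notation \ref{A-algebra} the map $\overline{\gamma}^*$ carries $A_\gamma$ isomorphically onto $B$ and $\tilde{A_\gamma}$ isomorphically onto $\CC[[\phi]]$, so $\CC[[\phi]]$ is the normalization $\tilde B$ of $B$ inside $\Frac(B)$. For $f\in\widehat{\mathcal O}_{X,p}\setminus\ker(\gamma^*)$ one has $\ord_\gamma(f)=\ord_t(\overline{\gamma}^*(\overline f))$, an element of $\ord_t(B\setminus\{0\})$; conversely every nonzero element of $B$ arises this way, and $1\in B$ contributes the value $0$. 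Hence, writing $S:=\{\ord_t(b)\mid b\in B\setminus\{0\}\}\subseteq\ZZ_{\geq 0}$ for the value semigroup, the set in the statement is exactly $S\setminus\{0\}$, so $d=\gcd(S\setminus\{0\})$ and the goal is to prove $\gcd(S\setminus\{0\})=\ord_t(\phi)$. Put $m:=\ord_t(\phi)$.

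First I would prove $m\mid d$. Since $B\subseteq\tilde B=\CC[[\phi]]$, every nonzero element of $B$ is of the form $g(\phi)$ for some nonzero $g\in\CC[[X]]$, and $\ord_t(g(\phi))$ equals $m$ times the order of $g$; hence $S\subseteq m\,\ZZ_{\geq 0}$, so $m$ divides every element of $S$, and in particular $m\mid d$.

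Next I would prove $d\mid m$. The ring $A_\gamma$ is a complete local domain (this is established in the proof of Lemma \ref{hochster-fix2}), hence excellent, so its normalization is a finite $A_\gamma$-module; transporting along $\overline{\gamma}^*$, the ring $\CC[[\phi]]=\tilde B$ is a finite $B$-module. Choosing finitely many module generators and clearing denominators, there is a nonzero $b\in B$ with $b\cdot\CC[[\phi]]\subseteq B$; in particular $b\phi^{\,j}\in B$ for every $j\geq 0$, so $\ord_t(b)+jm\in S$ for all $j$. Fixing $j$ large enough that $\ord_t(b)+jm>0$, both $\ord_t(b)+jm$ and $\ord_t(b)+(j+1)m$ lie in $S\setminus\{0\}$, so $d$ divides each of them and therefore divides their difference $m$. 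Combining with the previous paragraph gives $d=m=\ord_t(\phi)$, and the final assertion ($\ord_\gamma$ normalized $\iff\ord_t(\phi)=1$) is then immediate from Definition \ref{normalized}.

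The argument is essentially the elementary fact that a one-dimensional local domain and its normalization (here the DVR $\CC[[\phi]]$) differ only inside a conductor ideal, so their value semigroups generate the same subgroup of $\ZZ$; accordingly the only points needing care are the bookkeeping in the first paragraph (checking that the set in the statement is precisely $S\setminus\{0\}$) and the input that the normalization of $A_\gamma$ is module-finite, which is exactly the fact already used in Lemma \ref{hochster-fix2}. An alternative is to first reparametrize $t$ by the binomial-series trick of Lemma \ref{hochster-fix1} so that $\phi=t^m$, which makes both divisibilities slightly more transparent; but then one must also observe that $d$ and $\ord_t(\phi)$ are unchanged under a reparametrization (since such an automorphism of $\CC[[t]]$ preserves each power of $(t)$), so I would likely just argue directly with $\phi$ as above.
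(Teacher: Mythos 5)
Your proof is correct, and half of it deviates from the paper's in an interesting way. The easy divisibility $\ord_t(\phi)\mid d$ is handled identically in both (every value of an element of $A_\gamma$ lies in $\ord_t\bigl(\CC[[\phi]]\setminus\{0\}\bigr)=\ord_t(\phi)\cdot\ZZ_{\geq 0}$). For the other direction, the paper argues more cheaply: since $\tilde{A_\gamma}\subseteq\Frac(A_\gamma)$, the element corresponding to $\phi$ is a quotient $f/g$ with $f,g\in A_\gamma$, so $\ord_t(\phi)=\ord_{\overline{\gamma}}(f)-\ord_{\overline{\gamma}}(g)$ is a difference of two values each divisible by $d$, and that is the whole proof --- no finiteness of the normalization is needed, only the defining containment of $\tilde{A_\gamma}$ in the fraction field. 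You instead invoke module-finiteness of $\CC[[\phi]]$ over $B$ (legitimately available, since it is exactly the input of Lemma \ref{hochster-fix2}) to produce a conductor-type element $b$ with $b\,\CC[[\phi]]\subseteq B$, and then read off $d\mid m$ from the two consecutive semigroup elements $\ord_t(b)+jm$ and $\ord_t(b)+(j+1)m$; this is the standard value-semigroup/conductor viewpoint, slightly heavier but sound, and it makes explicit that $S\setminus\{0\}$ and $\ord_t(\phi)\ZZ_{>0}$ agree in all sufficiently large degrees, which is a bit more information than the statement requires. Your bookkeeping identifying the set in the statement with $S\setminus\{0\}$, and the closing deduction of the normalization criterion from Definition \ref{normalized}, match the paper's implicit usage.
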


\begin{proof}
For $f, g \in A_\gamma$ such that $\frac{f}{g} \in \tilde{A_\gamma} \subseteq \Frac(A_\gamma)$, we have $\ord_{\overline{\gamma}}(\frac{f}{g})=\ord_{\overline{\gamma}}(f)-\ord_{\overline{\gamma}}(g)$, and hence $d$ divides $\ord_{\overline{\gamma}}(\frac{f}{g})$. In particular $d$ divides $\ord_t(\phi)$. We have $\overline{\gamma}^*(A_\gamma) \subseteq \overline{\gamma}^*(\tilde{A_\gamma}) = \CC[[\phi]] \subseteq \CC[[t]]$ and hence $\ord_t(\phi)$ divides $\ord_\gamma(f)$ for all $f$ $\in$ $A_\gamma$. So $\ord_t(\phi)$ divides $d$. Hence $d=\ord_t(\phi)$.
\end{proof}

\begin{definition}[Nonsingular arc valuations]\label{nonsingular}
Let $v$ be an arc valuation centered at $p$, and let $\mathfrak{m}_p$ denote the maximal ideal of $\widehat{\mathcal{O}}_{X, p}$. We call $v$ \textit{nonsingular} if 
\begin{equation}
\displaystyle \min_{f \in \mathfrak{m}_p} v(f) \ = 1.
\end{equation}
If $\gamma \in X_\infty$, then we say $\gamma$ is nonsingular if $\ord_\gamma$ is a nonsingular valuation.
\end{definition}

Let $C$ be an irreducible subset of $X_\infty$, and let $\alpha$ be the generic point of $C$. Following Ein, Lazarsfeld, and Musta\c{t}\v{a} \cite[p.3]{mustata}, we define a map $\val_C:\mathcal{O}_{X, \alpha(o)} \to \ZZZ$ by setting for $f \in \mathcal{O}_{X,\alpha(o)}$
\begin{equation}\label{valc} 
\val_C(f)=\displaystyle \min \{ \ord_\gamma(f)  \mid \gamma\in C \   \textrm{such that} \  f \in \mathcal{O}_{X, \gamma(o)}\}
\end{equation}

\begin{proposition}\label{gen-pt-cyl}
If $C \subseteq X_\infty$ is an irreducible subset with generic point $\alpha:\Spec K[[t]] \to X$, then $\val_C = \ord_\alpha$ on $\mathcal{O}_{X, \alpha(o)}$. In particular, $\val_C$ is an arc valuation.
\end{proposition}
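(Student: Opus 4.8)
The plan is to reconcile the two descriptions of $\val_C(f)$ — the infimum over $\gamma\in C$ in \eqref{valc} and the value $\ord_\alpha(f)$ at the generic point — by using the closedness of contact loci. First I would reduce to the situation where $f$ is a genuine section on an open subscheme and $C$ lies over it. Let $V\subseteq X$ be the domain of definition of $f$, an open subset of $X$; since $\pi_X\colon X_\infty\to X$ is a morphism of schemes, $\pi_X^{-1}(V)=V_\infty$ is open in $X_\infty$, and an arc $\gamma$ satisfies $f\in\mathcal{O}_{X,\gamma(o)}$ precisely when $\gamma\in V_\infty$. Hence the index set of the infimum in \eqref{valc} is exactly $C\cap V_\infty$. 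Because $\alpha(o)\in V$, the generic point $\alpha$ lies in the open set $V_\infty$, so $C\cap V_\infty$ is a nonempty (dense) irreducible subset of $C$ with the same generic point $\alpha$. From here on I work inside $V_\infty$, with $f\in\Gamma(V,\mathcal{O}_X)$.

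Next I would invoke the fact recorded in Section \ref{background} that for every integer $p\geq 0$ the contact locus $\Cont^{\geq p}(f)$ of the principal ideal $(f)$ is a closed subset of $V_\infty$; equivalently, $\gamma\mapsto\ord_\gamma(f)$ is lower semicontinuous on $V_\infty$, with the convention $\ord_\gamma(f)=\infty$ when $\gamma^*(f)=0$. Put $m=\ord_\alpha(f)\in\ZZZ$. If $m=\infty$, then $\alpha\in\Cont^{\geq p}(f)$ for all $p$, and since each $\Cont^{\geq p}(f)$ is closed and $\alpha$ is the generic point of $C\cap V_\infty$, we get $C\cap V_\infty\subseteq\bigcap_p\Cont^{\geq p}(f)$; thus $\ord_\gamma(f)=\infty$ for all $\gamma\in C\cap V_\infty$, so $\val_C(f)=\infty=m$. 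If $m<\infty$, then on the one hand $\alpha\in\Cont^{\geq m}(f)$ forces $C\cap V_\infty\subseteq\Cont^{\geq m}(f)$, so $\ord_\gamma(f)\geq m$ for every such $\gamma$ and $\val_C(f)\geq m$; on the other hand $\alpha$ lies in the open set $V_\infty\setminus\Cont^{\geq m+1}(f)$, so $\{\gamma\in C\cap V_\infty : \ord_\gamma(f)=m\}=(C\cap V_\infty)\setminus\Cont^{\geq m+1}(f)$ is a dense open subset of $C$; in particular it contains $\alpha$, so the infimum defining $\val_C(f)$ is attained (by $\gamma=\alpha$, and by the general $\gamma\in C$) and equals $m$. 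In both cases $\val_C(f)=m=\ord_\alpha(f)$, as wanted.

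Finally, the last sentence of the proposition is immediate: by Remark \ref{point} the point $\alpha$ carries a canonical $\kappa(\alpha)$-arc $\alpha\colon\Spec\kappa(\alpha)[[t]]\to X$ centered at $\alpha(o)$, so $\ord_\alpha$ is by Definition \ref{defn-arc-valn} a $\kappa(\alpha)$-arc valuation on $X$; since $\val_C=\ord_\alpha$ on $\mathcal{O}_{X,\alpha(o)}$, the map $\val_C$ is an arc valuation. I do not anticipate a genuine difficulty here: the semicontinuity input is already available from the background section, and the only delicate bookkeeping is making sure the generic point of $C$ survives the passage to the open subscheme $V_\infty$ and that $C\cap V_\infty$ is precisely the index set of \eqref{valc} — which is exactly what the choice of $V$ as the domain of definition of $f$ arranges.
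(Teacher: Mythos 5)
Your argument is correct and is essentially the paper's own proof: both restrict to the open locus $\pi^{-1}(U)$ (arcs whose center lies in the domain of definition of $f$), note that $\alpha$ belongs to it, and use the closedness of the contact loci $\Cont^{\geq p}(f)$ together with the fact that a closed subset containing the generic point of $C$ must contain all of $C\cap\pi^{-1}(U)$. The only differences are organizational — you argue the two inequalities directly and treat the case $\ord_\alpha(f)=\infty$ separately, whereas the paper phrases the same semicontinuity input as a contradiction — so nothing further is needed.
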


\begin{proof}
Fix $f \in \mathcal{O}_{X, \alpha(o)}$, and let $U \subseteq X$ be the maximal open set on which $f$ is regular. We have $\ord_\alpha(f) \geq \val_{C}(f)$ by Equation (\ref{valc}). Let $\alpha ' \in C$ be such that $\val_{C}(f)=\ord_{\alpha '}(f)$. Let $\pi:X_\infty \to X$ be the canonical morphism sending $\gamma \to \gamma(o)$. If $\ord_\alpha(f)>\val_C(f)$, then $C \cap \Cont^{\geq \ord_\alpha(f)}(f)$ is a closed subset of the irreducible set $C \cap \pi^{-1}(U)$, containing $\alpha$ but not $\alpha ' \in C$, contradicting $\overline{\{\alpha\}}=C$. Hence $\ord_\alpha(f)=\val_C(f)$ for all $f \in \mathcal{O}_{X,\alpha(o)}$.

\end{proof}

Next, we show arc valuations are the same as $\ZZZ$-valued valuations, which are defined as follows:

\begin{definition}\label{defn-of-valn}
Let $R$ be a $\CC$-algebra. A $\ZZZ$\textit{-valued valuation} on $R$ is a map $v:R \to \ZZZ$ such that
\begin{enumerate}
\item $v(c)=0$ for $c \in \CC^*$
\item $v(0)=\infty$
\item $v(xy)=v(x)+v(y)$ for $x, y \in R$
\item $v(x+y) \geq \min \{v(x), v(y)\}$ for $x, y \in R$
\item $v$ is not identically $0$ on $R^*$.
\end{enumerate}  
\end{definition} 

Let $p \in X$ be a (not necessarily closed) point of $X$, and let $v:\mathcal{O}_{X,p} \to \ZZZ$ be a $\ZZZ$-valued valuation. Set $\mathfrak{p}=\{f \in \mathcal{O}_{X,p} \mid v(f)=\infty\}$. We have an induced valuation $\tilde{v}:\mathcal{O}_{X,p}/\mathfrak{p} \to \ZZ$ that extends as usual to a valuation $\tilde{v}:\Frac(\mathcal{O}_{X,p}/\mathfrak{p}) \to \mathbb{Z}$. Let $R_{\tilde{v}} = \{f \in \Frac(\mathcal{O}_{X,p}/\mathfrak{p}) \mid \tilde{v}(f) \geq 0\}$ be the valuation ring of $\tilde{v}$. $R_{\tilde{v}}$ is a discrete valuation ring. Let $\mathfrak{m}_{\tilde{v}}$ be the maximal ideal of $R_{\tilde{v}}$, and let $\kappa(v) = R_{\tilde{v}}/\mathfrak{m}_{\tilde{v}}$.

\begin{proposition}
Let $p \in X$ be a (not necessarily closed) point of $X$. If $v:\mathcal{O}_{X,p} \to \ZZ_{\geq 0} \cup \{\infty\}$ is a valuation as in Definition \ref{defn-of-valn}, then $v$ is an arc valuation on $X$.
\end{proposition}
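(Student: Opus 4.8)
The plan is to realize $v$ concretely: pass to its valuation ring, complete, and invoke the Cohen structure theorem.

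\emph{Step 1: reduction to the center.} The set $\mathfrak{q} = \{f \in \mathcal{O}_{X,p} \mid v(f) > 0\}$ is a prime ideal with $\mathfrak{p} \subseteq \mathfrak{q} \subseteq \mathfrak{m}_p$; concretely it is the contraction of $\mathfrak{m}_{\tilde v}$ along $\mathcal{O}_{X,p} \twoheadrightarrow \mathcal{O}_{X,p}/\mathfrak{p} \hookrightarrow R_{\tilde v}$. Every $s \in \mathcal{O}_{X,p} \setminus \mathfrak{q}$ has $v(s) = 0$, so $v$ extends uniquely to a $\ZZZ$-valued valuation on the localization $\mathcal{O}_{X,\mathfrak q}$, now centered at the point $\mathfrak q \in X$. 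An arc realizing this extension also realizes $v$ (restrict $\ord_\gamma$ along $\mathcal{O}_{X,p} \to \mathcal{O}_{X,\mathfrak q}$), so we may and do assume $\mathfrak q = \mathfrak m_p$, i.e.\ $v(f) > 0$ for every $f \in \mathfrak m_p$. This reduction is the one genuinely delicate point: if some non-unit had $v$-value $0$, then no arc with center $p$ could realize $v$, because $\ord_\gamma$ sends $\mathfrak m_p$ into $(t)$ whenever $\gamma(o) = p$.

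\emph{Step 2: complete and apply Cohen.} The ring $R_{\tilde v}$ is a DVR containing $\CC$ with residue field $\kappa(v) \supseteq \CC$. Its $\mathfrak m_{\tilde v}$-adic completion $\widehat{R_{\tilde v}}$ is a complete DVR with the same residue field, and $R_{\tilde v} \hookrightarrow \widehat{R_{\tilde v}}$ is injective and preserves the valuation exactly (Krull intersection theorem together with faithful flatness of completion). Since we are in equal characteristic zero with $\CC \subseteq \kappa(v)$, the Cohen structure theorem gives a $\CC$-algebra isomorphism $\widehat{R_{\tilde v}} \xrightarrow{\sim} \kappa(v)[[t]]$ carrying the valuation of $\widehat{R_{\tilde v}}$ to $\ord_t$. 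Composing
$$\mathcal{O}_{X,p} \twoheadrightarrow \mathcal{O}_{X,p}/\mathfrak p \hookrightarrow R_{\tilde v} \hookrightarrow \widehat{R_{\tilde v}} \xrightarrow{\sim} \kappa(v)[[t]]$$
yields a $\CC$-algebra homomorphism $\gamma^* : \mathcal{O}_{X,p} \to K[[t]]$ with $K := \kappa(v)$, hence a $K$-arc $\gamma$ on $X$ (compose $\Spec K[[t]] \to \Spec \mathcal{O}_{X,p}$ with the canonical map $\Spec \mathcal{O}_{X,p} \to X$).

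\emph{Step 3: verify $\ord_\gamma = v$.} The contraction of $(t)$ along $\gamma^*$ is $\mathfrak m_{\tilde v}$ pulled back to $\mathcal{O}_{X,p}$, which is $\mathfrak q = \mathfrak m_p$ after Step 1, so $\gamma(o) = p$. For $f \in \mathfrak p$ we have $\gamma^*(f) = 0$, hence $\ord_\gamma(f) = \infty = v(f)$ by convention; for $f \notin \mathfrak p$, $\gamma^*(f)$ is the image of $\bar f \in \mathcal{O}_{X,p}/\mathfrak p$, and $\ord_t \gamma^*(f)$ equals the value of $\bar f$ in $\widehat{R_{\tilde v}}$, which equals $\tilde v(\bar f) = v(f)$ by the valuation-preserving properties of Step 2. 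Thus $\ord_\gamma = v$ on $\mathcal{O}_{X,p}$, so $v$ is a $\kappa(v)$-arc valuation on $X$. The only external inputs are the Cohen structure theorem and standard facts about completions of DVRs; the step most prone to error is the center reduction in Step 1, which must be arranged so that the constructed arc has center exactly the point over which $v$ is being realized.
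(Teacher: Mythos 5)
Your proposal is correct and follows essentially the same route as the paper: form the valuation ring $R_{\tilde v}$ of the induced valuation on $\Frac(\mathcal{O}_{X,p}/\mathfrak{p})$, pass to its completion $\widehat{R}_{\tilde v}\simeq \kappa(v)[[t]]$, and take the composite $\mathcal{O}_{X,p}\to\kappa(v)[[t]]$ as the realizing arc, verifying $\ord_\gamma=v$. Your Step 1 (recentering at $\mathfrak{q}=\{f\mid v(f)>0\}$) only makes explicit what the paper leaves implicit---its arc likewise has center the point of $X$ corresponding to $\mathfrak{q}$ rather than $p$ when some element of $\mathfrak{m}_p$ has value zero---so the two arguments coincide in substance.
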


\begin{proof}
 The completion $\widehat{R}_{\tilde{v}}$ of $R_{\tilde{v}}$ with respect $\mathfrak{m}_{\tilde{v}}$ is again a discrete valuation ring. Hence $\widehat{R}_{\tilde{v}}$ is isomorphic to the power series ring $\kappa(v)[[t]]$. The composition of the canonical homomorphisms $\mathcal{O}_{X,p} \to \mathcal{O}_{X,p}/\mathfrak{p} \to R_{\tilde{v}} \to \widehat{R}_{\tilde{v}} = \kappa(v)[[t]]$ gives an arc $\gamma: \Spec \kappa(v)[[t]] \to X$. Tracing through the constructions, we see that $\ord_\gamma = v$ on $\mathcal{O}_{X,p}$.   
\end{proof}

\begin{remark}\label{krull} 
Following \cite{mattias}, a \textit{Krull valuation} $V$ is a map $V:\CC(X)^* \to \Gamma$, where $\CC(X)$ is the function field of $X$ and $\Gamma$ is a totally ordered abelian group, satisfying equations (1), (3), (4), (5) of Definition \ref{defn-of-valn}. For a discussion of the differences between Krull valuations and valuations (as defined in Definition \ref{defn-of-valn}) in the case of surfaces, see \cite[Section 1.6]{mattias}. For example, Favre and Jonsson associate to any Krull valuation $V:\mathbb{C}[[x, y]] \to \Gamma$ other than an exceptional curve valuation, a unique (up to scalar multiple) valuation $v:\mathbb{C}[[x, y]] \to \mathbb{R}\cup \{\infty\}$ \cite[Prop. 1.6]{mattias}. 
\end{remark}

To any Krull valuation $V:\CC(X)^* \to \ZZ^r$ (where $\ZZ^r$ is lexicographically ordered with $(0, \ldots, 0, 1)$ as the smallest positive element) with center $p$ (that is, the valuation ring $R_V:=\{f \in \CC(X)^* \mid V(f) \geq 0\} \cup \{0\}$ dominates $\mathcal{O}_{X,p}$), we associate an arc valuation $v:\mathcal{O}_{X,p} \to \ZZ_{\geq 0} \cup \{\infty\}$ as follows. Set $v(0)=\infty$. For $f \in \mathcal{O}_{X,p}$, suppose $V(f)=(a_1, \ldots, a_r)$. If $a_1=a_2=\ldots=a_{r-1}=0$, set $v(f)=a_r$. Otherwise, set $v(f)=\infty$. 

When $\dim X=2$, the above association $V \to v$ gives a bijection between Krull valuations $V:\CC(X)^* \to \ZZ^2$ centered at $p$ and arc valuations centered at $p$ \cite[Prop. 1.6]{mattias}.

The following example shows this association $V \to v$ is not injective in general.

\begin{example}\label{bad-example}
Let $X=\Spec \CC[x, y, z]$ and let $V_1:\CC(X)^* \to \ZZ^2$ and $V_2:\CC(X)^* \to \ZZ^3$ be Krull valuations defined by 
$V_1(\sum c_{ijk}x^iy^jz^k = \min\{(j+2k,i) \mid c_{ijk} \neq 0\}$ and
$V_2(\sum c_{ijk}x^iy^jz^k = \min\{(0,j+k,i) \mid c_{ijk} \neq 0\}$. Then $V_1, V_2$ both have transcendence degree $0$ over $\CC$, and have the same sequence of centers. The arc valuations associated (in the manner described above) to $V_1$ and $V_2$  both equal the arc valuation $\ord_\gamma$ where $\gamma:\Spec \CC[[t]] \to X$ is the arc given by $x \to t$, $y \to 0$, and $z \to 0$.
\end{example}

\section{Desingularization of normalized $\CC$-arc valuations}\label{zero-valns}

In this section, we prove that a normalized $\CC$-arc valuation on a nonsingular variety $X$ over a field $\CC$ can be desingularized. Specifically, the goal of this section is to prove Proposition \ref{val-type}, which says that a normalized $\CC$-arc can be lifted after finitely many blowups to a $\CC$-arc that is nonsingular. Our proof is based on Hamburger-Noether expansions.
 
Let $X$ be a nonsingular variety of dimension $n$ ($n \geq 2$) over a field $\CC$ and let $p_0 \in X$ be a closed point. Let $\gamma:\Spec \CC[[t]] \to X$ be an arc such that $\gamma(o)=p_0$ and $v:=\ord_\gamma$ is a normalized arc valuation (Definition \ref{normalized}). Let $p_i \in X_i$ ($i\geq 0$) be the sequence of centers of $v$, as described in Definition \ref{divisor-limit}.  If $\gamma_r$ denotes the unique lift of $\gamma$ to $X_r$ (by Lemma \ref{lemma2}), then note that $v$ extends to the valuation $\widehat{\mathcal{O}}_{X_r,p_r} \to \ZZZ$ associated to $\gamma_r$. Hence for $f \in \widehat{\mathcal{O}}_{X_r, p_r}$, we will write $v(f)$ to mean $\ord_{\gamma_r}(f)$.

\subsection{Hamburger-Noether expansions}\label{hne-section}
We will use a list of equations known as Hamburger-Noether expansions (HNEs) to keep track of local coordinates of the sequences of centers of $v$. We explain HNEs in this section. Our source for this material is \cite[Section 1]{delgado}, where the presentation is given for arbitrary valuations on a nonsingular surface. 

HNEs are constructed by repeated application of Lemma \ref{lemma2} part \ref{lem2-i-2}, which we recall:

\begin{lemma}\label{basis-of-hne}
Let $X$ be a nonsingular variety of dimension $n$ ($n \geq 2$) over a field $\CC$ and let $p_0 \in X$ be a closed point. Let $\gamma:\Spec \CC[[t]] \to X$ be an arc such that $\gamma(o)=p_0$ and $v:=\ord_\gamma$ is a normalized arc valuation (Definition \ref{normalized}).  Let $x_1, x_2, \ldots , x_n$ be local algebraic coordinates at $p_0$ such that $1\leq v(x_1) \leq v(x_i)$ for $2 \leq i \leq n$. Then for $2 \leq i \leq n$, there exists $a_{i,1} \in \CC$ such that if we let $y_i=\frac{x_i}{x_1} - a_{i,1} \in \CC(X)$, then $x_1, y_2, \ldots, y_n$   generate the maximal ideal of $\mathcal{O}_{X_1, p_1} \subseteq \CC(X)=\CC(X_1)$.
\end{lemma}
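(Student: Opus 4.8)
The plan is to reduce this to an application of Lemma \ref{lemma2}, part \ref{lem2-i-2}, since the present lemma is essentially a restatement of that result adapted to the HNE setting. First I would verify that the hypotheses of Lemma \ref{lemma2} are satisfied: the arc $\gamma$ is not a zero arc, because $v = \ord_\gamma$ is normalized and hence takes some finite positive value; and since $v(x_1) \geq 1$, we have $x_1 \in \mathfrak{m}_{p_0}$, and likewise each $x_i \in \mathfrak{m}_{p_0}$, so $\gamma \in \Cont^{\geq 1}(p_0)$. Then $\mu_1: X_1 \to X$ is the blowup of $p_0$ with exceptional divisor $E_1$, and by Lemma \ref{lemma2}\eqref{lem2-i} the arc $\gamma$ lifts uniquely to an arc $\gamma_1$ on $X_1$ with $\gamma_1 \in \Cont^{\geq 1}(E_1)$, so $p_1 = \gamma_1(o)$ is well-defined.

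Next I would apply Lemma \ref{lemma2}\eqref{lem2-i-2} directly. The hypothesis $1 \leq v(x_1) \leq v(x_i)$ for all $2 \leq i \leq n$ means exactly that $\ord_\gamma(x_1) \leq \ord_\gamma(x_i)$ for all $i$, which is the coordinate-ordering hypothesis of that part of the lemma. It then furnishes constants $c_i \in \CC$ (for $2 \leq i \leq n$) such that $x_1$ together with $\frac{x_i}{x_1} - c_i$ form local algebraic coordinates at $p_1 = \gamma_1(o)$. Setting $a_{i,1} := c_i$ and $y_i := \frac{x_i}{x_1} - a_{i,1}$, we conclude that $x_1, y_2, \ldots, y_n$ generate the maximal ideal of $\mathcal{O}_{X_1, p_1}$. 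The containment $\mathcal{O}_{X_1, p_1} \subseteq \CC(X) = \CC(X_1)$ is automatic since $\mu_1$ is birational, so the elements $y_i \in \CC(X)$ make sense as asserted.

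The only genuine content beyond quoting Lemma \ref{lemma2} is checking that the local coordinates on the affine chart of the blowup are indeed given by $x_1$ and the functions $\frac{x_i}{x_1}$: on the chart of $X_1$ where $x_1$ generates the exceptional ideal, the coordinate ring is $\mathcal{O}_{X,p_0}[\frac{x_2}{x_1}, \ldots, \frac{x_n}{x_1}]$, which is regular of dimension $n$, and the point $p_1$ lies in this chart precisely because $v(x_1) \leq v(x_i)$ forces $\ord_\gamma(\frac{x_i}{x_1}) \geq 0$. The residue of $\frac{x_i}{x_1}$ at $p_1$ is some element of $\CC$ (this uses that $\CC$ is algebraically closed and, via Proposition \ref{closed-point}, that $p_1$ has residue field $\CC$), which we call $a_{i,1}$; subtracting it gives a regular system of parameters. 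I do not expect any real obstacle here — this lemma is a bookkeeping device recording one step of the blowup sequence in coordinates, and the work has already been done in Lemma \ref{lemma2}. If one prefers a self-contained argument, the main (still routine) point to spell out is that $x_1, y_2, \ldots, y_n$ generate $\mathfrak{m}_{p_1}$ rather than merely lie in it, which follows by comparing dimensions: $n$ elements generating an ideal whose quotient is a field in an $n$-dimensional regular local ring must be a regular system of parameters.
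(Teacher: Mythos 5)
Your proposal is correct and takes essentially the same route as the paper, which gives no independent argument: it presents this lemma exactly as a recollection of Lemma \ref{lemma2}, part \ref{lem2-i-2} (whose proof is itself left to the reader), applied to the lift $\gamma_1$ of $\gamma$, just as you do. One minor note: algebraic closure of $\CC$ is not needed (and is not assumed in the statement) — the residue of $\frac{x_i}{x_1}$ at $p_1$ lies in $\CC$ simply because $p_1$ is the center of the $\CC$-arc $\gamma_1$, hence has residue field $\CC$ by Proposition \ref{closed-point}, which you also invoke.
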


We now describe how to write down the HNEs, following \cite[Section 1]{delgado}. Let $x_i, a_{i, 1}, y_i$ be as in Lemma \ref{basis-of-hne}. We have $x_i=a_{i,1}x_1 + x_1y_i$. If $v(x_1) \leq v(y_i)$ for every $2 \leq i \leq n$, then with the local algebraic coordinates $x_1, y_2, \ldots, y_n$ at $p_1$ we are in a similar situation as before, and we repeat the process of applying Lemma \ref{basis-of-hne} to get local algebraic coordinates at $p_2$. Suppose that after $h$ steps we have local algebraic coordinates $x_1, y_2', \ldots y_n'$ at $p_h$ such that $v(x_1) > v(y_j')$ for some $2 \leq j \leq n$. We may choose $j$ such that $v(y_j') \leq v(y_i')$ for $2 \leq i \leq n$. There are $a_{i,k} \in \CC$ such that 
\begin{equation}\label{hne-eqn}
x_i=a_{i,1}x_1 + a_{i, 2}x^2_1 + \ldots + a_{i,h}x^h_1 + x^h_1y_i'
\end{equation}
 for $2 \leq i \leq n$, $1 \leq k\leq h$. The assumption that $p_h$ is a closed point implies $v(y_i')>0$ for $2 \leq i \leq n$. Let $z_1=y_j'$, and we repeat the procedure of applying Lemma \ref{basis-of-hne} with the local coordinates $z_1, x_1, y_2', \ldots, y_{j-1}',y_{j+1}', \ldots, y_n'$ (note that we brought $z_1$ to the front of the list because it is the coordinate with smallest value). We will refer to such a change in the first coordinate (in this case, from $x_1$ to $z_1$) of our list as an iteration. 

If we do not arrive at a situation where $v(x_1)>v(y_j')$ for some $2 \leq j \leq n$, then there exist $a_{i,k} \in \CC$ (for $2\leq i\leq n$, and all $k \geq 1$) such that $$v\left(\frac{x_i-\sum_{k=1}^Na_{i,k}x^k_1}{x^N_1}\right) \geq v(x_1),$$ and hence (since $v(x_1) \geq 1$)

\begin{equation}\label{hne-eqn2}
v\left( x_i-\sum_{k=1}^{N} a_{i, k}x^k_1\right)>N
\end{equation}
for all $N>0.$

Let $z_0=x_1$, and for $l>0$ let $z_l$ be the first listed local coordinate at the $l$-th iteration. We have $v(z_l)<v(z_{l-1})$ since an iteration occurs when the smallest value of the local coordinates at the center decreases in value after a blowup. So $\{v(z_l)\}_{l \geq 0}$ is a strictly decreasing sequence of positive integers, and hence must be finite, say $v(z_0), v(z_1), \ldots, v(z_L)$. 

For notational convenience, redefine $x_1, \ldots, x_n$ to be the local algebraic coordinates after the final iteration, with $x_1=z_L$. So $x_1, \ldots, x_n$ are local algebraic coordinates centered at $p_r$ on $X_r$ for some $r$, and Equation \ref{hne-eqn2} becomes 
\begin{equation}\label{hne-eqn3}
v(x_i-\sum_{k=1}^{N} c_{i, k}x^k_1)>N
\end{equation}
for $2 \leq i \leq n$, $c_{i, k} \in \CC$, and all $N>0$. 

\begin{definition}\label{defn-of-P}
Let $P_1(t)=t$, and for $2 \leq i \leq n$ define $P_i(t) \in \CC[[t]]$ by $P_i(t)=\sum_{k=1}^{\infty} c_{i,k}t^k$.
\end{definition}

\begin{remark}\label{eqn4}
Equation \ref{hne-eqn3} implies  $v(x_i-P_i(x_1))=\infty$ for $2\leq i\leq n$. 
\end{remark}



\begin{lemma}\label{formula}
For every $\psi=\psi(x_1, \ldots, x_n) \in \widehat{\mathcal{O}}_{X_r, p_r} \simeq \CC[[x_1,\ldots, x_n]]$, we have $v(\psi) = \ord_t\psi(t, P_2(t), \ldots, P_n(t)).$   
\end{lemma}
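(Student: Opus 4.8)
The plan is to compute the homomorphism $\gamma_r^*$ explicitly on an arbitrary power series, and then to reduce the resulting $t$-adic order computation to the statement using the normalization hypothesis. Recall that, with the coordinates fixed just before the statement, $v=\ord_{\gamma_r}$ on $\widehat{\mathcal{O}}_{X_r,p_r}\simeq\CC[[x_1,\ldots,x_n]]$, so that $v(\psi)=\ord_t\gamma_r^*(\psi)$, where $\gamma_r^*\colon\CC[[x_1,\ldots,x_n]]\to\CC[[t]]$ is the local (hence continuous) $\CC$-algebra homomorphism attached to the arc $\gamma_r$. Set $\phi:=\gamma_r^*(x_1)\in(t)\CC[[t]]$; it is nonzero because $v(x_1)=v(z_L)<\infty$. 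Remark \ref{eqn4} gives $v(x_i-P_i(x_1))=\infty$, i.e. $\gamma_r^*(x_i)=\gamma_r^*(P_i(x_1))=P_i(\phi)$ for $2\le i\le n$ (using continuity of $\gamma_r^*$ to pull it inside the series $P_i$), while $\gamma_r^*(x_1)=\phi=P_1(\phi)$. Hence, again by continuity of $\gamma_r^*$, for every $\psi\in\CC[[x_1,\ldots,x_n]]$ we get $\gamma_r^*(\psi)=\psi\bigl(P_1(\phi),\ldots,P_n(\phi)\bigr)$; since all the $P_i(t)$ and $\phi$ lie in $(t)$, associativity of substitution of formal power series lets us rewrite this as $G(\phi)$, where $G(t):=\psi\bigl(P_1(t),\ldots,P_n(t)\bigr)=\psi(t,P_2(t),\ldots,P_n(t))$ is precisely the series in the statement. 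As $\phi$ is a nonzero element of $(t)$, substituting $t\mapsto\phi$ multiplies $t$-adic order by $\ord_t\phi$, so
\begin{equation*}
v(\psi)=\ord_tG(\phi)=\ord_t(\phi)\cdot\ord_tG=v(x_1)\cdot\ord_t\psi(t,P_2(t),\ldots,P_n(t)).
\end{equation*}

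It remains to check that $v(x_1)=1$, and this is the only place the hypothesis that $\gamma$ is normalized is used. Because $\gamma=\mu_r\circ\gamma_r$, the map $\gamma^*\colon\widehat{\mathcal{O}}_{X,p_0}\to\CC[[t]]$ factors through the induced homomorphism $\widehat{\mathcal{O}}_{X,p_0}\to\widehat{\mathcal{O}}_{X_r,p_r}\simeq\CC[[x_1,\ldots,x_n]]$; applying the displayed identity to the image of an arbitrary $f\in\widehat{\mathcal{O}}_{X,p_0}$ shows that $v(f)$ is a multiple of $v(x_1)$. Thus $v(x_1)$ divides the greatest common divisor of the finite positive values of $v$ on $\widehat{\mathcal{O}}_{X,p_0}$, which equals $1$ since $\gamma$ is normalized (Definition \ref{normalized}; cf. Proposition \ref{hochster-genius}). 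Since $v(x_1)\ge1$, this forces $v(x_1)=1$, and the lemma follows.

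I expect the only real care to be needed in the formal power series bookkeeping of the first paragraph — the assertions that $\gamma_r^*$ commutes with infinite sums and that $\psi(P_1(t),\ldots,P_n(t))$ may be freely re-substituted $t\mapsto\phi$ — which is routine but must be phrased so that all sums converge $t$-adically; and in the short verification that the greatest common divisor occurring in Definition \ref{normalized} is exactly $v(x_1)$, so that the normalization hypothesis can be invoked to conclude. Everything else is an immediate unwinding of Remark \ref{eqn4}.
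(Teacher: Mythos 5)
Your proof is correct, and it reaches the key intermediate identity $v(\psi)=v(x_1)\cdot\ord_t\psi(t,P_2(t),\ldots,P_n(t))$ by a route that differs from the paper's in its main computational step. You work on the arc side: using the convention that infinite value means vanishing under $\gamma_r^*$, you turn Remark \ref{eqn4} into $\gamma_r^*(x_i)=P_i(\phi)$ with $\phi=\gamma_r^*(x_1)$, then compute $\gamma_r^*(\psi)=G(\phi)$ by continuity and associativity of formal substitution, and finish with the fact that substituting a nonzero series $\phi\in(t)$ multiplies $t$-adic order by $\ord_t\phi$. The paper instead stays on the ring side $\CC[[x_1,\ldots,x_n]]$: it divides $\psi$ modulo the ideal $(x_2-P_2(x_1),\ldots,x_n-P_n(x_1))$ to write $\psi=q(x_1)+\sum_i(x_i-P_i(x_1))h_i$ with $q(x_1)=\psi(x_1,P_2(x_1),\ldots,P_n(x_1))$, kills the ideal terms using $v(x_i-P_i(x_1))=\infty$ and the valuation axioms, and evaluates $v(q)=\ord_{x_1}(q)\cdot v(x_1)$ by writing $q=x_1^{N}u$ with $u$ a unit; so its argument uses only the axioms of $v$ together with Remark \ref{eqn4}, without invoking the arc realization $\gamma_r$ at this stage, whereas yours leans on the explicit homomorphism $\gamma_r^*$ and the (routine but necessary) power-series bookkeeping you flag. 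Both proofs conclude identically, deducing $v(x_1)=1$ from normalization; here your version is in fact slightly more careful than the paper's, since you explicitly transfer the gcd condition from $\widehat{\mathcal{O}}_{X,p_0}$ to the values computed on $\widehat{\mathcal{O}}_{X_r,p_r}$ via the factorization $\gamma^*=\gamma_r^*\circ\mu_r^*$, a point the paper passes over by speaking simply of "the image of $v$".
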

\begin{proof}
Since $\CC[[x_1, \ldots, x_n]]/(x_2-P_2(x_1), \ldots,x_n-P_n(x_1)) \simeq \CC[[x_1]]$, we may write $\psi(x_1, \ldots, x_n)=q(x_1)+ \sum_{i=2}^{n}(x_i-P_i(x_1))h_i$ for $h_i\ \in \CC[[x_1, \ldots, x_n]]$ and $q(x_1) \in \CC[[x_1]]$. Note that $q(x_1)=\psi(x_1, P_2(x_1), \ldots, P_n(x_1))$. We have $v(\psi) \geq \min\{v(q), v((x_2-P_2(x_1))h_2), \ldots, v((x_n-P_n(x_1))h_n)\}$. Since $v((x_i-P_i(x_1))h_i)=\infty$, we have $v(\psi)=v(q)$, since in general, if $v(a) \neq v(b)$, then $v(a+b)=\min\{v(a), v(b)\}$. 

Let $n=\ord_{x_1} q(x_1)$. We claim $v(q)=nv(x_1)$. If $n=\infty$, then $q=0$ and both sides of $v(q)=nv(x_1)$ are $\infty$. If $n < \infty$, then $q=x^n_1u$ for a unit $u$ in $\CC[[x_1]]$. We have $v(u)=0$, since $0=v(1)=v(uu^{-1})=v(u) + v(u^{-1})$ and $v(u), v(u^{-1}) \geq 0$. Hence $v(q)=nv(x_1)$.

So we have $v(\psi)=v(q)=(\ord_{x_1}q(x_1))v(x_1)=\ord_{x_1}\psi(x_1, P_2(x_1) \ldots, P_n(x_1))\cdot v(x_1)$. Since $\psi$ was arbitrary, we have that the image of $v:\CC[[x_1, \ldots, x_n]] \to \ZZZ$ equals $\ZZ_{\geq 0}\cdot v(x_1) \cup \{\infty\}$. Since $v$ was normalized so that the image of $v$ had $1$ as the greatest common factor of its elements, we have $v(x_1)=1$ and $v(\psi) = \ord_t\psi(t,P_2(t), \ldots, P_n(t)).$
\end{proof}

Summarizing the discussion so far, we have:

\begin{proposition}\label{val-type}

Let $v$ be a normalized $\CC$-arc valuation on a nonsingular variety $X$ over a field $\CC$. Then there exists a nonnegative integer $r$ and local algebraic coordinates $x_1, \ldots, x_n$ at the center $p_r$ of $v$ on $X_r$ and $$P_i(t) \in (t)\CC[[t]]$$ for $2 \leq i \leq n$ such that for every $\psi=\psi(x_1, \ldots, x_n) \in \widehat{\mathcal{O}}_{X_r, p_r} \simeq \CC[[x_1,\ldots, x_n]]$, we have $$v(\psi) = \ord_t\psi(t,P_2(t), \ldots, P_n(t)).$$

\end{proposition}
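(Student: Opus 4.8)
The plan is to run the Hamburger--Noether construction of Section~\ref{hne-section} and then apply Lemma~\ref{formula}; since the proposition essentially summarizes that discussion, the work is to organize it into an induction together with a termination argument.

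First I would set up the construction. Choose local algebraic coordinates $x_1, \ldots, x_n$ at $p_0$, ordered so that $v(x_1) \leq v(x_i)$ for $2 \leq i \leq n$. Because $v$ is normalized (Definition~\ref{normalized}), the maximal ideal $\mathfrak{m}_{p_0}$ contains an element of finite value, and since for $h \in \mathfrak{m}_{p_0}$ one has $v(h) \geq \min_i v(x_i)$, we may take $v(x_1)$ to be a finite positive integer. Apply Lemma~\ref{basis-of-hne} repeatedly to produce local algebraic coordinates at the successive centers $p_1, p_2, \ldots$ of $v$. At each stage one of two things occurs: either $v(x_1) \leq v(y_i')$ for all the new coordinates $y_i'$, in which case we keep blowing up with the same first coordinate and record the relation~(\ref{hne-eqn}); or $v(x_1) > v(y_j')$ for some $j$, which forces an \emph{iteration}, promoting the coordinate $y_j'$ of least value to be the new first coordinate $z_l$.

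Second I would prove that the process terminates. Within a single block---that is, between two consecutive iterations, with a fixed first coordinate $z_l$---the relations~(\ref{hne-eqn}) show that if no iteration ever occurs in that block then $v\bigl(w_i - \sum_{k=1}^{N} a_{i,k} z_l^k\bigr) \geq (N+1)v(z_l) > N$ for all $N > 0$, which is precisely~(\ref{hne-eqn2}); in particular a block in which no iteration occurs must be the last block. Hence every block but possibly the last ends after finitely many blowups, and since the values $v(z_0) > v(z_1) > \cdots$ of the successive first coordinates form a strictly decreasing sequence of positive integers, there are only finitely many iterations, say $L$. Let $r$ be the number of blowups performed through the $L$-th iteration, and relabel $x_1, \ldots, x_n$ to be the coordinates at $p_r$ with $x_1 = z_L$; then no further iteration occurs, so~(\ref{hne-eqn3}) holds: there are $c_{i,k} \in \CC$ with $v\bigl(x_i - \sum_{k=1}^N c_{i,k} x_1^k\bigr) > N$ for every $N > 0$ and $2 \leq i \leq n$.

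Finally I would set $P_1(t) = t$ and $P_i(t) = \sum_{k \geq 1} c_{i,k} t^k \in (t)\CC[[t]]$ as in Definition~\ref{defn-of-P}, so that $v(x_i - P_i(x_1)) = \infty$ by Remark~\ref{eqn4}, and invoke Lemma~\ref{formula} to conclude $v(\psi) = \ord_t \psi(t, P_2(t), \ldots, P_n(t))$ for every $\psi \in \widehat{\mathcal{O}}_{X_r, p_r} \simeq \CC[[x_1, \ldots, x_n]]$; the normalization hypothesis enters at this last step, where it forces $v(x_1) = 1$. I expect the termination argument of the second step to be the main obstacle: one must verify both that only finitely many iterations occur and that, once iterations have ceased, the infinite expansion~(\ref{hne-eqn2}) genuinely holds, so that the finitely many data $r$, the coordinates at $p_r$, and the $P_i$ really do determine all of $v$. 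Everything after that is the bookkeeping already carried out in Lemma~\ref{formula}.
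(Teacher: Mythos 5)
Your proposal is correct and is essentially the paper's own argument: Proposition~\ref{val-type} is stated there as a summary of the Hamburger--Noether discussion, and your steps (repeated application of Lemma~\ref{basis-of-hne}, termination because the values $v(z_0)>v(z_1)>\cdots$ of the successive first coordinates form a strictly decreasing sequence of positive integers, then Definition~\ref{defn-of-P}, Remark~\ref{eqn4}, and Lemma~\ref{formula}) are exactly the route the paper takes, including the point that normalization forces $v(x_1)=1$ inside Lemma~\ref{formula}. No gaps beyond the bookkeeping already carried out in the paper.
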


Roughly speaking, this result says that a normalized $\CC$-arc valuation can be desingularized. More precisely, a normalized $\CC$-valued arc $\gamma$ can be lifted after finitely many blowups (of its centers) to an arc $\gamma_r$ that is nonsingular (see Definition \ref{nonsingular} for the definition of nonsingular arc). Using the notation of Proposition \ref{val-type}, the arc $\gamma_r:\Spec \CC[[t]] \to X_r$ is given by the $\CC$-algebra map $\widehat{\mathcal{O}}_{X_r, p_r} \to \CC[[t]]$ with $\ord_{\gamma_r}(x_1)=1$ and $x_i \to P_i(\gamma^*_r(x_1))$ for $2 \leq i \leq n$. Since $\ord_{\gamma_r}(x_1)=1$, we have $\gamma_r$ is a nonsingular arc. 

If the arc $\gamma$ is nonsingular, we can take $r=0$ in Proposition \ref{val-type}, and we have the following result.

\begin{proposition}\label{nonsingular-arc-thm}

Let $\gamma:\Spec \CC[[t]] \to X$ be a nonsingular $\CC$-arc on a nonsingular variety $X$ over a field $\CC$. Let  $x_1, \ldots, x_n$ be local algebraic coordinates at $p=\gamma(o)$ on $X$ with $\ord_\gamma(x_1)=1$ (Definition \ref{nonsingular}). Then there exists $$P_i(t) \in (t)\CC[[t]]$$ for $2 \leq i \leq n$ such that $\gamma^*(x_i)=P_i(\gamma^*(x_1))$ for $2 \leq i \leq n$. Furthermore, for every $\psi=\psi(x_1, \ldots, x_n) \in \widehat{\mathcal{O}}_{X, p} \simeq \CC[[x_1,\ldots, x_n]]$, we have $$\ord_\gamma(\psi) = \ord_t\psi(t,P_2(t), \ldots, P_n(t)).$$
\end{proposition}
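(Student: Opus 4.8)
The plan is to reduce everything to the single observation that, since $\gamma$ is nonsingular with $\ord_\gamma(x_1)=1$, the power series $\phi:=\gamma^*(x_1)\in\CC[[t]]$ has $\ord_t\phi=1$ and is therefore a uniformizer, so that $\CC[[\phi]]=\CC[[t]]$. First I would use this to extract the $P_i$: writing $\gamma^*$ also for the induced $\CC$-algebra map $\widehat{\mathcal{O}}_{X,p}\simeq\CC[[x_1,\ldots,x_n]]\to\CC[[t]]$, each $\gamma^*(x_i)$ lies in $\CC[[t]]=\CC[[\phi]]$, so there is a unique $P_i\in\CC[[T]]$ with $\gamma^*(x_i)=P_i(\phi)=P_i(\gamma^*(x_1))$; and since $\gamma(o)=p$ the homomorphism $\gamma^*$ is local, so $\gamma^*(x_i)\in(t)\CC[[t]]$, which forces $P_i(0)=0$, i.e. $P_i\in(t)\CC[[t]]$. (Set $P_1(t)=t$.)

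For the order formula, I would use that $\widehat\gamma^*$ is the unique continuous extension of $\gamma^*$, and that since each $\gamma^*(x_i)$ lies in the maximal ideal of $\CC[[t]]$ the substitution $\widehat\gamma^*(\psi)=\psi(\gamma^*(x_1),\ldots,\gamma^*(x_n))=\psi(\phi,P_2(\phi),\ldots,P_n(\phi))$ is legitimate for every $\psi\in\CC[[x_1,\ldots,x_n]]$. Now $\psi(\phi,P_2(\phi),\ldots,P_n(\phi))=\tau\bigl(\psi(t,P_2(t),\ldots,P_n(t))\bigr)$, where $\tau$ is the $\CC$-algebra endomorphism of $\CC[[t]]$ sending $t\mapsto\phi$; because $\phi$ is a uniformizer, $\tau$ is an automorphism (surjective onto $\CC[[\phi]]=\CC[[t]]$, and injective as $\CC[[t]]$ is a domain), and any $\CC$-algebra automorphism of $\CC[[t]]$ preserves the maximal ideal and hence $\ord_t$. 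Therefore $\ord_\gamma(\psi)=\ord_t\widehat\gamma^*(\psi)=\ord_t\psi(t,P_2(t),\ldots,P_n(t))$, as asserted.

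Alternatively, using the machinery just developed, the whole statement is the $r=0$ instance of Proposition \ref{val-type}: nonsingularity of $\gamma$ gives $\min_{f\in\mathfrak{m}_p}v(f)=1=v(x_1)\le v(x_i)$, so the Hamburger--Noether procedure of Section \ref{hne-section} requires no iteration and no blowup. Then Equation \ref{hne-eqn3} already holds at $p$, Remark \ref{eqn4} supplies $\gamma^*(x_i)=P_i(\gamma^*(x_1))$, and Lemma \ref{formula} --- whose proof uses only $v(x_1)=1$, here available by hypothesis rather than by normalization --- gives the order formula.

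I do not expect a serious obstacle here: the proposition is a soft consequence of $\gamma^*(x_1)$ being a uniformizer. The only points needing (minor) care are the legitimacy of substituting the elements $\gamma^*(x_i)\in(t)\CC[[t]]$ into an arbitrary power series $\psi$ --- fine precisely because they lie in the maximal ideal and $\widehat\gamma^*$ is continuous --- and the fact that a $\CC$-algebra automorphism of $\CC[[t]]$ preserves $\ord_t$. Note that no hypothesis on $\CC$ (algebraic closedness or characteristic zero) enters, consistent with the statement being over an arbitrary field.
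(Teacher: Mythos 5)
Your proposal is correct, and your primary argument takes a genuinely different (and more self-contained) route than the paper. The paper proves this proposition by invoking the Hamburger--Noether machinery of Section \ref{hne-section}: since $\ord_\gamma(x_1)=1$ no iteration can occur, so Equation \ref{hne-eqn3} holds with $r=0$, Remark \ref{eqn4} gives $\gamma^*(x_i)=P_i(\gamma^*(x_1))$ with the $P_i$ of Definition \ref{defn-of-P}, and Lemma \ref{formula} yields the order formula --- exactly your ``alternative'' paragraph, including your (accurate) observation that the normalization hypothesis in Lemma \ref{formula} is only used to get $v(x_1)=1$, which here holds by assumption. Your main argument instead bypasses the HNE discussion entirely: $\phi=\gamma^*(x_1)$ has $\ord_t\phi=1$, so $t\mapsto\phi$ is an $\ord_t$-preserving $\CC$-algebra automorphism of $\CC[[t]]$ and $\CC[[\phi]]=\CC[[t]]$, which simultaneously produces the $P_i$ (uniquely) and gives $\gamma^*(\psi)=\tau\bigl(\psi(t,P_2(t),\ldots,P_n(t))\bigr)$, hence the order formula. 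This is essentially the reparametrization idea the paper only deploys later, in the proof of Lemma \ref{key-lemma} (where the equality $\CC[[\phi]]=\CC[[t]]$ is proved by the same inductive coefficient-solving you are implicitly relying on); spelling out that inversion, or citing it, would make your uniformizer claim airtight. What each approach buys: yours is shorter and independent of Section \ref{hne-section}, while the paper's reuses the HNE apparatus that is needed anyway for the general (singular) case in Proposition \ref{val-type}, of which this proposition is then just the $r=0$ instance. Both arguments, as you note, work over an arbitrary field. The only nitpick is your phrase ``no iteration and no blowup'': the HNE derivation of the coefficients $c_{i,k}$ still proceeds through the infinite tower of blowups; what nonsingularity buys is that no iteration occurs and the coordinates never change, i.e.\ $r=0$, not that the blowups are absent from the construction.
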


\begin{proof}
Since $\ord_\gamma(x_1)=1$, there can be no iterations in the Hamburger-Noether algorithm for $v=\ord_\gamma$. Hence Equation \ref{hne-eqn3} holds, and in particular, Remark \ref{eqn4} applies. That is, if the $P_i(t)$ for $2 \leq i \leq n$ are as in Definition \ref{defn-of-P}, we have $\ord_\gamma(x_i-P_i(x_1))=\infty$ for $2 \leq i \leq n$. So $\gamma^*(x_i-P_i(x_1))=0$, and therefore $\gamma^*(x_i)=\gamma^*(P_i(x_1))=P_i(\gamma^*(x_1))$ for $2 \leq i \leq n$. According to Lemma \ref{formula}, for every $\psi=\psi(x_1, \ldots, x_n) \in \widehat{\mathcal{O}}_{X, p} \simeq \CC[[x_1,\ldots, x_n]]$, we have $$\ord_\gamma(\psi) = \ord_t\psi(t,P_2(t), \ldots, P_n(t)).$$
\end{proof}

We will see in the next section that for a nonsingular $\CC$-valued arc $\gamma$, one can explicitly compute the ideals of $\bigcap_{q \geq 1}\overline{\mu_{q \infty}(\Cont^{\geq 1}(E_q))}$ and $\bigcap_{q \geq 1} \Cont^{\geq q}(\mathfrak{a}_q)$, where $\mathfrak{a}_q = \{ f \in \widehat{\mathcal{O}}_{X, \gamma(o)} \mid \ord_\gamma(f) \geq q\}$. We will see that these ideals are the same, and thus these two sets are equal.

\section{Main results:  $\CC$-arc valuations}\label{main}
\subsection{Introduction}
In this section, we present the main results of the paper. Let $X$ be a nonsingular variety of dimension $n$ ($n \geq 2$) over a field $\CC$. Let $v$ be a normalized $\CC$-arc valuation on $X$. We associate to $v$ several different subsets of the arc space $X_\infty$. In notation we will explain later, these subsets are $C(v)$, $\bigcap_{q \geq 1}\mu_{q \infty}(\Cont^{\geq 1}(E_q))$, $ \bigcap_{q \geq 1} \Cont^{\geq q}(\mathfrak{a}_q)$ and $\pi^{-1}(\alpha(o)) \cap Y_\infty$, and $R=\{a \circ h \in X_\infty \mid h:\Spec \CC[[t]] \to \Spec \CC[[t]]\}$. Our main result is that these five subsets are all equal. We first analyze the case when $v$ is a nonsingular arc valuation (Definition \ref{nonsingular}). We then consider the general case where we drop the hypothesis of nonsingularity. 

\subsection{Setup}
For the remainder of this paper, we fix the following notation. Let $X$ be a nonsingular variety of dimension $n$ ($n \geq 2$) over a field $\CC$. Let $\alpha:\Spec \CC[[t]] \to X$ be a normalized arc valuation on $X$ (see Definition \ref{normalized}). Set $v = \ord_\alpha$. 

In Definition \ref{divisor-limit}, we defined the sequence of centers of a $\CC$-arc valuation. To set notation for the rest of this section, we recall this definition.

\begin{definition}[Sequences of centers of a $\CC$-arc valuation]\label{divisor-limit2}
Let $X$ be a nonsingular variety over a field $\CC$.  Let $\alpha:\Spec \CC[[t]] \to X$ be an arc on $X$. Assume $\alpha$ is not the trivial arc (Definition \ref{zero-arcs}). Set $p_0=\alpha(o)$ (where $o$ is the closed point of $\Spec \CC[[t]]$) and $v=\ord_\alpha$. By Proposition \ref{closed-point}, the point $p_0$ is a closed point (with residue field $\CC$) of $X$. The point $p_0$ is called the \textit{center} of $v$ on $X_0:=X$. Blowup $p_0$ to get a model $X_1$ with exceptional divisor $E_1$. By Lemma \ref{lemma2} the arc $\alpha$ has a unique lift to an arc $\alpha_1:\Spec \CC[[t]] \to X_1$. Let $p_1$ be the closed point $\alpha_1(o)$. Inductively define a sequence of closed points $p_i$ and exceptional divisors $E_i$ on models $X_i$ and lifts $\alpha_i:\Spec\CC[[t]] \to X_i$ of $\alpha$ as follows. Blowup $p_{i-1} \in X_{i-1}$, to get a model $X_i$. Let $E_i$ be the exceptional divisor of this blowup. Let $\alpha_i:\Spec \CC[[t]] \to X_i$ be the lift of $\alpha_{i-1}:\Spec \CC[[t]] \to X_{i-1}$. Let $p_i$ be the closed point $\alpha_i(o)$. Let $\mu_i:X_i \to X$ be the composition of the first $i$ blowups. We call $\{p_i\}_{i \geq 0}$ the \textit{sequence of centers} of $v$. 
\end{definition}


\subsection{Simplified situation}\label{simplified-section}

We first consider the special case when the arc $\alpha:\Spec \CC[[t]] \to X$  is nonsingular (Definition \ref{nonsingular}). 

\begin{proposition}\label{base-case-cyl2}
Let $X$ be a nonsingular variety of dimension $n$ ($n \geq 2$) over a field $\CC$. Let $\alpha:\Spec \CC[[t]] \to X$ a nonsingular arc (Definition \ref{nonsingular}). 
Set $v=\ord_\alpha$ and $p_0=\alpha(o)$. 
Let $C=\bigcap_{q \geq 1} \mu_{q  \infty}(\Cont^{\geq 1}(E_q))$. Then 

\begin{enumerate}
\item $C$ is an irreducible subset of $X_\infty.$
\item Let $\mathfrak{a}_q = \{f \in \widehat{\mathcal{O}}_{X, p_0} \mid v(f) \geq q\}$. Then $C=\bigcap_{q \geq 1} \Cont^{\geq q}(\mathfrak{a}_q).$
\item $\val_C=v$ on $\widehat{\mathcal{O}}_{X, p_0}$.
\end{enumerate}
\end{proposition}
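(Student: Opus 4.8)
The plan is to reduce everything to the explicit description of $v$ provided by Proposition \ref{nonsingular-arc-thm}. Fix local algebraic coordinates $x_1, \ldots, x_n$ at $p_0$ with $\ord_\alpha(x_1)=1$, so that $\alpha^*(x_i)=P_i(\alpha^*(x_1))$ for $2 \leq i \leq n$ with $P_i \in (t)\CC[[t]]$, and $v(\psi)=\ord_t \psi(t, P_2(t), \ldots, P_n(t))$ for all $\psi \in \widehat{\mathcal{O}}_{X,p_0}$. I would first compute the valuation ideals $\mathfrak{a}_q$ directly: since $v(x_1)=1$, the ideal $\mathfrak{a}_q$ is generated (in the completion) by $x_1^q$ together with the elements $x_i - \sum_{k=1}^{q-1} c_{i,k} x_1^k$ for $2 \leq i \leq n$ (one checks $\supseteq$ from the formula for $v$ and $\subseteq$ by subtracting off the truncated power series $P_i$ and using that anything of $v$-value $\geq q$ lies in this ideal). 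Then $\Cont^{\geq q}(\mathfrak{a}_q)$ is cut out inside the arc space by the conditions $\ord_t \gamma^*(x_1) \geq q$ and $\ord_t\big(\gamma^*(x_i) - \sum_{k<q} c_{i,k}\gamma^*(x_1)^k\big) \geq q$. Taking the intersection over all $q$, an arc $\gamma$ lies in $\bigcap_q \Cont^{\geq q}(\mathfrak{a}_q)$ iff $\gamma^*(x_1) = 0$ and $\gamma^*(x_i) = 0$ for all $i$ — i.e.\ $\gamma$ is the trivial arc at $p_0$ — \emph{or} $\gamma^*(x_1)\neq 0$, say of order $m\geq 1$, in which case the second family of conditions forces $\gamma^*(x_i)=P_i(\gamma^*(x_1))$ exactly. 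So set-theoretically $\bigcap_q \Cont^{\geq q}(\mathfrak{a}_q)$ is the closure of $\{\gamma : \gamma^*(x_1)\neq 0,\ \gamma^*(x_i)=P_i(\gamma^*(x_1))\}$, which is the image of the map $\Spec\CC[[t]]$-family parametrized by $\gamma^*(x_1) \in (t)\CC[[t]]$; this locus is the homomorphic image of an affine space (the coordinate $\gamma^*(x_1)$ ranges over a product of affine lines given by its Taylor coefficients), hence irreducible, giving (1) and the identification in (2) modulo matching with the $C_q$ side.

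For the equality $C = \bigcap_q \Cont^{\geq q}(\mathfrak{a}_q)$ with $C=\bigcap_q \mu_{q\infty}(\Cont^{\geq 1}(E_q))$, I would argue inclusion-by-inclusion using Lemma \ref{lemma2}. For $C_q = \mu_{q\infty}(\Cont^{\geq 1}(E_q))$: an arc $\gamma$ (not the trivial arc at $p_0$) lifts uniquely through the tower of blowups $X_q \to \cdots \to X_1 \to X$, and $\gamma \in C_q$ means the lift $\gamma_{q-1}$ to $X_{q-1}$ has center $p_{q-1}$. Tracking coordinates through the blowups via the Hamburger--Noether equations (Lemma \ref{basis-of-hne} and Equation \ref{hne-eqn}), the condition "$\gamma_{q-1}$ is centered at $p_{q-1}$" translates into precisely the congruences $\ord_t \gamma^*(x_1) \geq$ (something growing with $q$) and $\ord_t(\gamma^*(x_i) - \sum_{k} c_{i,k}\gamma^*(x_1)^k)$ large — the same conditions defining $\Cont^{\geq q'}(\mathfrak{a}_{q'})$ for a comparable $q'$. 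Since both families are cofinal decreasing sequences of cylinders, their intersections agree. The cleanest route is: show $C_q \subseteq \Cont^{\geq q}(\mathfrak{a}_q)$ for every $q$ (any $\gamma$ centered at $p_{q-1}$ after $q-1$ blowups has $v$-value $\geq q$ on every element of $\mathfrak{a}_q$, because $\mathfrak{a}_q \cdot \widehat{\mathcal{O}}_{X_{q-1},p_{q-1}}$ is contained in the maximal ideal to an appropriate power — indeed in $\mathfrak m_{p_{q-1}}$ when $v(x_1)=1$), and conversely that $\Cont^{\geq N(q)}(\mathfrak{a}_{N(q)}) \subseteq C_q$ for a suitable $N(q)$, by checking that having high enough contact with $\mathfrak{a}_{N(q)}$ forces the lift to $X_{q-1}$ to pass through $p_{q-1}$ (again via the explicit coordinates). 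The bookkeeping here is the part requiring care.

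Finally, for part (3), $\val_C = v$: by Proposition \ref{gen-pt-cyl}, $\val_C = \ord_\beta$ where $\beta$ is the generic point of the irreducible set $C$. From the parametrization above, the generic point of $C$ is the arc over the field $K = \CC(\{$coefficients of $\gamma^*(x_1)\})$ given by $x_1 \mapsto \xi := \sum_{j\geq 1} u_j t^j$ (generic coefficients $u_j$, with $u_1\neq 0$ since $\ord_t \geq 1$ generically means $=1$ on the dense open piece... more precisely $\ord_t \xi$ is generically $1$) and $x_i \mapsto P_i(\xi)$. Then for $\psi \in \widehat{\mathcal{O}}_{X,p_0}$, $\ord_\beta(\psi) = \ord_t \psi(\xi, P_2(\xi), \ldots, P_n(\xi))$; substituting the change of parameter $t \leftrightarrow \xi$ (valid since $\ord_t \xi = 1$ generically, so $\xi$ is a uniformizer of $K[[t]]$) gives $\ord_t \psi(t, P_2(t), \ldots, P_n(t)) = v(\psi)$ by Proposition \ref{nonsingular-arc-thm}. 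Hence $\val_C = v$ on $\widehat{\mathcal{O}}_{X,p_0}$, as claimed.

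The main obstacle I anticipate is the second paragraph: precisely matching the geometric conditions defining the $C_q$ (center of the lift on $X_{q-1}$ equals $p_{q-1}$) with the algebraic contact conditions defining $\Cont^{\geq q}(\mathfrak{a}_q)$, i.e.\ finding the right cofinal reindexing $q \mapsto N(q)$ and verifying it honestly using the Hamburger--Noether coordinate changes. The irreducibility (1) and the valuation computation (3) are comparatively routine once the explicit parametrization of $C$ is in hand.
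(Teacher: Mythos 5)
The weak link is exactly the step you flagged, and it is not just bookkeeping: the proposed inclusion $\Cont^{\geq N(q)}(\mathfrak{a}_{N(q)})\subseteq \mu_{q\infty}(\Cont^{\geq 1}(E_q))$ fails for every choice of $N(q)$, so no cofinal reindexing exists. Concretely, take $q=2$ and any $N\geq 2$, and let $\gamma$ be the arc with $\gamma^*(x_1)=0$, $\gamma^*(x_2)=t^N$, $\gamma^*(x_i)=0$ for $i\geq 3$. Since $\mathfrak{a}_N=(x_1^N,\,x_i-\sum_{k<N}c_{i,k}x_1^k)$, one checks $\ord_\gamma(\mathfrak{a}_N)=N$, so $\gamma\in\Cont^{\geq N}(\mathfrak{a}_N)$; but the unique lift of $\gamma$ to $X_1$ is centered at the point of $E_1$ corresponding to the $x_2$-direction $[0:1:0:\cdots:0]$, whereas $p_1=[1:c_{2,1}:\cdots:c_{n,1}]$ because $\ord_\alpha(x_1)=1$, so $\gamma\notin\mu_{2\infty}(\Cont^{\geq 1}(E_2))$, hence not in $\mu_{q\infty}(\Cont^{\geq 1}(E_q))$ for any $q\geq 2$. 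The point is that high contact with the valuation ideals only pins down $\gamma^*(x_i)$ modulo a high power of $t$ and does not prevent $\gamma^*(x_1)=0$, and such arcs can lift through the wrong point of $E_1$. (A smaller imprecision: for the easy inclusion $\mu_{q\infty}(\Cont^{\geq 1}(E_q))\subseteq\Cont^{\geq q}(\mathfrak{a}_q)$ you need $\mathfrak{a}_q\cdot\widehat{\mathcal{O}}_{X_{q-1},p_{q-1}}\subseteq\mathfrak{m}_{p_{q-1}}^{q}$, not merely $\subseteq\mathfrak{m}_{p_{q-1}}$; this does hold, since $x_1^q$ and $z_i=x_1^{q-1}\cdot w_i$ with $w_i$ a coordinate at $p_{q-1}$ lie in $\mathfrak{m}_{p_{q-1}}^{q}$.)

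The comparison has to be made either between closures term by term or directly at the level of the full intersections, which is how the paper proceeds and which your first paragraph already makes possible. Working in $S=\CC[\{x_{i,j}\}]$, one shows that $\overline{\mu_{q\infty}(\Cont^{\geq 1}(E_q))}$ and $\Cont^{\geq q}(\mathfrak{a}_q)$ are both equal to $V(I_q)$ for the same prime ideal $I_q=(x_{i,j}-f_{i,j}(x_{1,1},\ldots,x_{1,j}))_{2\leq i\leq n,\ j\leq q-1}$; then $\bigcap_q\Cont^{\geq q}(\mathfrak{a}_q)=V(I)$ with $I=\bigcup_q I_q$, and to place $V(I)$ inside each (non-closed) image $\mu_{q\infty}(\Cont^{\geq 1}(E_q))$ one uses that a point of every $V(I_q)$ satisfies $\gamma^*(x_i)=P_i(\gamma^*(x_1))$ exactly, so either $\gamma$ is the trivial arc or $\ord_\gamma\bigl(x_i-\sum_{k<q}c_{i,k}x_1^k\bigr)\geq q\,\ord_\gamma(x_1)\geq(q-1)\ord_\gamma(x_1)+1$, which by the coordinate description at $p_{q-1}$ forces the lift through $p_{q-1}$. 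This is precisely where your counterexample arcs get excluded: they satisfy finitely many of the equations but not all of them. Your treatment of the valuation ideals, the irreducibility of $V(I)$, and part (3) are essentially the paper's arguments (the paper bounds $\val_C$ above by exhibiting an explicit arc of $C$ and below via the contact loci, while your reparametrization at the generic point is a fine alternative once the description of $C$ is in place), so the fix is within reach, but as written the reduction in your second paragraph is a genuine gap.
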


\begin{notation}\label{mess}
Let $\mathfrak{m}$ be the maximal ideal of $\mathcal{O}_{X, p_0}$. Since $\alpha$ is nonsingular, there exists $x_1 \in \mathfrak{m}$ such that $\ord_\alpha(x_1)=1$. Since $\ord_\alpha(x_1)=1$, we have $x_1 \in\mathfrak{m}\setminus\mathfrak{m}^2$. Choose $x_2, \ldots, x_n$ in $\mathfrak{m}$ so that $x_1, \ldots, x_n$ are local algebraic coordinates at $p_0$ (i.e. generators of $\mathfrak{m}$). Apply Proposition \ref{nonsingular-arc-thm} to the nonsingular arc $\alpha$ to get  $P_i(t) \in (t)\CC[[t]]$, for $2 \leq i \leq n$, such that $\alpha^*(x_i)=P_i(\alpha^*(x_1))$ and for every $\psi(x_1, \ldots, x_n) \in \widehat{\mathcal{O}}_{X, p_0}\simeq\CC[[x_1, \ldots, x_n]]$, we have 
\begin{equation}\label{calc-valn}
v(\psi) =\ord_t\psi(t,P_2(t), \ldots, P_n(t)).
\end{equation}

Write $P_i(t) =\displaystyle \sum_{j \geq 1}c_{i,j}t^j \in (t)\CC[[t]]$ for $2 \leq i\leq n$ and $c_{i,j} \in \CC$.  For $2 \leq i \leq n$, we have 
\begin{align}
\alpha^*(x_i)&=P_i(\alpha^*(x_1)) \notag \\
&=\displaystyle \sum_{j \geq 1}c_{i,j}(\alpha^*(x_1))^j \label{positive-value}
\end{align}
\end{notation}

We break up the proof of Proposition \ref{base-case-cyl2} into several steps. For the remainder of this section,  $v$, $x_1, \ldots, x_n$, $P_2(t), \ldots, P_n(t)$ and $c_{i,j}$ are as in Proposition \ref{base-case-cyl2} and Notation \ref{mess}.

\begin{lemma}\label{local-coords}
With the notation in Definition \ref{divisor-limit2}, Proposition \ref{base-case-cyl2}, and Notation \ref{mess}, the functions $x_1$ and $\frac{x_i-c_{i,1}x_1-c_{i,2}x^2_1  \cdots - c_{i,q-1}x^{q-1}_1}{x^{q-1}_1} \in \CC(X)$ for $2 \leq i\leq n$ form local algebraic coordinates on $X_{q-1}$ centered at $p_{q-1}$.
\end{lemma}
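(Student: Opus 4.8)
The plan is to prove this by induction on $q$, tracking local coordinates through each blowup using Lemma \ref{lemma2}\eqref{lem2-i-2} (equivalently Lemma \ref{basis-of-hne}). The base case $q=1$ is trivial: $x_1, \ldots, x_n$ are the given local algebraic coordinates at $p_0$ on $X_0 = X$, and the claimed functions reduce to $x_1, \ldots, x_n$ (the sum in the numerator is empty and the denominator is $1$). For the inductive step, suppose that $x_1$ and $w_i^{(q-1)} := \frac{x_i - c_{i,1}x_1 - \cdots - c_{i,q-1}x_1^{q-1}}{x_1^{q-1}}$ for $2 \le i \le n$ are local algebraic coordinates at $p_{q-1}$ on $X_{q-1}$. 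I need to compute the coordinates at $p_q$ after blowing up $p_{q-1}$.

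First I would check that among these coordinates $x_1$ has the smallest $v$-value: since $\alpha$ is nonsingular, $v(x_1) = 1$, and by Equation \eqref{calc-valn}, $v(w_i^{(q-1)}) = \ord_t\big(P_i(t) - c_{i,1}t - \cdots - c_{i,q-1}t^{q-1}\big) - (q-1) = \big(\ord_t(\sum_{j \ge q} c_{i,j}t^j)\big) - (q-1) \ge 1 = v(x_1)$, with the value being $\ge 1$ precisely because the numerator's lowest-degree term has degree $\ge q$. So the hypothesis of Lemma \ref{lemma2}\eqref{lem2-i-2} is met with $x_1$ in the role of the coordinate of minimal value. That lemma then tells us the coordinates at $p_q = \gamma_q(o)$ are $x_1$ together with $\frac{w_i^{(q-1)}}{x_1} - c_i'$ for suitable constants $c_i' \in \CC$, and I must identify $c_i'$. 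Now $\frac{w_i^{(q-1)}}{x_1} = \frac{x_i - c_{i,1}x_1 - \cdots - c_{i,q-1}x_1^{q-1}}{x_1^q}$, and applying $\alpha^*$ (using $\alpha^*(x_i) = P_i(\alpha^*(x_1))$ and writing $s = \alpha^*(x_1)$, which has $\ord_t s = 1$) gives $\frac{P_i(s) - c_{i,1}s - \cdots - c_{i,q-1}s^{q-1}}{s^q} = c_{i,q} + (\text{higher order in } s)$, so its $t$-order-zero part — which is exactly the constant $c_i'$ that must be subtracted to keep the point $p_q$ at the center — equals $c_{i,q}$. Substituting $c_i' = c_{i,q}$ yields $\frac{w_i^{(q-1)}}{x_1} - c_{i,q} = \frac{x_i - c_{i,1}x_1 - \cdots - c_{i,q-1}x_1^{q-1} - c_{i,q}x_1^q}{x_1^q} = w_i^{(q)}$, completing the induction.

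The main obstacle, and the step deserving the most care, is pinning down the constants $c_i'$ in the coordinate change coming from Lemma \ref{lemma2}\eqref{lem2-i-2}: one must verify that the constant to be subtracted is genuinely the degree-zero coefficient in the $t$-expansion obtained by pulling back along $\alpha$, i.e. that "being a local coordinate centered at $p_q$" forces exactly this normalization. This is where the nonsingularity of $\alpha$ (giving $\ord_t \alpha^*(x_1) = 1$, hence that $s$ is a genuine uniformizer and the expansion $\frac{P_i(s) - \sum_{j<q} c_{i,j}s^j}{s^q}$ is a well-defined power series in $s$ with constant term $c_{i,q}$) is essential; without it the denominators $x_1^{q-1}$ would not interact cleanly with the blowup charts. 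A secondary technical point is confirming that $x_1, w_2^{(q-1)}, \ldots, w_n^{(q-1)}$ really generate the maximal ideal of $\mathcal{O}_{X_{q-1}, p_{q-1}}$ (not merely lie in it) — but this is exactly the content of Lemma \ref{basis-of-hne} applied inductively, so it comes for free once the value-ordering hypothesis is checked.
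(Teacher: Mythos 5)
Your proof is correct, but it is organized differently from the one in the paper. The paper's own proof is a short direct argument: it observes that the $n$ functions $x_1$ and $\frac{x_i-c_{i,1}x_1-\cdots-c_{i,q-1}x_1^{q-1}}{x_1^{q-1}}$ have positive value under $\ord_{\alpha_{q-1}}$, hence lie in the maximal ideal of the $n$-dimensional regular local ring $\mathcal{O}_{X_{q-1},p_{q-1}}$, and that the ideal $\mathfrak{n}$ they generate satisfies $\mathcal{O}_{X_{q-1},p_{q-1}}/\mathfrak{n}\simeq \CC$, so $\mathfrak{n}$ is the maximal ideal; no induction on $q$ appears. You instead run an induction through the blowups, applying Lemma \ref{lemma2}\eqref{lem2-i-2} (equivalently Lemma \ref{basis-of-hne}) at each stage, checking via Equation \ref{calc-valn} that $x_1$ retains minimal value (this is exactly the ``no iterations'' phenomenon for a nonsingular arc noted in the proof of Proposition \ref{nonsingular-arc-thm}), and pinning down the constant to subtract as $c_{i,q}$ by using $\alpha^*(x_i)=P_i(\alpha^*(x_1))$ together with the fact that $\alpha_q^*$ is a local homomorphism, so a local coordinate at $p_q$ must have vanishing constant term in $t$. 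In effect you re-run the Hamburger--Noether bookkeeping of Section \ref{hne-section} in this special case. What your route buys is completeness: it proves, rather than assumes, that these rational functions are regular at $p_{q-1}$ and that the quotient by the ideal they generate is $\CC$ --- the two points the paper's one-liner leaves implicit. What the paper's route buys is brevity, since once those facts are granted, the conclusion is immediate from the fact that $n$ elements of the maximal ideal of an $n$-dimensional regular local ring generating an ideal with residue field $\CC$ must generate the maximal ideal itself.
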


\begin{proof}
These $n$ functions are elements of positive value under $\ord_{\alpha_q}$ (by Equation \ref{positive-value}), and hence lie in the maximal ideal of the $n$-dimensional regular local ring $\mathcal{O}_{X_{q-1}, p_{q-1}}$. The ideal $\mathfrak{n} \subseteq \mathcal{O}_{X_{q-1},p_{q-1}}$ they generate satisfies $\mathcal{O}_{X_{q-1}, p_{q-1}} /\mathfrak{n} \simeq \CC$, and hence $\mathfrak{n}$ is a maximal ideal.
\end{proof}

\subsection{Reduction to $X=\mathbb{A}^n$}

We denote the affine line $\mathbb{A}^1_\CC=\Spec \CC[T]$ simply by $\mathbb{A}^1$. We show that we may reduce many computations about the arc space of the nonsingular $n$-dimensional variety $X$ to the case $X=\mathbb{A}^n$.  

\begin{proposition}\label{reduction}
Let $X$ be a nonsingular variety and $p \in X$. Let $\pi: X_\infty \to X$ be the canonical morphism sending an arc $\gamma$ to its center $\gamma(o)$. Then $\pi^{-1}(p) \simeq (\mathbb{A}_{\kappa(p)}^n)_\infty$, where $\kappa(p)$ is the residue field at $p \in X$. In particular, if $\kappa(p)=\CC$ then $\pi^{-1}(p) \simeq (\mathbb{A}^n)_\infty.$
\end{proposition}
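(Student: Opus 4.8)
The plan is to exploit the fact that the fiber $\pi^{-1}(p)$ is a local construction that sees only the complete local ring $\widehat{\mathcal{O}}_{X,p}$, together with the functorial description of arcs in Equation (\ref{arc-functor}). First I would reduce to the affine case: choosing an affine open neighborhood $U = \Spec R$ of $p$ in $X$, the inclusion $U \hookrightarrow X$ induces an open immersion $U_\infty \hookrightarrow X_\infty$, and since $\pi^{-1}(p)$ consists of arcs whose center is $p \in U$, we have $\pi^{-1}(p) = \pi_U^{-1}(p)$ where $\pi_U : U_\infty \to U$ is the analogous map. So we may assume $X = \Spec R$ affine, with $p$ corresponding to a maximal ideal $\mathfrak{m}$ with residue field $\kappa(p)$ (which by Proposition \ref{closed-point} is $\CC$ when we are looking at $\CC$-arcs, but we argue for a general residue field here).

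Next I would identify the functor of points of $\pi^{-1}(p)$. For a field extension $\CC \subseteq K$, a $K$-point of $\pi^{-1}(p)$ is a $\CC$-algebra map $\phi : R \to K[[t]]$ such that the composite $R \to K[[t]] \to K$ kills $\mathfrak{m}$, i.e. factors through $\kappa(p) \to K$; equivalently (since $K[[t]]$ is $(t)$-adically complete and separated) $\phi$ extends uniquely to $\widehat{R}_{\mathfrak{m}} = \widehat{\mathcal{O}}_{X,p} \to K[[t]]$ sending $\mathfrak{m}\widehat{R}_{\mathfrak{m}}$ into $(t)K[[t]]$. Since $X$ is nonsingular of dimension $n$ at $p$, we have $\widehat{\mathcal{O}}_{X,p} \simeq \kappa(p)[[x_1,\ldots,x_n]]$, and such a continuous $\kappa(p)$-algebra map is the same as an arbitrary choice of $n$ elements of $(t)K[[t]]$, namely the images of $x_1,\ldots,x_n$. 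On the other side, a $K$-point of $(\mathbb{A}^n_{\kappa(p)})_\infty$ is, by Equation (\ref{arc-functor}) applied over the base $\kappa(p)$, a $\kappa(p)$-algebra map $\kappa(p)[x_1,\ldots,x_n] \to K[[t]]$, i.e. again an arbitrary choice of $n$ elements of $K[[t]]$ — and the condition of being centered at the origin cuts this down to $n$ elements of $(t)K[[t]]$. These two functors on $\kappa(p)$-algebras are therefore naturally isomorphic; one must check the isomorphism is compatible with the $\CC$-scheme (resp. $\kappa(p)$-scheme) structure, but since everything is built from the same completed local ring the identification is canonical. By Yoneda this gives $\pi^{-1}(p) \simeq (\mathbb{A}^n_{\kappa(p)})_\infty$ as schemes over $\kappa(p)$ (hence over $\CC$), and specializing to $\kappa(p) = \CC$ gives the last sentence.

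The main subtlety — the step I would be most careful about — is the passage from the naive completion $\widehat{\mathcal{O}}_{X,p}$ to an actual scheme-theoretic statement about $\pi^{-1}(p)$ as a scheme, not merely a set of $K$-points: one needs that $\pi^{-1}(p)$ really is represented by the functor described above, uniformly in $K$, and that the jet-level truncations $X_m \to X$ restrict over $p$ to $(\mathbb{A}^n_{\kappa(p)})_m$ compatibly with the transition maps, so that the identification passes to the inverse limit defining the arc space. This is essentially the statement that the construction of $X_\infty$ commutes with the étale-local (indeed formal-local) structure of $X$ at $p$, which follows from the standard construction of jet schemes in \cite[Section 2]{jet-bible} together with the fact that $\mathbb{A}^n$ and $X$ are formally isomorphic at $p$; I would cite this rather than reprove it. Once that is in hand, the rest is the bookkeeping identification of $n$-tuples in $(t)K[[t]]$ described above.
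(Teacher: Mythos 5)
Your argument takes a genuinely different route from the paper's: the paper chooses an \'etale morphism from an affine neighborhood of $p$ to $\mathbb{A}^n$ (this is where nonsingularity enters) and quotes the base-change property $U_\infty = U\times_{\mathbb{A}^n}(\mathbb{A}^n)_\infty$ for \'etale morphisms from \cite{jet-bible}, then simply takes fibers over $p$; everything stays scheme-theoretic and treats an arbitrary point $p$ uniformly. Your route through $\widehat{\mathcal{O}}_{X,p}$ works, with the caveats below, only when $p$ is a closed point, and cleanly only when $\kappa(p)=\CC$; since the proposition is stated for an arbitrary $p\in X$, this is a genuine gap. Concretely, for a non-closed point $p$ of codimension $c<n$ one has $\widehat{\mathcal{O}}_{X,p}\simeq\kappa(p)[[y_1,\ldots,y_c]]$, a power series ring in $c$ (not $n$) variables, so ``an arc centered at $p$ is an $n$-tuple in $(t)K[[t]]$'' is not correct. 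Worse, an arc is only a $\CC$-algebra map, and its restriction to a coefficient field $\kappa(p)\subseteq\widehat{\mathcal{O}}_{X,p}$ need not land in $K$: for $X=\Spec\CC[x,y]$, $p$ the generic point of $\{y=0\}$, and $K=\CC(s)$, the arc $x\mapsto s+t$, $y\mapsto 0$ is centered at $p$ but maps $\kappa(p)=\CC(x)$ into $K[[t]]$ nontrivially. The ``missing'' $n-c$ directions of the fiber are hidden in exactly this freedom, so the bookkeeping you describe does not compute the fiber at such points. (Even at a closed point with $\kappa(p)\neq\CC$, your implicit assertion that the extended map is $\kappa(p)$-linear for the induced embedding $\kappa(p)\to K$ needs a short Hensel-type argument; it is automatic only for $\kappa(p)=\CC$, which is, to be fair, the only case the paper uses afterwards.)

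Two smaller points in the case $\kappa(p)=\CC$, where your count is otherwise right. First, what your bijection identifies is the locus of arcs on $\mathbb{A}^n_{\kappa(p)}$ centered at the origin, i.e. $\Spec \kappa(p)[\{x_{i,j}\}_{j\geq 1}]$, rather than the full arc space $\Spec \kappa(p)[\{x_{i,j}\}_{j\geq 0}]$; one more (trivial) identification, say by reindexing, is needed to get the isomorphism as stated. This is consistent with how the paper later uses the result, writing $\pi^{-1}(p_0)=\Spec S$ with $S=\CC[\{x_{i,j}\}_{j\geq 1}]$, so it is harmless but should be said. Second, an isomorphism on $K$-valued points for field extensions $K$ does not by itself give an isomorphism of schemes; you flag this correctly, and the fix you sketch is sound in substance, but it should be carried out on $A$-valued points for arbitrary $\CC$-algebras $A$: an $A$-point of the fiber of the $m$-jet scheme over $p$ factors through $\mathcal{O}_{X,p}/\mathfrak{m}^{m+1}\to A[t]/(t^{m+1})$, so that fiber depends only on $\widehat{\mathcal{O}}_{X,p}$, and passing to the inverse limit over $m$ gives the statement for $X_\infty$, after which Yoneda applies. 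With those repairs your argument proves the case actually needed in the paper; the paper's \'etale-base-change proof remains the shorter way to get the full statement.
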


\begin{proof}
Since  $X$ is nonsingular, there exists an open affine neighborhood $U$ of $p$ and an \'etale morphism $\phi:U \to \Spec \CC[X_1, \ldots, X_n] = \mathbb{A}^n$  (\cite[Prop. 3.24b]{milne}). We will use the following fact (\cite[p.7]{jet-bible}): if $f:X \to Y$ is an \'etale morphism, then $X_\infty = X \times_Y Y_\infty$. Applied to the open inclusion $U \to X$, we have $U_\infty= U \times_X X_\infty$. Applied to the \'etale map $U \to \mathbb{A}^n$ we have $U_\infty=U \times_{\mathbb{A}^n} \mathbb{A}^n_\infty$. Hence we have 
$$ \pi^{-1}(U) = U \times_X X_\infty= U_\infty= U \times_{\mathbb{A}^n} \mathbb{A}^n_\infty.$$ Hence $$\pi^{-1}(p)=\Spec \kappa(p) \times_U \pi^{-1}(U) = \Spec \kappa(p) \times_{\mathbb{A}^n} (\mathbb{A}^n)_\infty=(\mathbb{A}_{\kappa(p)}^n)_\infty.$$
\end{proof}

We resume considering Proposition \ref{base-case-cyl2}, where now it is sufficient to assume $X=\mathbb{A}^n=\Spec \CC[x_1, \ldots, x_n]$, and the $\CC$-valued point $p_0$ corresponds to the maximal ideal $(x_1, \ldots, x_n)$.
We write $(\mathbb{A}^n)_\infty=(\Spec \CC[x_1, \ldots, x_n])_\infty=\Spec \CC[\{x_{i,j}\}_{1 \leq i \leq n,\ j \geq 0}]$, where the last equality comes from parametrizing arcs on $\Spec \CC[x_1, \ldots, x_n]$ by $x_i \to \sum_{j \geq 0} x_{i,j}t^j$ for $1\leq i\leq n$. 
Note that $\pi:X_\infty \to X$ (defined in Proposition \ref{reduction}) maps $C$ to $p_0$. Hence $$C \subseteq \pi^{-1}(p_0)=(\mathbb{A}^n)_\infty=\Spec S,$$ where 
\begin{equation}\label{S}
S=\CC[\{x_{i,j}\}_{1 \leq i \leq n,\ j \geq 1}] 
\end{equation} 

\begin{definition}\label{defn-of-f}
For $2\leq i\leq n$ and $q \geq 1$, let $f_{i,q}(X_1, \dots, X_q)$ be the polynomial that is the coefficient of $t^q$ in $$\sum _{j=1}^{q}c_{i,j}(X_1t + X_2t^2 + \cdots )^j.$$ (Recall that the $c_{i,j}$ were defined in Notation \ref{mess}).
\end{definition}

\begin{definition}\label{Iq}
For each positive integer $q$, let $I_q$ be the ideal of $S$ generated by
\begin{enumerate}
\item $x_{i,j} - f_{i,j}(x_{1,1}, \ldots, x_{1,j})$ for $2 \leq i \leq n$ and $1\leq j \leq q-1$.
\end{enumerate} 
Note that $I_q$ is a prime ideal of $S$, since $ S/I_q = \CC[\{x_{1,j}\}_{j \geq 1},\{x_{i,j}\}_{2 \leq i \leq n, q \leq j}]$.
\end{definition}

\begin{notation}
If $J$ is an ideal of $S$, we denote by $V(J)$ the closed subscheme of $\Spec S$ defined by the ideal $J$.
\end{notation}

\begin{definition}\label{I}
Let $I$ be the ideal of $S$ defined by $I = \bigcup_{q \geq 1}I_q$. Since $I$ is the ideal of $S$ generated by $x_{i,j} - f_{i,j}(x_{1,1}, \ldots, x_{1,j})$ for $2 \leq i \leq n$ and $1\leq j$, we have $S/I = \CC[\{x_{1,j}\}_{1 \leq j}]$. In particular, $I$ is a prime ideal of $S$.
\end{definition}

\begin{lemma}\label{ig-lemma1}
For each positive integer $q$, the ideal of $\overline{\mu_{q\infty}(\Cont^{\geq 1}(E_q))}$ in $S$ is $I_q$. (Note: $I_q$ is defined in Definition \ref{Iq}.)
\end{lemma}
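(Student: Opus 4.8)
The plan is to identify the set $\mu_{q\infty}(\Cont^{\geq 1}(E_q))$ explicitly as a subset of $\pi^{-1}(p_0) = (\mathbb{A}^n)_\infty = \Spec S$, and then show its closure is cut out by $I_q$. First I would recall from Lemma \ref{lemma2}\eqref{lem2-ii} (applied inductively along the tower $X_q \to X_{q-1} \to \cdots \to X_0 = X$) that $\mu_{q\infty}(\Cont^{\geq 1}(E_q))$ consists precisely of those arcs $\gamma$ on $X$ whose iterated lift $\gamma_{q-1}$ to $X_{q-1}$ exists (i.e.\ $\gamma$ is not too degenerate) and has center $p_{q-1} = \alpha_{q-1}(o)$, equivalently $\gamma_{q-1} \in \Cont^{\geq 1}(p_{q-1})$. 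By Lemma \ref{local-coords}, the point $p_{q-1}$ has local algebraic coordinates $x_1$ and $u_{i,q} := \frac{x_i - c_{i,1}x_1 - \cdots - c_{i,q-1}x_1^{q-1}}{x_1^{q-1}}$ for $2 \leq i \leq n$. So the condition $\gamma_{q-1} \in \Cont^{\geq 1}(p_{q-1})$ translates, on an arc $\gamma$ with $\gamma^*(x_i) = \sum_{j \geq 1} x_{i,j}t^j$, into the requirement $\ord_t \gamma^*(x_1) \geq 1$ (automatic on $\pi^{-1}(p_0)$) and $\ord_t \gamma^*(u_{i,q}) \geq 1$ for each $i$.

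Next I would unwind $\ord_t\gamma^*(u_{i,q}) \geq 1$ into polynomial equations in the coordinates $x_{i,j}$ of $S$. Writing $\gamma^*(x_1) = x_{1,1}t + x_{1,2}t^2 + \cdots$, the numerator $\gamma^*(x_i) - \sum_{k=1}^{q-1} c_{i,k}(\gamma^*(x_1))^k$ has $t$-order $\geq q$ exactly when its coefficients of $t^1, \ldots, t^{q-1}$ vanish; by Definition \ref{defn-of-f}, the coefficient of $t^j$ (for $1 \leq j \leq q-1$) in $\sum_{k=1}^{q-1} c_{i,k}(\gamma^*(x_1))^k$ is $f_{i,j}(x_{1,1}, \ldots, x_{1,j})$ (here one checks that the terms $c_{i,k}$ with $k > j$ contribute nothing to the $t^j$ coefficient, so summing to $q-1$ or to $j$ is the same), so these vanishing conditions are precisely $x_{i,j} = f_{i,j}(x_{1,1}, \ldots, x_{1,j})$ for $2 \leq i \leq n$, $1 \leq j \leq q-1$ — exactly the generators of $I_q$. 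I must also be careful about the division by $x_1^{q-1}$: on the locus $V(I_q)$ the generic point has $x_{1,1} \neq 0$, so $\gamma^*(x_1)$ has order exactly $1$ and dividing by $x_1^{q-1}$ only shifts $t$-order by $q-1$; thus over the dense open $\{x_{1,1} \neq 0\}$ of $\Spec S/I_q$ the lift $\gamma_{q-1}$ genuinely exists (the arc is not the trivial one in the relevant sense) and lands in $\Cont^{\geq 1}(p_{q-1})$.

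To finish, I would argue the two inclusions. One direction: every $K$-point of $\mu_{q\infty}(\Cont^{\geq 1}(E_q)) \cap \pi^{-1}(p_0)$ satisfies the equations of $I_q$, so $\mu_{q\infty}(\Cont^{\geq 1}(E_q)) \subseteq V(I_q)$ and hence $\overline{\mu_{q\infty}(\Cont^{\geq 1}(E_q))} \subseteq V(I_q)$. Conversely, since $I_q$ is prime (Definition \ref{Iq}), $V(I_q)$ is irreducible with generic point $\eta_q$; the arc corresponding to $\eta_q$ has $\gamma^*(x_1)$ of order exactly $1$ (as $x_{1,1}$ is a nonzero element of the residue field at $\eta_q$), so by the computation above its lift to $X_{q-1}$ exists and has center $p_{q-1}$, i.e.\ $\eta_q \in \mu_{q\infty}(\Cont^{\geq 1}(E_q))$; taking closures gives $V(I_q) = \overline{\{\eta_q\}} \subseteq \overline{\mu_{q\infty}(\Cont^{\geq 1}(E_q))}$. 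Combining, $\overline{\mu_{q\infty}(\Cont^{\geq 1}(E_q))} = V(I_q)$, which is the claim. The main obstacle I anticipate is the bookkeeping in the second paragraph: correctly matching the coefficient extraction in the quotient $u_{i,q}$ against the polynomials $f_{i,j}$ of Definition \ref{defn-of-f}, and handling the passage between "$\ord_t \geq q$ of the numerator" and "$\ord_t \geq 1$ after dividing by $x_1^{q-1}$" so that the trivial-arc exclusion in Lemma \ref{lemma2}\eqref{lem2-i} is respected; everything else is a routine translation between arcs and their coordinate descriptions.
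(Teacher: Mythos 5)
Your proposal is correct and follows essentially the same route as the paper: both reduce to showing $\overline{\mu_{q\infty}(\Cont^{\geq 1}(E_q))}=V(I_q)$, use the coordinates of Lemma \ref{local-coords} at $p_{q-1}$ to translate membership into the vanishing of the $t^1,\dots,t^{q-1}$ coefficients matched against the $f_{i,j}$ of Definition \ref{defn-of-f}, and prove the reverse inclusion by showing the generic point of $V(I_q)$ (where $x_{1,1}\neq 0$, so $\ord(x_1)=1$) lifts to an arc centered at $p_{q-1}$. The only cosmetic difference is that for the forward inclusion you check all $K$-points of the image (the trivial arc, which your lifting description omits, lies in $V(I_q)$ anyway since the $f_{i,j}$ have no constant term), whereas the paper pushes down the generic point of $\Cont^{\geq 1}(E_q)$; both are fine.
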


\begin{proof}
Note that $\overline{\mu_{q\infty}(\Cont^{\geq 1}(E_q))}$ is irreducible (e.g. \cite[p.9]{mustata}). Since $I_q$ is a prime ideal, we need to show $$\overline{\mu_{q\infty}(\Cont^{\geq 1}(E_q))} = V(I_q).$$

First we show $\overline{\mu_{q  \infty}(\Cont^{\geq 1}(E_q))} \subseteq V(I_q)$ by showing that the generic point of $\overline{\mu_{q\infty}(\Cont^{\geq 1}(E_q))}$ lies in $V(I_q)$. Suppose $\beta ' : \Spec K[[t]] \to X_q$ is the generic point of $\Cont^{\geq 1}(E_q)$. To be precise, $\beta '$ is the canonical arc (described in Remark \ref{point}) associated to the generic point of $\Cont^{\geq 1}(E_q)$. Also, $K$ is the residue field at the generic point of $\Cont^{\geq 1}(E_q)$. By Lemma \ref{lemma2} part \ref{lem2-ii}, the pushdown of $\beta '$ to $X_{q-1}$ is an arc $\beta:\Spec K[[t]] \to X_{q-1}$ that is the generic point of  $\Cont^{\geq 1}(p_{q-1})$. By the description of local coordinates at $p_{q-1}$ given in Lemma \ref{local-coords}, the arc $\beta$ corresponds (by Lemma \ref{lemma2}) to a map $x_1 \to x_{1,1}t + x_{1,2}t^2 + \cdots$ and $\frac{x_i-c_{i,1}x_1-c_{i,2}x^2_1  \cdots - c_{i,q-1}x^{q-1}_1}{x^{q-1}_1} \to a_{i,1}t + a_{i,2}t^2 + \cdots$ for $2 \leq i \leq n$ and some $a_{i, j} \in K$. The pushdown of $\beta$ to $X$ is the arc given by $x_1 \to x_{1,1}t + x_{1,2}t^2 + \cdots$ and 
$x_i \to \sum_{j=1}^{j=q-1}c_{i,j}(x_{1,1}t + x_{1,2}t^2 + \cdots)^j + r(t)$ where $r(t) \in (t^q) \subseteq K[[t]]$. In particular, the pushdown of $\beta '$ to $X$ corresponds to a prime ideal in $S$ containing the ideal $I_q$ of $S$ generated by  $x_{i,j}-f_{i,j}(x_{1,1}, \ldots, x_{1,j})$ for $1 \leq j \leq q-1$ and $2\leq i\leq n$. That is, the generic point of $\mu_{q  \infty}(\Cont^{\geq 1}(E_q))$  lies in $V(I_q)$. Hence $\overline{\mu_{q  \infty}(\Cont^{\geq 1}(E_q))} \subseteq V(I_q)$.

Conversely, we show that $\overline{\mu_{q\infty}(\Cont^{\geq 1}(E_q))} \supseteq V(I_q)$. The generators of $I_q$ listed in Definition \ref{Iq} show that the coordinate ring of $V(I_q)$ is $ S/I_q = \CC[\{x_{1,j}\}_{j \geq 1},\{x_{i,j}\}_{2 \leq i \leq n, q \leq j}]$. Let $\beta: \Spec K[[t]] \to X$ be the arc corresponding (see Remark \ref{point}) to the generic point of $V(I_q)$, where $K= \CC(\{x_{1,j}\}_{j \geq 1},\{x_{i,j}\}_{2 \leq i \leq n, q \leq j})$.  We have $\beta^*(x_1) = x_{1,1}t + x_{1,2}t^2 + \dots$. Since $I_q$ contains $x_{i,j}-f_{i,j}(x_{1, 1}, \ldots, x_{1, j})$ for $1 \leq j \leq q-1$ and $2 \leq i \leq n$, we have that $\beta^*(x_i) = \displaystyle\sum_{j \geq 1}^{q-1}f_{i,j}(x_{1,1}, \ldots, x_{1,j})t^j + t^qr_i(t)$ for some $r_i(t) \in K[[t]]$ and for each $2 \leq i\leq n$. Hence $$\beta^*(x_i)=\sum_{j \geq 1}^{q-1} c_{i,j}(\beta^*(x_1))^j+t^qs_i(t)$$ for some $s_i(t) \in K[[t]]$, by Definition \ref{defn-of-f}.

Therefore $$\ord_\beta(x_i-c_{i,1}x_1-c_{i,2}x^2_1  \cdots - c_{i,q-1}x^{q-1}_1) \geq q=\ord_\beta(x^{q-1}_1) + 1,$$ where the last equality follows from the fact $\ord_\beta(x_1)=1$ as $x_{1,1} \neq 0 \in K$. In particular, the unique lift of $\beta$ to an arc on $X_{q-1}$ has center $p_{q-1}$, by Lemma \ref{local-coords}. Hence $\beta \in \mu_{q-1 \infty}(\Cont^{\geq 1}(p_{q-1})) = \mu_{q\infty}(\Cont^{\geq 1}(E_q)).$ Hence $V(I_q)=\overline{\{\beta\}}\subseteq\overline{\mu_{q\infty}(\Cont^{\geq 1}(E_q))}$.
\end{proof}

\begin{lemma}\label{i-lemma1}
The ideal of $C$ in $S$ is $I$. (Note: $C$ is defined in Proposition \ref{base-case-cyl2}, $S$ is defined in Equation \ref{S}, and $I$ is defined in Definition \ref{I}.)
\end{lemma}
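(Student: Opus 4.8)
The plan is to deduce this from Lemma \ref{ig-lemma1} together with a direct description of the generic point of $V(I)$. Throughout, for a subset $D \subseteq \Spec S$ write $I(D)$ for the ideal of $S$ of functions vanishing on $D$, so the ``ideal of $C$ in $S$'' is $I(C) = I(\overline{C})$. First I would record two elementary facts: by Definition \ref{I} the ideals $I_q$ form an ascending chain with $\bigcup_{q \geq 1} I_q = I$, whence $\bigcap_{q \geq 1} V(I_q) = V(I)$; and by Lemma \ref{ig-lemma1}, $\overline{\mu_{q\infty}(\Cont^{\geq 1}(E_q))} = V(I_q)$ for every $q$. Since $C = \bigcap_{q \geq 1}\mu_{q\infty}(\Cont^{\geq 1}(E_q)) \subseteq \bigcap_{q \geq 1}\overline{\mu_{q\infty}(\Cont^{\geq 1}(E_q))} = \bigcap_{q \geq 1} V(I_q) = V(I)$, we get $\overline{C} \subseteq V(I)$, and in particular $I \subseteq I(C)$. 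This is one inclusion.

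For the reverse inclusion it is enough to prove $V(I) \subseteq \overline{C}$: combined with $\overline{C} \subseteq V(I)$ from the previous paragraph this gives $\overline{C} = V(I)$, so $I(C) = I(V(I)) = \sqrt{I} = I$, the last equality because $I$ is prime (Definition \ref{I}). As $I$ is prime, $V(I) = \Spec(S/I)$ is irreducible with generic point $\xi$; since $S/I = \CC[\{x_{1,j}\}_{j \geq 1}]$, the residue field at $\xi$ is $K = \CC(\{x_{1,j}\}_{j \geq 1})$, and by Remark \ref{point} there is a canonical $K$-arc $\beta : \Spec K[[t]] \to X$ representing $\xi$. It suffices to show $\beta \in C$, for then $V(I) = \overline{\{\xi\}} \subseteq \overline{C}$. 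To check $\beta \in C$ I would rerun the computation from the ``conversely'' part of the proof of Lemma \ref{ig-lemma1}: the generators of $I$ give $\beta^*(x_1) = \sum_{j \geq 1} x_{1,j} t^j$, so $\ord_\beta(x_1) = 1$ and $\beta$ is not the trivial arc, and $\beta^*(x_i) = \sum_{j \geq 1} f_{i,j}(x_{1,1},\dots,x_{1,j})\, t^j = \sum_{j \geq 1} c_{i,j}(\beta^*(x_1))^j = P_i(\beta^*(x_1))$ for $2 \leq i \leq n$, by Definition \ref{defn-of-f} and Notation \ref{mess}. Hence $\ord_\beta(x_i - c_{i,1}x_1 - \cdots - c_{i,q-1}x_1^{q-1}) = \ord_\beta\bigl(\sum_{j \geq q} c_{i,j}(\beta^*(x_1))^j\bigr) \geq q$ for all $q \geq 1$, so by Lemma \ref{local-coords} the arc $\beta$ has strictly positive order along each of the local coordinates $x_1$ and $\frac{x_i - c_{i,1}x_1 - \cdots - c_{i,q-1}x_1^{q-1}}{x_1^{q-1}}$ of $X_{q-1}$ at $p_{q-1}$. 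Thus the unique lift (Lemma \ref{lemma2}) of $\beta$ to $X_{q-1}$ is centered at $p_{q-1}$, that is, $\beta \in \mu_{(q-1)\infty}(\Cont^{\geq 1}(p_{q-1})) = \mu_{q\infty}(\Cont^{\geq 1}(E_q))$; since this holds for every $q$, $\beta \in C$.

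Since this argument is essentially a repackaging of Lemma \ref{ig-lemma1}, I do not anticipate a real obstacle; the only points needing care are the harmless bookkeeping that $\bigcap_{q \geq 1} V(I_q) = V(I)$ (immediate, as $\{I_q\}$ is an ascending chain with union $I$) and the identification, via Remark \ref{point}, of the generic point of $\Spec(S/I)$ with the arc $\beta$ whose Taylor coefficients are read directly off the generators of $I$. As a byproduct the proof yields $\overline{C} = V(I)$, hence that $C$ is irreducible, which will be used in part (1) of Proposition \ref{base-case-cyl2}.
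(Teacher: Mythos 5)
Your argument is correct for the lemma as stated, and its core computation --- reading the arc off the generators of $I$ via Definition \ref{defn-of-f} and Notation \ref{mess}, then using Lemma \ref{local-coords} and Lemma \ref{lemma2} to see that each lift is centered at $p_{q-1}$ --- is the same one the paper uses. The genuine difference is in the reverse inclusion: you verify only that the \emph{generic} point of $V(I)$ lies in $C$, which gives $\overline{C}=V(I)$ and hence the ideal statement, whereas the paper runs the same computation for an \emph{arbitrary} point $\beta$ of $\bigcap_{q\geq 1}V(I_q)$ (writing $\beta^*(x_1)=\sum_{j\geq 1}a_{1,j}t^j$ with coefficients in the residue field, and setting aside the trivial arc, since for a general point $\ord_\beta(x_1)$ need not be $1$ or even finite), thereby proving the stronger pointwise equality $C=V(I)$, i.e.\ that $C$ itself is closed. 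That extra strength is not cosmetic: the images $\mu_{q\infty}(\Cont^{\geq 1}(E_q))$ are not known to be closed (which is why Lemma \ref{ig-lemma1} is phrased for their closures), and the proof of Proposition \ref{base-case-cyl2}(2) deduces the set-theoretic equality $C=\bigcap_{q\geq 1}\Cont^{\geq q}(\mathfrak{a}_q)$ from Lemmas \ref{i-lemma1} and \ref{i-lemma2} precisely by identifying both sets with $V(I)$; with only $\overline{C}=V(I)$ one would conclude $\overline{C}=\bigcap_{q\geq 1}\Cont^{\geq q}(\mathfrak{a}_q)$, not the statement actually used. The repair costs nothing: your computation goes through verbatim for an arbitrary nontrivial point of $\bigcap_{q\geq 1}V(I_q)$ once you replace the use of $\ord_\beta(x_1)=1$ by $1\leq \ord_\beta(x_1)<\infty$ (so that $\ord_\beta\bigl(x_i-c_{i,1}x_1-\cdots-c_{i,q-1}x_1^{q-1}\bigr)\geq q\,\ord_\beta(x_1)\geq \ord_\beta(x_1^{q-1})+1$) and observe that the trivial arc lies in every $\mu_{q\infty}(\Cont^{\geq 1}(E_q))$.
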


\begin{proof}
Since $I$ is a prime ideal, we need to show $C = V(I)$. We have  $$\bigcap_{q \geq 1} V(I_q) = V(\bigcup_{q \geq 1} I_q) = V(I)$$ and $$ C = \bigcap_{q \geq 1} \mu_{q\infty}(\Cont^{\geq 1}(E_q)) \subseteq \bigcap_{q \geq 1} V(I_q)$$ by Lemma \ref{ig-lemma1}. It remains to show $\bigcap_{q \geq 1} \mu_{q\infty}(\Cont^{\geq 1}(E_q)) \supseteq \bigcap_{q \geq 1} V(I_q)$.

Let $\beta: \Spec K[[t]] \to X$ be an arc corresponding to a point in $\bigcap_{q \geq 1} V(I_q)$. We may assume $\beta$ is not the trivial arc, since the trivial arc lies in $\bigcap_{q \geq 1}\mu_{q\infty}(\Cont^{\geq 1}(E_q))$. Say $\beta^*(x_1) = \sum_{j \geq 1} a_{1,j}t^j$, where $a_{1,j} \in K$. Since $I_{q}$ contains $x_{i,j}-f_{i,j}(x_{1, 1}, \ldots, x_{1, j})$ for $1 \leq j \leq q-1$ and $2 \leq i \leq n$, we have that $\beta^*(x_i) = \displaystyle\sum_{j \geq 1}^{\infty}f_{i,j}(a_{1,1}, \ldots, a_{1,j})t^j$ for each $2 \leq i\leq n$. Hence $\beta^*(x_i)=\sum_{j \geq 1}^{\infty} c_{i,j}(\beta^*(x_1))^j$, by Definition \ref{defn-of-f}.
Hence $$\ord_\beta({x_i-c_{i,1}x_1-c_{i,2}x^2_1  \cdots - c_{i,q-1}x^{q-1}_1})=\ord_\beta(\sum_{j\geq q}c_{i,j}x^j_1) = \ord_\beta{x^{q}_1} \geq \ord_\beta(x^{q-1}_1) + 1.$$ In particular, the unique lift of $\beta$ to an arc on $X_{q-1}$ has center $p_{q-1}$, by Lemma \ref{local-coords}. Hence $$\beta \in \mu_{q-1 \infty}(\Cont^{\geq 1}(p_{q-1})) = \mu_{q\infty}(\Cont^{\geq 1}(E_q)).$$ Hence $\bigcap_{q \geq 1}V(I_{q})\subseteq\bigcap_{q \geq 1}\mu_{q\infty}(\Cont^{\geq 1}(E_q))$.
\end{proof}

\begin{lemma}\label{descr-of-a-q}
For a positive integer $q$, let $\mathfrak{a}_q = \{f \in \widehat{\mathcal{O}}_{X, p_0} \mid v(f) \geq q\}$. 
Set $z_i=x_i-\sum_{j=1}^{q-1}c_{i,j}x^j_1$ for $2 \leq i \leq n$.
Then $\mathfrak{a}_q$ is generated (as an ideal in $\widehat{\mathcal{O}}_{X, p_0}$) by $x^q_1, z_2, \ldots, z_n$. 
\end{lemma}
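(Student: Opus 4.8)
The plan is to show that $\mathfrak{a}_q = (x_1^q, z_2, \ldots, z_n)$ by a direct computation using the formula for $v$ provided by Proposition \ref{nonsingular-arc-thm}. Recall from Notation \ref{mess} that for $\psi \in \widehat{\mathcal{O}}_{X, p_0} \simeq \CC[[x_1, \ldots, x_n]]$ we have $v(\psi) = \ord_t \psi(t, P_2(t), \ldots, P_n(t))$, where $P_i(t) = \sum_{j \geq 1} c_{i,j} t^j$. The key observation is that $z_i = x_i - \sum_{j=1}^{q-1} c_{i,j} x_1^j$ satisfies $v(z_i) = \ord_t\bigl(P_i(t) - \sum_{j=1}^{q-1} c_{i,j} t^j\bigr) \geq q$, since the first $q-1$ terms of $P_i$ are cancelled; and clearly $v(x_1^q) = q$. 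Hence the inclusion $(x_1^q, z_2, \ldots, z_n) \subseteq \mathfrak{a}_q$ is immediate from axioms (3) and (4) of Definition \ref{defn-of-valn}.

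For the reverse inclusion, first I would change coordinates: since $x_1, z_2, \ldots, z_n$ are related to $x_1, x_2, \ldots, x_n$ by a unipotent polynomial substitution (each $x_i = z_i + \sum_{j=1}^{q-1} c_{i,j} x_1^j$), they are also a regular system of parameters for $\widehat{\mathcal{O}}_{X, p_0}$, so $\widehat{\mathcal{O}}_{X, p_0} \simeq \CC[[x_1, z_2, \ldots, z_n]]$. Now take any $\psi \in \mathfrak{a}_q$ and write it as a power series in $x_1, z_2, \ldots, z_n$. Separate off the part involving only $x_1$: write $\psi = g(x_1) + \sum_{i=2}^n z_i h_i$ with $g \in \CC[[x_1]]$ and $h_i \in \CC[[x_1, z_2, \ldots, z_n]]$ (this is just the division isomorphism $\CC[[x_1, z_2, \ldots, z_n]]/(z_2, \ldots, z_n) \simeq \CC[[x_1]]$, exactly as in the proof of Lemma \ref{formula}). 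Since each $v(z_i) \geq q$ — wait, more carefully: I need that the "remainder" $g(x_1)$ lies in $(x_1^q)$. Using $v(z_i h_i) \geq v(z_i) \geq q$ and $v(\psi) \geq q$, and the fact that $v(g(x_1)) = (\ord_{x_1} g) \cdot v(x_1) = \ord_{x_1} g$ (Lemma \ref{formula}, using $v(x_1)=1$), the ultrametric inequality gives $\ord_{x_1} g = v(g) \geq \min\{v(\psi), \min_i v(z_i h_i)\} \geq q$ — except that the ultrametric inequality goes the wrong way for this. The correct argument: $g(x_1) = \psi - \sum z_i h_i$, so $v(g) \geq \min\{v(\psi), v(z_2 h_2), \ldots, v(z_n h_n)\} \geq q$; hence $\ord_{x_1} g \geq q$, i.e. $g \in (x_1^q)$. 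Therefore $\psi = g(x_1) + \sum_i z_i h_i \in (x_1^q, z_2, \ldots, z_n)$, completing the proof.

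The main point requiring care — and the only place a subtlety could hide — is the claim $v(z_i) \geq q$, which relies on the precise cancellation of the first $q-1$ coefficients in the substitution; here one must be sure that subtracting $\sum_{j=1}^{q-1} c_{i,j} x_1^j$ (with the $c_{i,j}$ of Notation \ref{mess}) really kills exactly the low-order terms of $P_i(x_1)$, and this is immediate from the defining formula $P_i(t) = \sum_j c_{i,j} t^j$ together with Proposition \ref{nonsingular-arc-thm}. Everything else is the same formal manipulation of $\ZZZ$-valued valuations used in Lemma \ref{formula}, so I expect no genuine obstacle beyond bookkeeping.
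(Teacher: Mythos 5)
Your proposal is correct and follows essentially the same route as the paper: both verify $v(x_1^q), v(z_i)\geq q$ via the formula $v(\psi)=\ord_t\psi(t,P_2(t),\ldots,P_n(t))$, decompose an arbitrary $f\in\mathfrak{a}_q$ as $f=g(x_1)+\sum_{i=2}^n z_i h_i$ using $\CC[[x_1,\ldots,x_n]]/(z_2,\ldots,z_n)\simeq\CC[[x_1]]$, and deduce $v(g)\geq q$, hence $x_1^q\mid g$. Your self-corrected ultrametric step ($g=f-\sum z_ih_i$, so $v(g)\geq\min\{v(f),v(z_ih_i)\}\geq q$) is exactly the estimate the paper uses, so there is no gap.
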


\begin{proof}
By Equation \ref{calc-valn}, we have $v(x^q_1), v(z_i) \geq q$ for $2 \leq i \leq n$. Suppose $f \in \mathfrak{a}_q$. Since $\CC[[x_1, \ldots, x_n]]/(z_2, \ldots, z_n) \simeq \CC[[x_1]]$, we can write $f = \sum_{i \geq 2}^{i=n}h_iz_i + g(x_1)$, where $h_i \in \CC[[x_1, \ldots, x_n]]$ and $g(x_1) \in \CC[[x_1]]$. Then since $v(f)\geq q$, and $v(z_i)\geq q$, we must have $v(g)\geq q$. By Equation \ref{calc-valn}, we conclude $x^q_1$ divides $g(x_1)$ in $\CC[[x_1]]$. Hence $f$ is in the ideal generated by $x^q_1, z_2, \ldots, z_n$.
\end{proof}

\begin{lemma}\label{aq-iq}
For every positive integer $q$, the ideal of $\Cont^{\geq q}(\mathfrak{a}_q)$ in $S$ is $I_q$.
\end{lemma}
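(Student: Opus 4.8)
The plan is to compute the ideal of $\Cont^{\geq q}(\mathfrak{a}_q)$ in $S$ directly from the universal arc over $\Spec S$, using the explicit generators of $\mathfrak{a}_q$ provided by Lemma \ref{descr-of-a-q}. Recall from that lemma that $\mathfrak{a}_q$ is generated by $x_1^q$ together with $z_i := x_i - \sum_{j=1}^{q-1} c_{i,j}x_1^j$ for $2 \le i \le n$. Since $\ord_\gamma(\mathfrak{a}_q) = \min_{f \in \mathfrak{a}_q}\ord_\gamma(f)$ and, for any $f = \sum_\ell h_\ell g_\ell$ with $g_\ell$ generators and $h_\ell \in \widehat{\mathcal{O}}_{X,p_0}$, one has $\ord_\gamma(f) \ge \min_\ell(\ord_\gamma(h_\ell) + \ord_\gamma(g_\ell)) \ge \min_\ell \ord_\gamma(g_\ell)$, the condition $\ord_\gamma(\mathfrak{a}_q) \ge q$ on an arc $\gamma$ centered at $p_0$ is equivalent to $\ord_\gamma(x_1^q) \ge q$ together with $\ord_\gamma(z_i) \ge q$ for $2 \le i \le n$. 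The first of these is automatic, because $\gamma(o)=p_0$ forces $\ord_\gamma(x_1)\ge 1$, hence $\ord_\gamma(x_1^q)=q\,\ord_\gamma(x_1)\ge q$; in particular the generator $x_1^q$ imposes no equations.

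Next I would evaluate $\gamma^*(z_i)$ on the universal arc, i.e. $x_i \mapsto \sum_{j\ge 1} x_{i,j}t^j \in S[[t]]$. Writing $\gamma^*(z_i) = \sum_{k\ge 1} x_{i,k}t^k - \sum_{j=1}^{q-1} c_{i,j}\bigl(\sum_{l\ge 1}x_{1,l}t^l\bigr)^j$, the coefficient of $t^0$ vanishes identically, and for $1\le k\le q-1$ the coefficient of $t^k$ equals $x_{i,k} - f_{i,k}(x_{1,1},\dots,x_{1,k})$. Here one uses that the $t^k$-coefficient of $\bigl(\sum_l x_{1,l}t^l\bigr)^j$ involves only $x_{1,1},\dots,x_{1,k}$ and vanishes for $j>k$, so truncating the sum at $j=q-1$ does not change the $t^k$-coefficient when $k\le q-1$; that coefficient then agrees with $f_{i,k}$ by Definition \ref{defn-of-f}. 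Hence the condition $\ord_t\gamma^*(z_i)\ge q$ is exactly the vanishing of $x_{i,k}-f_{i,k}(x_{1,1},\dots,x_{1,k})$ for $1\le k\le q-1$.

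Combining these over $2 \le i \le n$, the ideal of $\Cont^{\geq q}(\mathfrak{a}_q)$ in $S$ is generated by the coefficients of $t^0,\dots,t^{q-1}$ of $\gamma^*(f)$ as $f$ ranges over $\mathfrak{a}_q$; the above shows these generators include all $x_{i,j}-f_{i,j}(x_{1,1},\dots,x_{1,j})$ with $2\le i\le n$, $1\le j\le q-1$, so the ideal contains $I_q$. For the reverse containment, note that modulo $I_q$ every generator $x_1^q, z_2,\dots,z_n$ of $\mathfrak{a}_q$ already has order $\ge q$ in $t$ along the universal arc, hence so does every element of $\mathfrak{a}_q$, so the coefficients of $t^0,\dots,t^{q-1}$ of $\gamma^*(f)$ lie in $I_q$ for all $f\in\mathfrak{a}_q$. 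Therefore the ideal of $\Cont^{\geq q}(\mathfrak{a}_q)$ in $S$ is exactly $I_q$; since $I_q$ is prime (Definition \ref{Iq}) there is no discrepancy with the reduced structure.

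The main point requiring care is the power-series bookkeeping in the second step — matching the $t^k$-coefficient of $\gamma^*(z_i)$ with the polynomial $f_{i,k}$ and confirming that it suffices to test the finitely many generators of $\mathfrak{a}_q$, so that one obtains the ideal $I_q$ on the nose rather than merely a radical. Everything else is the same kind of coefficient comparison already carried out in the proofs of Lemmas \ref{ig-lemma1} and \ref{i-lemma1}.
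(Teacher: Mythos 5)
Your proof is correct and follows essentially the same route as the paper's: both reduce the contact condition to the generators $x_1^q, z_2, \ldots, z_n$ of $\mathfrak{a}_q$ supplied by Lemma \ref{descr-of-a-q} and then match the $t^k$-coefficients of $\gamma^*(z_i)$ for $k \le q-1$ with the polynomials $f_{i,k}$ of Definition \ref{defn-of-f}. The only difference is one of packaging: you run the coefficient comparison on the universal arc over $\Spec S$ and obtain the ideal $I_q$ on the nose, whereas the paper establishes the two inclusions by evaluating on generic points of $\Cont^{\geq q}(\mathfrak{a}_q)$ and of $V(I_q)$ and invokes the primality of $I_q$.
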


\begin{proof}
First we show $\Cont^{\geq q}(\mathfrak{a}_q) \subseteq V(I_q)$. Suppose $\beta:\Spec K[[t]] \to X$ is an arc corresponding (via Remark \ref{point}) to a generic point of $\Cont^{\geq q}(\mathfrak{a}_q)$. Write $\beta^*(x_i) = \x_{i,1}t + \x_{i,2}t^2 + \cdots$ for $1 \leq i\leq n$, where $\x_{i, j} \in K$ denotes the image in $K$ of $x_{i,j} \in S$. Since $\mathfrak{a}_q$ is generated by $x^q_1, z_2, \ldots, z_n$ (Lemma \ref{descr-of-a-q}) (recall that $z_i=x_i-\sum_{j=1}^{q-1}c_{i,j}x^j_1$ for $2 \leq i \leq n$), we have 
\begin{equation}\label{big}
\x_{i,1}t + \x_{i,2}t^2 + \cdots - \sum _{j=1}^{j=q-1}c_{i,j}(\x_{1,1}t + \x_{1,2}t^2 + \cdots )^j \in (t^q).
\end{equation}
The coefficient of $t^j$ in Equation \ref{big} is $\x_{i,j} - f_{i,j}(\x_{1,1}, \ldots, \x_{1,j})$. Hence $\beta$ corresponds to a prime ideal of $S$ containing the ideal $I_q$ of $S$ generated by  $x_{i,j} - f_{i,j}(x_{1,1}, \ldots, x_{1,j})$ for $2 \leq i \leq n$ and $1 \leq j \leq q-1$. Thus $\Cont^{\geq q}(\mathfrak{a}_q) \subseteq V(I_q)$.

Conversely, suppose $\beta:\Spec K[[t]] \to X$ corresponds (via Remark \ref{point}) to the generic point of 
$V(I)$. The coordinate ring of $V(I_q)$ is $ S/I_q = \CC[\{x_{1,j}\}_{j \geq 1},\{x_{i,j}\}_{2 \leq i \leq n, q \leq j}]$ (Definition \ref{Iq}). Hence $K$, the residue field at the generic point of $V(I_q)$, equals $ K = \CC(\{x_{1,j}\}_{j \geq 1},\{x_{i,j}\}_{2 \leq i \leq n, q \leq j})$. We have $\beta^*(x_1)= x_{1,1}t + x_{1,2}t^2 + \cdots \in K[[t]]$. Since $I_q$ contains $x_{i,j}-f_{i,j}(x_{1, 1}, \ldots, x_{1, j})$ for $1 \leq j \leq q-1$ and $2 \leq i \leq n$, we have that $\beta^*(x_i) = \displaystyle\sum_{j \geq 1}^{q-1}f_{i,j}(x_{1,1}, \ldots, x_{1,j})t^j + t^qr_i(t)$ for some $r_i(t) \in K[[t]]$ and for each $2 \leq i\leq n$. Since $\sum_{j \geq 1}c_{i,j}(x_{1,1}t + x_{1,2}t^2 + \cdots)^j = \sum_{j \geq 1}f_{i,j}(x_{1,1}, \dots, x_{1,j})t^j$ for $2 \leq i \leq n$ (Notation \ref{mess}),  we have that $\beta^*$ maps $x_i-c_{i,1}x_1-c_{i,2}x^2_1  \cdots - c_{i,q-1}x^{q-1}_1$ into the ideal $(t^q) \subseteq K[[t]]$. Hence by Lemma \ref{descr-of-a-q}, we have $\beta \in \Cont^{\geq q}(\mathfrak{a}_q)$. So $V(I_q) = \overline{\{\beta\}} \subseteq \Cont^{\geq q}(\mathfrak{a}_q)$.

\end{proof}

\begin{lemma}\label{i-lemma2}
The ideal of  $\bigcap_{q \geq 1}\Cont^{\geq q}(\mathfrak{a}_q)$ in $S$ is $I$. (Note: $S$ is defined in Equation \ref{S}, and $I$ is defined in Definition \ref{I}, and $\mathfrak{a}_q$ is defined in Proposition \ref{base-case-cyl2} (2).)
\end{lemma}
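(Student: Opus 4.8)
The plan is to mimic the proof of Lemma \ref{i-lemma1}, replacing its input Lemma \ref{ig-lemma1} by Lemma \ref{aq-iq}. Since $I$ is a prime ideal of $S$ (Definition \ref{I}), it suffices to establish the set-theoretic equality $\bigcap_{q \geq 1}\Cont^{\geq q}(\mathfrak{a}_q) = V(I)$ inside $\Spec S = \pi^{-1}(p_0)$; the convention that closed subsets carry the reduced structure then forces the defining ideal to be $I$, because a prime ideal is radical.

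First I would record that, by Lemma \ref{aq-iq}, for each $q \geq 1$ we have $\Cont^{\geq q}(\mathfrak{a}_q) = V(I_q)$ as closed subsets of $\Spec S$. Next I would observe that the family $\{I_q\}_{q \geq 1}$ is an increasing chain of ideals: the generators of $I_q$ exhibited in Definition \ref{Iq} (namely $x_{i,j} - f_{i,j}(x_{1,1},\ldots,x_{1,j})$ for $2 \leq i \leq n$ and $1 \leq j \leq q-1$) form a subset of those of $I_{q+1}$, so $I_q \subseteq I_{q+1}$, and hence $\sum_{q \geq 1} I_q = \bigcup_{q \geq 1} I_q = I$ by Definition \ref{I}. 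Combining these, and using the standard fact that intersecting closed subschemes corresponds to summing defining ideals, gives $\bigcap_{q \geq 1}\Cont^{\geq q}(\mathfrak{a}_q) = \bigcap_{q \geq 1} V(I_q) = V\!\bigl(\sum_{q \geq 1} I_q\bigr) = V(I)$, as desired.

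There is essentially no obstacle here: the statement is a formal consequence of Lemma \ref{aq-iq}, the chain structure of the $I_q$, and the primality of $I$. The only point worth a sentence of care is the passage from the finite intersections $V(I_q)$ to the infinite intersection and the identification of its defining ideal; this is clean precisely because the $I_q$ increase, so no saturation or closure issues intervene, and $I = \bigcup_{q \geq 1} I_q$ is already prime, so one reads off the ideal of the reduced subscheme $V(I)$ directly as $I$.
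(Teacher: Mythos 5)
Your proposal is correct and follows essentially the same route as the paper: both reduce to the set-theoretic identity $\bigcap_{q \geq 1}\Cont^{\geq q}(\mathfrak{a}_q)=\bigcap_{q\geq 1}V(I_q)=V(\bigcup_{q\geq 1}I_q)=V(I)$ via Lemma \ref{aq-iq} and the primality of $I$. Your extra remark that the $I_q$ form an increasing chain is a harmless elaboration of what the paper uses implicitly when it writes $V(\bigcup_{q\geq 1}I_q)$.
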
 

\begin{proof}
Since $I$ is a prime ideal, it is enough to show $\bigcap_{q \geq 1}\Cont^{\geq q}(\mathfrak{a}_q) = V(I)$. 
By Lemma \ref{aq-iq}, we have $$\bigcap_{q \geq 1}\Cont^{\geq q}(\mathfrak{a}_q)=\bigcap_{q \geq 1} V(I_q) = V(\bigcup_{q \geq 1} I_q) = V(I).$$
\end{proof}

We now finish the proof of Proposition \ref{base-case-cyl2}.

\begin{proof}[Proof of Proposition \ref{base-case-cyl2}]
Since $S/I \simeq \CC[\{x_{1,j}\}_{j \geq 1}]$ is a domain, the ideal $I$ is a prime ideal. By Lemma \ref{i-lemma1}, the ideal of $C$ is $I$. Hence $C$ is irreducible. We  have $C=\bigcap_{q}\Cont^{\geq q}(\mathfrak{a}_q)$ because by Lemmas \ref{i-lemma1} and \ref{i-lemma2}, their ideals are the same. 

It remains to show $\val_C = v$. Let $\gamma:\Spec \CC[[t]] \to X$ be the arc centered at $p_0$ with $\gamma^*(x_1)= t$ and $\gamma^*(x_i) = P_i(t)$ for $2 \leq i \leq n$. Then $\gamma \in C$ since the ideal in $S$ corresponding to $\gamma$, namely the ideal generated by $x_{1,0}$, $x_{1,1}-1$, $x_{1,m}$, $x_{i,0}$, and $x_{i,j}-c_{i,j}$ for $m \geq 2$, $2 \leq i \leq n$, and $j \geq 1$ contains $I$. Hence for any $f \in \mathcal{O}_{X, p_0}$, we have $\val_C(f) \leq \ord_\gamma(f) = v(f)$.

For the reverse inequality, first suppose $f \in \mathcal{O}_{X, p_0}$ is such that $s:=v(f) < \infty$. Let $\gamma \in C$ be such that $\val_C(f)=\ord_\gamma(f)$.  Since $f \in \mathfrak{a}_s$ and $\gamma \in \Cont^{\geq s}(\mathfrak{a}_s)$, we have $\ord_\gamma(f) \geq s$, i.e. $\val_C(f) \geq v(f)$. 

Next suppose $v(f) = \infty$. Set $\phi_i = x_i-P_i(x_1)$ for $2 \leq i\leq n$. Since $$\CC[[x_1, \ldots, x_n]]/(\phi_2, \ldots, \phi_n) \simeq \CC[[x_1]],$$ we can write $f=\displaystyle \sum_{i=2}^{n}\phi_i{h_i} + g(x_1)$ for  $h_i \in \CC[[x_1, \ldots, x_n]]$ and $g \in \CC[[x_1]]$. Since $v(f)=\infty$, we have $g=0$ by Equation \ref{calc-valn}. Let $\gamma \in C$, and 
write $\gamma^*(x_1) = \sum_{j \geq 1} a_jt^j$. Since $x_{i,j} - f_{i,j}(x_{1,1}, \ldots, x_{1,j}) \in I$ for $2 \leq i \leq n$ and $j \geq 1$, we have $\gamma^*(x_i)= \sum_{j \geq 1}f_{i,j}(a_1, \ldots, a_j)t^j = \sum_{j\geq 1} c_{i,j}(a_1t + a_2t^2 + \ldots)^j = p_i(\gamma^*(x_1)) = \gamma^*(p_i(x_1))$. Hence $\gamma^*(\phi_i)=0$, and so $\gamma^*(f)=\gamma^*(\displaystyle \sum_{i=2}^{n}\phi_ih_i)=0$. So $\ord_\gamma(f) = \infty$. Since $\gamma \in C$ was arbitrary, we have $\val_C(f)=\infty$, as desired. 
\end{proof}

\subsection{General case}\label{general-section}

\begin{lemma}\label{key-lemma}
Let $X$ be a nonsingular variety of dimension $n$ ($n \geq 2$) over an algebraically closed field $\CC$ of characteristic zero. Let $\alpha:\Spec \CC[[t]] \to X$ be a normalized arc (Definition \ref{normalized}). Set $p_0=\alpha(o)$. Let $\alpha^*: \widehat{\mathcal{O}}_{X, p_0} \to \CC[[t]]$ be the $\CC$-algebra homomorphism induced by $\alpha$. Suppose $\gamma:\Spec \CC[[t]] \to X$ satisfies $\gamma(o)=p_0$ and $\ker(\alpha^*) \subseteq \ker(\gamma^*)$, where $\gamma^*:\widehat{\mathcal{O}}_{X,p_0} \to \CC[[t]]$ is the $\CC$-algebra homomorphism induced by $\gamma$. Assume $\gamma$ is not the trivial arc (Definition \ref{zero-arcs}).
Then 
\begin{enumerate}
\item  There exists a morphism $h: \Spec \CC[[t]] \to \Spec\CC[[t]]$ such that $\gamma = \alpha \circ h$, i.e. $\gamma$ is a reparametrization of $\alpha$. 
\item $h^*:\CC[[t]] \to \CC[[t]]$ is a local homomorphism.
\item Set $N=\ord_t(h)$. Then $\ord_\gamma=N\ord_\alpha$ on $\widehat{\mathcal{O}}_{X, p_0}$. (We use the convention that $\infty = N \cdot \infty$.)

\end{enumerate}
\end{lemma}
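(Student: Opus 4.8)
The plan is to show that $\gamma$ is a reparametrization of $\alpha$ by exploiting the fact, special to \emph{normalized} arcs, that the normalization ring $\tilde{A_\alpha}$ of Notation \ref{A-algebra} maps isomorphically onto $\CC[[t]]$. Note first that a normalized arc, like any non-trivial arc, is not a zero arc (if $\ker(\alpha^*)=\mathfrak{m}_{p_0}$ then $\ord_\alpha$ takes only the values $0$ and $\infty$, so no normalization condition can be met), so Notation \ref{A-algebra} and Lemmas \ref{hochster-fix1} and \ref{hochster-fix2} apply to both $\alpha$ and $\gamma$. The first step is to upgrade the hypothesis $\ker(\alpha^*)\subseteq\ker(\gamma^*)$ to equality: since $\ker(\alpha^*)\subseteq\ker(\gamma^*)$, the map $\gamma^*$ factors as $\widehat{\mathcal{O}}_{X,p_0}\twoheadrightarrow A_\alpha\xrightarrow{\delta}\CC[[t]]$, and if $\delta$ were not injective its kernel would be a nonzero prime of the one-dimensional domain $A_\alpha$ (Lemma \ref{hochster-fix1}), hence the maximal ideal; then $\gamma^*$ would factor through $A_\alpha/\mathfrak{m}_{A_\alpha}=\CC$, forcing $\gamma$ to be the trivial arc, contrary to hypothesis. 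So $\delta=\overline{\gamma}^*$ is injective and $A_\gamma=A_\alpha$.

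Next I would invoke Lemma \ref{hochster-fix2}: $\overline{\alpha}^*$ extends to an injection $\tilde{A_\alpha}\hookrightarrow\CC[[t]]$ with image $\CC[[\phi]]$, and since $\alpha$ is normalized Proposition \ref{hochster-genius} gives $\ord_t(\phi)=1$, so $\CC[[\phi]]=\CC[[t]]$ and $\overline{\alpha}^*\colon\tilde{A_\alpha}\to\CC[[t]]$ is an isomorphism. By the same reasoning (normality of $\CC[[t]]$) $\overline{\gamma}^*$ extends to an injection $\tilde{A_\alpha}\hookrightarrow\CC[[t]]$. Define $h^*=\overline{\gamma}^*\circ(\overline{\alpha}^*)^{-1}\colon\CC[[t]]\to\CC[[t]]$, a $\CC$-algebra homomorphism, and let $h\colon\Spec\CC[[t]]\to\Spec\CC[[t]]$ be the corresponding morphism. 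With $q\colon\widehat{\mathcal{O}}_{X,p_0}\to A_\alpha$ the quotient and $\iota\colon A_\alpha\hookrightarrow\tilde{A_\alpha}$ the inclusion, one has $\alpha^*=\overline{\alpha}^*\circ\iota\circ q$ and $\gamma^*=\overline{\gamma}^*\circ\iota\circ q$, so $h^*\circ\alpha^*=\overline{\gamma}^*\circ(\overline{\alpha}^*)^{-1}\circ\overline{\alpha}^*\circ\iota\circ q=\overline{\gamma}^*\circ\iota\circ q=\gamma^*$; hence $\gamma=\alpha\circ h$, giving part (1).

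For part (2), I would use the elementary fact that every $\CC$-algebra homomorphism $\CC[[s]]\to\CC[[t]]$ is local: if $s$ mapped to $u\notin(t)$, pick $c\in\CC^{*}$ with $u\equiv c\pmod{(t)}$; then $s-c$ is a unit of $\CC[[s]]$ whereas its image $u-c$ lies in $(t)$ and is not a unit. Applying this to $h^*$ shows $h^*$ is local, and since $h^*$ is injective, $h^*(t)\neq0$, so $N:=\ord_t(h)=\ord_t(h^*(t))\geq1$. For part (3), being a local homomorphism of complete Noetherian local rings, $h^*$ is continuous, so $h^*(\sum_{i\geq m}c_i t^i)=\sum_{i\geq m}c_i\,h^*(t)^i$; since $\ord_t(h^*(t)^i)=Ni$ and $\CC[[t]]$ is a domain, a series of order $m$ is sent to one of order $Nm$ (with $0$ sent to $0$, consistent with $\infty=N\cdot\infty$). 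Therefore, for $f\in\widehat{\mathcal{O}}_{X,p_0}$, $\ord_\gamma(f)=\ord_t(h^*(\alpha^*(f)))=N\,\ord_t(\alpha^*(f))=N\,\ord_\alpha(f)$.

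The substance of the argument is concentrated in the first two steps: recognizing that $\ker(\alpha^*)\subseteq\ker(\gamma^*)$ in fact forces equality (so that $\gamma$ and $\alpha$ trace the same formal curve), and using normalizedness of $\alpha$ precisely to identify $\tilde{A_\alpha}$ with $\CC[[t]]$. The latter is what makes $h^*$ a well-defined ring homomorphism on all of $\CC[[t]]$ rather than merely a map defined on the subring $\overline{\alpha}^*(\tilde{A_\alpha})$; once it is available, all three conclusions follow by routine manipulation.
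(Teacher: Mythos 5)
Your proof is correct and follows essentially the same route as the paper's (Hochster's) argument: force $\ker(\alpha^*)=\ker(\gamma^*)$ via one-dimensionality of $A_\alpha$, use normalizedness and Lemma \ref{hochster-fix2} to identify $\tilde{A_\alpha}\xrightarrow{\ \overline{\alpha}^*\ }\CC[[\phi]]=\CC[[t]]$, and define $h^*=\overline{\gamma}^*\circ(\overline{\alpha}^*)^{-1}$. The only difference is cosmetic: you quote the standard fact that $\ord_t(\phi)=1$ implies $\CC[[\phi]]=\CC[[t]]$ (which the paper proves by an explicit coefficient induction) and you give a slightly more self-contained argument that $h^*$ is local and order-multiplying, all of which is fine.
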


\begin{proof}{(Due to Mel Hochster.)}
We use Notation \ref{A-algebra}. Suppose $\gamma$ is not the trivial arc. By Lemma \ref{hochster-fix1}, $A_\gamma$ has dimension one, and so $\ker(\gamma^*)$ is a prime ideal of height $n-1$. The same is true for $\ker(\alpha^*)$, and so our assumption $\ker(\alpha^*) \subseteq \ker(\gamma^*)$ implies $\ker(\alpha^*)=\ker(\gamma^*)$. Hence $A_\alpha=A_\gamma$. By Lemma \ref{hochster-fix2}, the map $\alpha^*$ (resp. $\gamma^*$) induces an isomorphism $\overline{\alpha^*}: \tilde{A_\alpha} \to \CC[[\phi_\alpha]]$ (resp. $\overline{\gamma^*}: \tilde{A_\gamma} \to \CC[[\phi_\gamma]]$) for some $\phi_\alpha \in \CC[[t]]$ (resp. $\phi_\gamma \in \CC[[t]])$. Since $\alpha$ is normalized, we have $\ord_t(\phi_\alpha)=1$ by Proposition \ref{hochster-genius}. 

I claim that the inclusion $\CC[[\phi_\alpha]] \subseteq \CC[[t]]$ is actually an equality. It suffices to find $a_j \in \CC$ such that $t = \sum_{j \geq 1}a_j(\phi_\alpha)^j$. Suppose $\phi_\alpha = \sum_{j \geq 1} b_jt^j$, where $b_j \in \CC$ and $b_1 \neq 0$. We proceed to define $a_j$ by induction on $j$. Set $a_1 = {b_1}^{-1}$. Suppose $a_1, \ldots, a_{d-1}$ have been specified, for some $d \geq 2$. The coefficient of $t^{d}$ in $\sum_{j \geq 1}a_j(\phi_\alpha)^j$ is $a_db_1^d + Q_d(a_1, \ldots, a_{d-1}, b_1, \ldots, b_d)$ for some polynomial $Q_d$. We require this coefficient to be $0$ since we want $t = \sum_{j \geq 1}a_j(\phi_\alpha)^j$. We can solve the equation $$a_db_1^d + Q_d(a_1, \ldots, a_{d-1}, b_1, \ldots, b_d)=0$$ for $a_d$ since $b_1 \neq 0$. This completes the induction, and we have $t = \sum_{j \geq 1}a_j(\phi_\alpha)^j$.

Let $h:\Spec \CC[[t]] \to \Spec \CC[[t]]$ be induced by the $\CC$-algebra homomorphism $h^*:\CC[[t]] \to \CC[[t]]$ defined by the composition $$\CC[[t]] = \CC[[\phi_\alpha]] \xrightarrow{(\overline{\alpha^*})^{-1}}\tilde{A}_\alpha = \tilde{A}_\gamma \xrightarrow{\overline{\gamma^*}} \CC[[\phi_\gamma]] \subseteq \CC[[t]].$$ 
The last inclusion is an inclusion of local $\CC$-algebras and all other maps are isomorphisms. Hence $h^*$ is a local homomorphism. For $f \in \widehat{\mathcal{O}}_{X,p_0}$, we have $\gamma^*(f)=\overline{\gamma^*}(f) =  h^* \circ \overline{\alpha^*}(f)= h^* \circ \alpha^*(f)$, and hence $\gamma = \alpha \circ h$.
If $\ord_t(h)=N$ and $a=\ord_\alpha(f)$, then the order of $t$ in $\gamma^*(f)=h^* \circ \alpha^*(f)$ is $Na$, i.e. $\ord_\gamma(f)=N\ord_\alpha(f)$.
\end{proof}

\begin{notation}\label{little-k}
We denote by $(X_\infty)_0$ the subset of points of $X_\infty$ with residue field equal to $\CC$. If $D \subseteq X_\infty$, then we set $D_0 = D \cap (X_\infty)_0$.
\end{notation}

Here is the main theorem of this paper. 

\begin{theorem}\label{the-thm}
Let $X$ be a nonsingular variety of dimension $n$ ($n \geq 2$) over a field $\CC$. Let $\alpha:\Spec \CC[[t]] \to X$ be a normalized arc (Definition \ref{normalized}). Set $p_0=\alpha(o)$ and $v = \ord_\alpha$. Let $E_i$ and $p_i$ be the sequence of divisors and centers, respectively, of $v$  (described in Definition \ref{divisor-limit}). Let $\mu_q:X_q \to X$ be the composition of the first $q$ blowups of centers of $v$. Let 
\begin{equation}\label{C}
C = \bigcap_{q>0} \mu_{q \infty}(\Cont^{\geq 1}(E_q))\subseteq X_\infty.
\end{equation}

Let $\mathfrak{a}_q = \{f \in \widehat{\mathcal{O}}_{X, p_0} \mid v(f) \geq q\}$. Let $$C''=\bigcap_{q \geq 1} \Cont^{\geq q}(\mathfrak{a}_q) \subseteq X_\infty.$$

Set $C(v)=\overline{\{\gamma \in X_\infty \mid \ord_\gamma = v, \ \gamma(o)=p\}} \subseteq X_\infty.$

Let $Y=\overline{\alpha(\eta)}$ where $\eta$ is the generic point of $\Spec \CC[[t]]$. Let $\pi:X_\infty \to X$ be the canonical morphism sending an arc $\gamma \in X_\infty$ to its center $\gamma(o) \in X$.

Let $R=\{a \circ h \in X_\infty \mid h:\Spec \CC[[t]] \to \Spec \CC[[t]]\}$, where $h$ is a morphism of $\CC$-schemes. 

Then 
\begin{enumerate}

\item \label{irreducible} $C$ is an irreducible subset of $X_\infty$ and $\val_C = v$.

\item \label{ideals-thm} Assume $\CC$ is algebraically closed and has characteristic zero. The following closed subsets of $(X_\infty)_0$ are equal (we use Notation \ref{little-k}):
$$C(v)_0=C_0 = {C''}_0=(\pi_X^{-1}(\alpha(o)) \cap Y_\infty)_0 =R.$$


\end{enumerate}
\end{theorem}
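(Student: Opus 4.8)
The plan is to reduce to the nonsingular case settled in Proposition \ref{base-case-cyl2} by desingularizing $\alpha$ via Proposition \ref{val-type}, and then to recognize the five sets using the reparametrization Lemma \ref{key-lemma}.

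\emph{Part (1).} Choose, by Proposition \ref{val-type}, an $r\ge 0$ so that the lift $\alpha_r$ of $\alpha$ to $X_r$ is a nonsingular arc. Applying Proposition \ref{base-case-cyl2} to $\alpha_r$ on $X_r$ yields an irreducible $D\subseteq(X_r)_\infty$ with $\val_D=\ord_{\alpha_r}$; from the proof, $D=V(I)=\overline{\{\beta_r\}}$ for an explicit arc $\beta_r$, and its $\CC$-points are precisely the reparametrizations of $\alpha_r$. The key claim is $C=\overline{\mu_{r\infty}(D)}$. Writing $\mu_q=\mu_{q-1}\circ(\text{blowup of }p_{q-1})$ and using Lemma \ref{lemma2}(3) gives $\mu_{q\infty}(\Cont^{\ge1}(E_q))=\mu_{(q-1)\infty}(\Cont^{\ge1}(p_{q-1}))$, so $C$ is the set of arcs on $X$ that, if nontrivial, lift to $X_q$ with center $p_q$ for all $q$. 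By uniqueness of lifts (Lemma \ref{lemma2}(1)), such an arc lifts to an arc on $X_r$ centered at $p_r$, and the remaining conditions are exactly membership in $D$; since $\mu_{r\infty}$ is continuous and $D$ is irreducible, this identifies $C$ with $\overline{\{\mu_r\circ\beta_r\}}$ (the trivial arc at $p_0$ lies in $\overline{\{\mu_r\circ\beta_r\}}$ because it is the $\mu_{r\infty}$-image of the trivial arc at $p_r$, which lies in $D=V(I)$ since the generators of $I$ have no constant term). Hence $C$ is irreducible, and Proposition \ref{gen-pt-cyl} gives, for $f\in\mathcal O_{X,p_0}$, $\val_C(f)=\ord_{\mu_r\circ\beta_r}(f)=\ord_{\beta_r}(\mu_r^*f)=\ord_{\alpha_r}(\mu_r^*f)=v(f)$.

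\emph{Part (2).} For a $\CC$-arc $\gamma$ centered at $p_0$, Lemma \ref{key-lemma} identifies the conditions ``$\gamma\in C''$'', ``$\ker\alpha^*\subseteq\ker\gamma^*$'', and ``$\gamma\in Y_\infty$'': if $\ker\alpha^*\subseteq\ker\gamma^*$ and $\gamma$ is nontrivial then $\gamma=\alpha\circ h$ and $\ord_\gamma=N\ord_\alpha\ge v$, so $\gamma\in\Cont^{\ge q}(\mathfrak a_q)$ for every $q$; the remaining implications and the trivial arc are immediate, and $Y_\infty\cap\pi_X^{-1}(p_0)$ is the set of $\CC$-arcs at $p_0$ killing $\ker\alpha^*$. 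By Lemma \ref{key-lemma} these sets all equal $R$ (nontrivial members are reparametrizations $\alpha\circ h$, and conversely each $\alpha\circ h$ is centered at $p_0$ and factors through $Y$). For $C(v)$: if $\ord_\gamma=v$ and $\gamma(o)=p_0$ then $\ker\gamma^*=\ker\alpha^*$, which has height $n-1\ge1$, so $\gamma$ is nontrivial and $\gamma=\alpha\circ h$ with $N\ord_\alpha=\ord_\alpha$, i.e.\ $h\in\operatorname{Aut}\CC[[t]]$; thus $C(v)=\overline{\{\alpha\circ h\mid h\in\operatorname{Aut}\CC[[t]]\}}$, which lies in $Y_\infty\cap\pi_X^{-1}(p_0)$, so $C(v)_0\subseteq R$. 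For $R\subseteq C(v)$, degenerate: the morphism $\mathbb A^1\to X_\infty$ sending $s$ to $\alpha\circ h_s$, with $h_s(t)=st+h(t)$, carries automorphisms ($s\ne0$) to the given reparametrization $\alpha\circ h$ ($s=0$) when $\ord_t h\ge2$, and the trivial arc is handled similarly. Finally $C_0=R$: by Part (1), $\mu_{r\infty}(D_0)=\{\alpha\circ h\}=R\subseteq C_0$, and $C=\overline{\mu_{r\infty}(D)}\subseteq Y_\infty\cap\pi_X^{-1}(p_0)$ (the $\CC$-points are dense in $D$ because $\CC$ is algebraically closed and $S/I$ is a polynomial ring), so $C_0\subseteq R$. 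Assembling the identifications proves the theorem.

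\emph{Expected main obstacle.} The heart of the argument is the identity $C=\overline{\mu_{r\infty}(D)}$: it demands carefully transporting the contact-locus picture up the tower $X\leftarrow X_1\leftarrow\cdots\leftarrow X_r$ through the uniqueness of arc-lifts under blowups, with particular care for trivial arcs (whose lifts are not unique) and for the fact that $\mu_{r\infty}$ is only continuous, not a closed immersion. The point-set issues on the infinite-dimensional $X_\infty$ (density of $\CC$-points, closedness of the various loci) are routine given that $\CC$ is algebraically closed.
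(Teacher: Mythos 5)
Your proposal follows the paper's strategy quite closely, and I believe it is essentially correct. Part (1) is the paper's argument verbatim in outline: choose $r$ by Proposition \ref{val-type}, apply Proposition \ref{base-case-cyl2} on $X_r$, and push forward by $\mu_{r\infty}$; the paper simply asserts $C=\mu_{r\infty}(C')$ (your $D$) with no more justification than your sketch, so your extra attention to lifts, trivial arcs, and the closure is a refinement rather than a deviation --- though note that for Part (1) all you need are the two containments $\mu_{r\infty}(D)\subseteq C\subseteq \overline{\{\mu_r\circ\beta_r\}}$, which is safer to state than the equality $C=\overline{\mu_{r\infty}(D)}$, since neither you nor the paper addresses whether $C$ itself is closed in $X_\infty$. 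In Part (2) the skeleton (Lemma \ref{key-lemma} identifying the $\CC$-points of $C''$, of $\pi^{-1}(p_0)\cap Y_\infty$, and $R$) is the same, but you close the cycle differently: the paper gets $C_0\subseteq C(v)_0$ for free because the generic point $\beta$ of $C$ satisfies $\ord_\beta=v$ and $\beta(o)=p_0$ by Part (1), hence $C\subseteq\overline{\{\beta\}}\subseteq C(v)$, and it gets ${C''}_0\subseteq C_0$ by noting a reparametrization has the same sequence of centers as $\alpha$; you instead prove $R\subseteq C(v)_0$ by the explicit degeneration $h_s(t)=st+h(t)$ and $C_0\subseteq R$ via the ideal-theoretic description of $D$ and density of $\CC$-points. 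Your route is correct and more concrete (it exhibits every reparametrization, including the trivial arc, as a limit of automorphism reparametrizations), at the cost of more work than the paper's generic-point trick. One small repair: your equality $C(v)=\overline{\{\alpha\circ h \mid h\in\operatorname{Aut}\CC[[t]]\}}$ is obtained by applying Lemma \ref{key-lemma}, which is stated only for $\CC$-arcs, to every point of the defining set of $C(v)$; that set contains non-$\CC$-valued points (e.g.\ the generic point of $C$). What you actually use, namely $C(v)\subseteq \pi^{-1}(p_0)\cap Y_\infty$, holds for arcs over any field directly from $\ker\alpha^*\subseteq\ker\gamma^*$ (as in the paper's treatment of $C(v)\subseteq C''$), so phrase that step accordingly.
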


\

\

\begin{proof}[Proof of Theorem \ref{the-thm}.]
(Part \ref{irreducible}) Let $r$ be a nonnegative integer such that the lift of $\alpha$ to $X_r$ is a nonsingular arc. For $q >r$, let $\mu_{q, r}:X_q \to X_r$ be the composition of the blowups along the centers of $v$, starting at $X_{r+1} \to X_r$ and ending at the blowup $X_q \to X_{q-1}$. Let $$C'=\displaystyle \bigcap_{q > r}\mu_{q, r \infty}(\Cont^{\geq 1}(E_q)) \subseteq (X_r)_\infty.$$ Note that $$C = \mu_{r \infty}(C') \subseteq X_\infty.$$

By Proposition \ref{base-case-cyl2}, $C'$ is irreducible. Hence $C$ is irreducible. Since the generic point of $C'$ maps to the generic point of $C$, we have that $\val_{C'}=\val_C$, i.e. $\val_{C'}(\mu^*_r(f))=\val_C(f)$ for $f \in \mathcal{O}_{X,p_0}$. Since $v = \val_{C'}$ by Proposition \ref{base-case-cyl2}, we conclude $v = \val_C$.

(Part \ref{ideals-thm}) We show $C(v)_0 \subseteq {C''}_0 \subseteq C_0 \subseteq C(v)_0$. Separately we will establish $C''=\pi^{-1}(p_0)\cap Y_\infty$.

First we check $C(v) \subseteq C''$. If $\gamma \in X_\infty$ is such that $\gamma(o)=p$ and $\ord_\gamma = v$, then $\gamma \in \Cont^{\geq q}(\mathfrak{a}_q)$ for every $q \geq 1$, and so $\gamma \subseteq C''$. Since $C''$ is closed, we have $C(v) \subseteq C''.$

Now we show ${C''}_0 \subseteq C_0$. Let $\gamma \in {C''}_0$, and assume without loss of generality that $\gamma$ is not the trivial arc. We claim that $\ker(\alpha^*) \subseteq \ker(\gamma^*)$. Let $f \in \ker(\alpha^*)$. Then $v(f)=\infty$, and so $f \in \mathfrak{a}_q$ for every $q \in \mathbb{Z}_{\geq 0}$. Hence $\ord_\gamma(f) \geq q$ for all $q \in \mathbb{Z}_{\geq 0}$. Therefore $\ord_\gamma(f)=\infty$, so $f \in \ker(\gamma^*)$. By Lemma \ref{key-lemma} there exists $h: \Spec \CC[[t]] \to \CC[[t]]$ such that $\gamma = \alpha \circ h$. It follows that $\gamma$ has the same sequence of centers as $\alpha$. Indeed, if $\gamma_q: \Spec \CC[[t]] \to X_q$ is the unique lift of $\gamma$ to an arc on $X_q$, then $\gamma_q \circ h$ is the unique lift of $\alpha$ to an arc on $X_q$. Since $h^*$ is a local homomorphism, we have that $h$ maps the closed point of $\Spec \CC[[t]]$ to the closed point of $\Spec \CC[[t]]$. Hence the center of $\gamma_q$ is the same as the center of $\gamma_g \circ h$. We conclude $\gamma \in C$. Note that this argument also shows ${C''}_0 \subseteq R$, and Lemma \ref{key-lemma} shows that ${C''}_0 \subseteq R$.

To see that $C \subseteq C(v)$, let $\beta$ be the generic point of $C$. Note that $\ord_\beta=v$ and $\pi(\beta)=p_0$, and so $\beta \in C(v)$. Hence $C \subseteq C(v)$. 

Now we show ${C''}_0=(\pi^{-1}(p_0)\cap Y_\infty)_0$. 
Let $J$ be the kernel of the map $\alpha^*: \widehat{\mathcal{O}}_{X,p_0} \to \Spec \CC[[t]]$. If $f \in J$, then $\ord_\alpha=\infty$ and hence $f \in \mathfrak{a}_q$ for every $q \geq 1$. Let $\gamma \in {C''}_0$. Since $\mathfrak{a}_1$ is the maximal ideal of $\widehat{\mathcal{O}}_{X, p_0}$, we have $\gamma(o)=p_0$, i.e. $\gamma \in \pi^{-1}(p_0)$. Also, since $\ord_\gamma(f) \geq q$ for every $q \geq 1$, we have $\ord_\gamma(f) = \infty$. Hence $\gamma \in (\pi^{-1}(p_0)\cap Y_\infty)_0$.

For the reverse inclusion ${C''}_0 \supseteq (\pi^{-1}(p_0)\cap Y_\infty)_0$, let $\gamma \in (\pi^{-1}(p_0)\cap Y_\infty)_0$. Then $J \subseteq \ker(\gamma^*)$, and hence by Lemma \ref{key-lemma} we have that either $\gamma$ is the trivial arc or $\ord_\gamma=N\ord_\alpha$ for some positive integer $N$. In both cases we have $\gamma \in {C''}_0$.

\end{proof}

\begin{remark}
If $X$ is a surface and if $v$ is a divisorial valuation, then the set $$C = \bigcap_{q>0} {\mu_{q \infty}(\Cont^{\geq 1}(E_q))}$$ equals the cylinder associated to $v$ in \cite[Example 2.5]{mustata}, namely ${\mu_{r \infty}(\Cont^{\geq 1}(E_r))}$, where $r$ is such that $p_r$ is a divisor. 
\end{remark}

\begin{proof}
 If $r$ is such that $p_r \in X_r$ (Definition \ref{divisor-limit}) is a divisor, then $C={\mu_{r \infty}(\Cont^{\geq 1}(E_r))}$ since ${\mu_{q \infty}(\Cont^{\geq 1}(E_q))} \supseteq {\mu_{q+1 \infty}(\Cont^{\geq 1}(E_{q+1}))}$, and for $q>r$ we have equality since the maps $\mu_{q, r}$ are isomorphisms. Hence $C={\mu_{r \infty}(\Cont^{\geq 1}(E_r))}$, which is the set in \cite[Example 2.5]{mustata}.
\end{proof}

\end{document}